\newtheorem{thm}{Theorem}[section]
\newtheorem{cor}[thm]{Corollary}
\newtheorem{lemma}[thm]{Lemma}
\newtheorem{prop}[thm]{Proposition}
\newtheorem{rmk}[thm]{Remark}
\newtheorem{obs}[thm]{Observation}
\newtheorem{example}[thm]{Example}
\theoremstyle{definition}
\newtheorem{defi}[thm]{Definition}
\DeclareMathOperator{\Gr}{Gr}
\DeclareMathOperator{\cl}{cl}
\DeclareMathOperator{\convex}{convex}
\begin{document}

\title{Positroid envelopes and graphic positroids}
\author{Jeremy Quail and Puck Rombach}
\address{Dept.\:of Mathematics \& Statistics \\ University of Vermont \\ Burlington, VT, USA}
\email{$\{\mbox{jeremy.quail}, \mbox{puck.rombach}\}$@uvm.edu}
\date{January 26, 2025}

\maketitle

\begin{abstract}
Positroids are matroids realizable by real matrices with all nonnegative maximal minors. They partition the ordered matroids into equivalence classes, called positroid envelope classes, by their Grassmann necklaces. We give an explicit graph construction that shows that every positroid envelope class contains a graphic matroid. We {prove} that a graphic positroid is the unique matroid in its positroid envelope class. Finally, we show that every graphic positroid has an oriented graph representable by a signed incidence matrix with all nonnegative minors.
\end{abstract}

\section{Introduction}

The Grassmannian $\Gr(k,\mathbb{F}^n)$ admits a stratification by the realization spaces of the rank-$k$ ordered matroids on $n$ elements. The study of this matroid stratification of the Grassmannian has proven difficult as, due to Mn\"ev's Universality Theorem, a matroid stratum can be as pathological as any algebraic variety. The totally nonnegative Grassmannian $\Gr^{\geq 0}(k,\mathbb{R}^n)$, consisting of the points in $\Gr(k,\mathbb{R}^n)$ that can be realized by matrices with all nonnegative maximal minors, has a stratification due to Postnikov into positroid cells that are each homeomorphic to an open ball~\cite{postnikov2006total}. 

Positroids are in bijection with a number of interesting combinatorial objects~\cite{postnikov2006total}, and have connections to scattering amplitudes in $\mathcal{N}=4$ supersymmetric Yang-Mills theory~\cite{arkani2012scattering}, tilings of the amplituhedron~\cite{parisi2021m}, and cluster algebras~\cite{scott2006grassmannians}. 
Postnikov's positroid stratification of the nonnegative Grassmannian yields a partition of the ordered matroids into classes with positroid representatives, {where these representatives} are called the positroid envelopes~\cite{knutson2013positroid}. We study the interplay between the family of graphic matroids and these positroid envelopes. We show that every positroid is the envelope of a graphic matroid. We {prove that} $P$ is a graphic positroid {if and only if} $P$ is the unique matroid contained in its positroid envelope class. 

The rest of this paper is organized as follows. In Section~\ref{sec:prelims}, we provide the necessary definitions and tools. In Section~\ref{sec:graph-constr}, we prove that every positroid envelope class contains at least one graphic matroid, by explicitly constructing a planar graph whose cycle matroid lies in the desired positroid envelope {class}. In Section~\ref{sec:graphic-pos}, we characterize the graphic positroids and their envelope classes. In Section~\ref{sec:incidence}, we strengthen the connection between the graphic positroids and their graphs, by showing that every graphic positroid $P$ can be realized by a matrix with all nonnegative maximal minors that is the signed incidence matrix of a graph that represents $P$.

\section{Basic tools and definitions}\label{sec:prelims}
In this section, we establish basic definitions and tools that are used throughout the paper. In~\ref{sec:matroidbasics}, we establish the basics for matroids. In~\ref{sec:positroidbasics}, we give necessary background on positroids. In~\ref{sec:decpermbasics}, we describe some combinatorial objects that are in bijection to the positroids: decorated permutations and Grassmann necklaces. These are used in the proofs of main results. In~\ref{sec:basics-envelopes}, we give the basic definitions of positroid envelopes and varieties.

\subsection{Matroids}\label{sec:matroidbasics}
A \emph{matroid} $M = (E,\mathcal{B})$ consists of a finite ground-set $E$ and a non-empty collection of subsets $\mathcal{B} \subset 2^E$, called bases, that satisfy the following \emph{basis exchange property}: 
\begin{itemize}
    \item if $B_1,B_2 \in \mathcal{B}$ and $x \in B_1 \setminus B_2$, then there exists $y \in B_2 \setminus B_1$ such that {$(B_1 \setminus \{x\}) \cup \{y\} \in \mathcal{B}$}.
\end{itemize}
The collection of \emph{independent sets} of $M$ is $\mathcal{I}(M) := \{ I \subseteq B: B \in \mathcal{B}(M) \}$, and the \emph{dependent sets} of $M$ are all subsets of $E(M)$ that are not independent. Minimal dependent sets of $M$ are called \emph{circuits}, {and we will denote the set of circuits of $M$ by $\mathcal{C}(M)$.} {An element of $M$ is called a \emph{loop} if it comprises a circuit consisting of a single element.} The \emph{dual} of a matroid $M$ is the matroid $M^*$ whose ground-set is $E(M^*) := E(M)$ and whose bases are {$\mathcal{B}(M^*) := \{ E(M) \setminus B : B \in \mathcal{B}(M) \}$}. A \emph{cocircuit} of $M$ is a circuit in $M^*$, and a \emph{coloop} of $M$ is a loop in $M^*$. The \emph{rank function} of a matroid $M$ is a map $r_M: 2^{E(M)} \to \mathbb{Z}_{\geq 0}$ given by, for all $X \subseteq E(M)$, $r_M(X) := \max \{ \lvert I \rvert : I \subseteq X, I \in \mathcal{I}(M) \}$. The \emph{closure operator} of a matroid $M$ is a function $\cl : 2^E \to 2^E$ given by, for all $X \subseteq E(M)$, $\cl(X) := \{e \in E(M) : r_M(X \cup e) = r_M(X) \}$. A \emph{matroid isomorphism} between two matroids $M$ and $N$ is a basis-preserving bijection {between $E(M)$ and $E(N)$}. 

Let $M$ be a matroid and $x \in E(M)$. The matroid obtained by \emph{deleting $x$ from $M$}, {denoted} $M \setminus x$, has ground-set $E(M \setminus x) := E(M) \setminus \{x\}$ and independent sets $\mathcal{I}(M \setminus x) := \{ I : x \notin I \in \mathcal{I}(M) \}$. The matroid obtained by \emph{contracting $x$ in $M$}, $M/x$, has ground-set $E(M/x) := E(M) \setminus \{x\}$ and bases
\begin{equation*}
    \mathcal{B}(M/x) :=
    \begin{cases}
        \{ B : B \in \mathcal{B}(M) \}, & \text{if } x \text{ is a loop}\\
        \{ B \setminus \{x\} : x \in B \in \mathcal{B}(M) \}, & \text{otherwise.}
    \end{cases}
\end{equation*}
We use the corresponding notation $(M/X)\setminus Y$, for $X,Y\subseteq E(M)$ and $X\cap Y =\emptyset$, to mean the contraction of the set of elements $X$ and {the} deletion of the set of elements $Y$. Deletion and contraction are dual to each other, in the sense that $M \setminus x = (M^*/x)^*$ and $M/x = (M^* \setminus x)^*$. We say that a matroid $N$ is a \emph{minor} of $M$ if it can be obtained from $M$ by a sequence of deletions and contractions. We say that a matroid $M$ is \emph{$N$-free} if $N$ is not {isomorphic to} a minor of $M$. {A class of matroids $\mathcal{F}$ is called \emph{minor-closed} if for any matroid $M$ in $\mathcal{F}$ every minor of $M$ is also contained in $\mathcal{F}$. Any minor-closed family $\mathcal{F}$ can be characterized by a set $S$, called the \emph{forbidden minors of $\mathcal{F}$}, of matroids not contained in $\mathcal{F}$ such that no matroid in $\mathcal{F}$ has a matroid in $S$ as a minor.}

Let $A$ be a matrix with entries over a field $\mathbb{F}$. We can obtain a matroid, {denoted} $M(A)$, from {the matrix} $A$ by taking the ground-set to be the column vectors of $A$, and the bases to be the collection of maximal linearly independent subsets of the column vectors of $A$. We say that $A$ is a matrix that realizes the matroid $M(A)$. If there is an isomorphism between a matroid $M$ and $M(A)$, then we also say that $A$ realizes $M$. A matroid is called \emph{$\mathbb{F}$-linear}, or \emph{$\mathbb{F}$-realizable}, if it can be realized by a matrix with entries over $\mathbb{F}$. If a matroid $M$ is $\mathbb{F}$-linear, then so to is $M^*$. A matroid is called \emph{binary} if it is $\mathbb{F}_2${-linear and is called} \emph{regular} if it is $\mathbb{F}$-linear for all fields $\mathbb{F}$. For any field $\mathbb{F}$, the class of $\mathbb{F}$-linear matroids is minor-closed {(Proposition 3.2.4 in} \cite{oxley2006matroid}), thus can be characterized by a (possibly infinite) set of forbidden minors. For example, binary matroids have a simple characterization {in terms of uniform matroids. The \emph{uniform matroid} $U^k_n$ is the matroid on $[n] = \{1,2,\dots,n\}$ whose bases are all $k$-element subsets of $[n]$.}

\begin{thm}[Theorem 6.5.4 in~\cite{oxley2006matroid}] \label{forbid:binary}
    A matroid is binary if and only if it {is} $U^2_4${-free}.
\end{thm}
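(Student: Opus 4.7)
The plan is to handle the two directions separately. The forward direction is immediate: $U^2_4$ cannot be realized over $\mathbb{F}_2$, since that would require four pairwise linearly independent vectors in $\mathbb{F}_2^2$, yet $\mathbb{F}_2^2$ contains only three nonzero vectors. Since binary matroids form a minor-closed class (as noted in the excerpt), no binary matroid can have $U^2_4$ as a minor.

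For the reverse direction I would follow Tutte's classical approach, routed through Whitney's criterion: $M$ is binary if and only if $|C \cap C^*|$ is even for every circuit $C$ and every cocircuit $C^*$ of $M$. I would first prove this criterion. The ``only if'' direction is a short computation over $\mathbb{F}_2$: fixing a binary representation $A$, the circuits correspond to minimal supports of vectors in $\ker A$, the cocircuits to minimal supports of vectors in the row space of $A$, and the $\mathbb{F}_2$ inner product of any such pair vanishes, forcing their supports to meet evenly. The ``if'' direction is constructive: fix a basis $B$ of $M$, form a matrix with an identity block on $B$, and assign to each element $e \notin B$ the characteristic $\mathbb{F}_2$-vector of $C(e,B) \cap B$, where $C(e,B)$ denotes the fundamental circuit of $e$ with respect to $B$. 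The even-intersection hypothesis is precisely what is needed to verify that the circuits of this matrix agree with the circuits of $M$.

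Granted Whitney's criterion, one proves the contrapositive of the hard implication. Suppose $M$ is not binary; then some circuit $C$ and cocircuit $C^*$ satisfy $|C \cap C^*|$ odd, and by the general matroid circuit–cocircuit orthogonality $|C \cap C^*| \neq 1$, so $|C \cap C^*| \geq 3$. To extract a $U^2_4$ minor, pick three elements in $C \cap C^*$ together with a suitable fourth element of $C \cup C^*$, and take a sequence of deletions and contractions off these four points (contracting a maximal independent subset of the remaining part of $C \cup C^*$ and deleting the rest) to descend to a four-element rank-two minor. The main obstacle is ensuring that this minor is genuinely $U^2_4$ rather than a rank-two matroid on four elements that contains a loop or a parallel pair; the standard workaround is to choose the witnessing pair $(C, C^*)$ so as to minimize $|C \cup C^*|$ among all pairs with $|C \cap C^*|$ odd, and then to use the minimality to rule out loops and parallel pairs in the resulting minor so that, by classification of simple rank-two matroids on four elements, it must be $U^2_4$.
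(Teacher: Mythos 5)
This is a classical background result (Tutte's excluded-minor characterization of binary matroids), which the paper simply cites from Oxley without proof; there is therefore no in-paper argument to compare yours against. Judged on its own, your easy direction is complete and correct: $U^2_4$ would require four pairwise linearly independent vectors in $\mathbb{F}_2^2$, which has only three nonzero vectors, and binary matroids form a minor-closed class. Your plan for the converse---route through Whitney's even-intersection criterion (binary if and only if $\lvert C \cap C^* \rvert$ is even for every circuit $C$ and cocircuit $C^*$) and then extract a $U^2_4$ minor from an odd intersection---is a legitimate, well-trodden path, essentially the chain of equivalences Oxley collects in his Chapter 9 rather than the direct argument he gives for Theorem 6.5.4 itself.

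However, the two steps that carry all the mathematical weight are asserted rather than proved, so this should be read as an outline with genuine gaps. First, in the converse direction of Whitney's criterion, verifying that the fundamental-circuit incidence matrix $A$ over $\mathbb{F}_2$ satisfies $M = M[A]$ is not the routine check your sentence suggests; one must show that every circuit of $M$ is dependent for $A$ \emph{and} that every minimal dependent set of $A$ is dependent in $M$, and the usual argument passes through the intermediate characterization that symmetric differences of circuits are disjoint unions of circuits. Second, the extraction of a $U^2_4$ minor from a pair $(C,C^*)$ with $\lvert C \cap C^* \rvert$ odd (hence at least $3$, by orthogonality) is precisely the crux of the theorem. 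You correctly identify the obstruction---the four-element rank-two minor could a priori contain a loop or a parallel pair---but the minimality argument that rules this out is only gestured at, and the choice of the ``suitable fourth element'' and of which elements to contract versus delete is exactly where such a proof succeeds or fails. Until those two steps are written out, the hard implication remains unproved.
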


A \emph{graph} $G = (V,E)$ on $n$ edges consists of a vertex set $V$ and an edge set $E$. Edges have either two vertex endpoints or one, in which case they are called loops. We allow multiple edges to have the same endpoints. We always take $E$ to have a total order. Given a graph $G$, the collection of maximal forests of $G$ give the bases of a matroid on $E$, which we denote as $M(G)$. We say that $G$ represents a matroid $M$ if there is an isomorphism between $M$ and $M(G)$. A \emph{graphic matroid} $M$ is a matroid that can be represented by some graph. Every graphic matroid is regular. 

\subsubsection{Connectivity, direct sums and $2$-sums}

Let $M$ be a matroid and $X \subseteq E(M)$. The \emph{connectivity function of} $M$ {is a map} $\lambda_M{: 2^{E(M)} \to \mathbb{Z}_{\geq 0}}$ defined as
\begin{equation*}
    \lambda_M(X) := r_M(X) + r_M(E \setminus X) - r_M(M).
\end{equation*}
A $k$\emph{-{separation} of} $M$ is a pair $(X,E\setminus X)$ for which $\min\{ \lvert X \rvert, \lvert E \setminus X \rvert \} \geq k$ and $\lambda_M(X) < k$. For $n \geq 2$, a matroid $M$ is $n$\emph{-connected} if, for all $k \in [n-1]$, $M$ has no {$k$-separation}. We call a matroid $M$ \emph{disconnected} if it is not $2$-connected.

Let $M$ and $N$ be matroids on disjoint {ground-}sets. The \emph{direct sum} of $M$ and $N$ is the matroid {$M \oplus N$} whose ground-set is $E(M \oplus N) := E(M) \cup E(N)$ and whose bases are $\mathcal{B}(M \oplus N) := \{ B \cup B' : B \in \mathcal{B}(M), B' \in \mathcal{B}(N) \}$.

A matroid is disconnected if and only if it is the direct sum of two non-empty matroids. Every matroid $M$ decomposes under direct sums into a unique collection of non-empty $2$-connected matroids which we call the \emph{$2$-connected components} (or \emph{connected components}) of $M$. We denote the number of $2$-connected components of a matroid $M$ by $c(M)$.

Let $M$ and $N$ be matroids such that 
    \begin{itemize}
        \item[(i)] $\lvert E(M) \rvert, \lvert E(N) \rvert \geq 2$,
        \item[(ii)] $E(M) \cap E(N) = \{ e \}$, and
        \item[(iii)] neither $(e, E(M) \setminus \{ e\})$ nor $(e, E(N) \setminus \{e\})$ is a $1$-separation of $M$ or $N$ respectively.
    \end{itemize}
Then the \emph{$2$-sum} of $M$ and $N$ is the matroid {$M \oplus_2 N$} on $E(M \oplus_2 N) = (E(M) \cup E(N)) \setminus \{e\}$ whose circuits are 
    \begin{align*}
        \mathcal{C}(M \setminus e)& \cup \mathcal{C}(N \setminus e) \\
        &\cup \{(C_1 \cup C_2) \setminus \{e\} : C_1 \in \mathcal{C}(M), C_2 \in \mathcal{C}(N), e \in C_1 \cap C_2\}.
    \end{align*}

From a matroid {$M \oplus_2 N$}, we can obtain the matroids {$M$} and {$N$} as minors by the following result.

\begin{prop}[Proposition 7.1.21 in~\cite{oxley2006matroid}] \label{prop:2-sum-minor}
    Both $M$ and $N$ are isomorphic to minors of $M \oplus_2 N$. In particular, if {$e'$} is an element of {$E(N) \setminus e$} that is in the same component of $N$ as {$e$}, then there is a minor of $M \oplus_2 N$ from which $M$ can be obtained by relabeling {$e'$} by {$e$}. Moreover, if $N$ has at least three elements, then $M$ isomorphic to a proper minor of $M \oplus_2 N$. 
\end{prop}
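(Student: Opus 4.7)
The plan is to produce $M$ explicitly as a minor of $M \oplus_2 N$ by performing deletions and contractions only on elements of $E(N) \setminus \{e, e'\}$, leaving $E(M) \setminus \{e\}$ and $e'$ intact, so that $e'$ ends up playing the role of $e$. The starting point is that condition (iii) of the $2$-sum definition forbids $e$ from being a loop or coloop of $N$, so $e$ lies in a $2$-connected component of $N$ of size at least two. Choosing $e'$ in this component and invoking the standard fact that any two elements of a $2$-connected matroid lie in a common circuit, one obtains a circuit $C_N$ of $N$ containing both $e$ and $e'$.

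Setting $X := E(N) \setminus C_N$ and $Y := C_N \setminus \{e, e'\}$, I would consider $N' := (N \setminus X)/Y$. Because $C_N$ is a circuit in $N \setminus X = N|C_N$ and $Y$ is a proper subset of this circuit, hence independent, contracting $Y$ produces a matroid on $\{e, e'\}$ whose unique circuit is $\{e, e'\}$ itself, i.e., a parallel pair. A direct computation from the $2$-sum circuit description then shows that $M \oplus_2 N'$ has as its circuits precisely the circuits of $M$ with $e$ relabeled to $e'$, so $M \oplus_2 N' \cong M$ via the bijection swapping $e$ and $e'$.

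What remains, and what I expect to be the main obstacle, is the compatibility identity
\[
(M \oplus_2 N) \setminus X / Y \;=\; M \oplus_2 N'.
\]
I would verify this by comparing the three families of circuits in the $2$-sum definition: circuits of $M \setminus e$ persist unchanged; the deletion $\setminus X$ removes those circuits of $N$ not contained in $C_N$; and the contraction $/Y$ collapses the surviving $N$-circuits through $e$ onto the single circuit $\{e, e'\}$, matching both the $N' \setminus e$ circuits and the mixed circuits prescribed by $2$-summing with the parallel pair. One also needs to check that conditions (i)--(iii) of the $2$-sum hold for the pair $(M, N')$, which is immediate since $\{e, e'\}$ is a circuit of $N'$. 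The first assertion of the proposition then follows by applying the same construction with the roles of $M$ and $N$ exchanged, and the ``moreover'' clause follows because $|E(N)| \geq 3$ forces $X \cup Y = E(N) \setminus \{e, e'\}$ to be non-empty, so the constructed minor is proper.
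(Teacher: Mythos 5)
Your argument is correct and is essentially the standard one: the paper does not prove this proposition but simply cites it from Oxley (Proposition 7.1.21), and Oxley's proof proceeds exactly as you propose, by taking a circuit $C_N$ of $N$ through $e$ and $e'$, deleting $E(N)\setminus C_N$, and contracting $C_N\setminus\{e,e'\}$ so that the $N$-side collapses to a parallel pair on $\{e,e'\}$. The only step left implicit in your sketch is the routine check that the sets $C\setminus Y$ arising from the contraction are minimal (so that they really are the circuits of the minor), which goes through without difficulty.
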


\subsubsection{Tree decompositions}

Every matroid $M$ decomposes under direct sums and $2$-sums into a {(not necessarily unique)} collection of non-empty $3$-connected matroids which we call \emph{$3$-connected components} of $M$. Such a decomposition can be represented by a vertex- and edge-labeled tree.

\begin{defi}[\cite{oxley2006matroid}]
    A \emph{matroid-labeled tree} is a tree $T$ with vertex set $\{M_1, M_2, \ldots, M_k\}$ for some positive integer $k$ such that
    \begin{itemize}
        \item[(i)] each $M_i$ is a matroid;
        \item[(ii)] if $M_i$ and $M_j$ are joined by an edge $e$ of $T$, then $E(M_i) \cap E(M_j) = \{e\}$, where $e$ is not a {$1$-separation} of $M_i$ or $M_j$; and
        \item[(iii)] if $M_i$ and $M_j$ are non-adjacent, then $E(M_i) \cap E(M_j) = \emptyset$.
    \end{itemize}
\end{defi}

Let $T$ be a matroid-labeled tree and let {$e$ be an edge of $T$} with endpoints $M_i$ and $M_j$. Then the tree $T/e$, obtained from $T$ by contracting the edge $e$ and labeling with $M_i \oplus_2 M_j$ the new vertex that arises from identifying $M_i$ and $M_j$, is a matroid-labeled tree.

Let $e$ be an edge in some matroid-labeled tree $T$ with endpoints $M_i$ and $M_j$. A \emph{relabeling move} of $e$ consists of relabeling {the edge $e$ in $T$ by some $f$, as well as relabeling the corresponding element $e$ in both $E(M_i)$ and $E(M_j)$ by the same $f$}. We say that two matroid-labeled trees are \emph{equal to within relabeling of their edges} if one can be obtained from the other by a sequence of relabeling moves.

\begin{defi}[\cite{oxley2006matroid}]\label{def:canontree}
    A \emph{tree decomposition} of a $2$-connected matroid $M$ is a matroid-labeled tree $T$ such that if $V(T) = \{M_1, M_2, \ldots, M_k\}$ and {the edge-set of $T$ is} $\{e_1, e_2, \ldots, e_{k-1}\}$, then
    \begin{itemize}
        \item[(i)] $E(M) = (E(M_1) \cup E(M_2) \cup \cdots \cup E(M_k)) \setminus \{e_1,e_2, \ldots, e_{k-1}\}$;
        \item[(ii)] $\lvert E(M_i) \rvert \geq 3$ for all $i$ unless $\lvert E(M) \rvert < 3$, in which case $k=1$ and $M_1 = M$; and
        \item[(iii)] $M$ is the matroid that labels the single vertex of $T/\{e_1,\ldots,e_{k-1}\}$.
    \end{itemize}
\end{defi}

\begin{thm}[Theorem 8.3.10 in~\cite{oxley2006matroid}] \label{thm:canon-tree}
    Let $M$ be a $2$-connected matroid. Then $M$ has a tree decomposition $T$ in which every vertex label is $3$-connected, a circuit, or a cocircuit, and there are no two adjacent vertices that are both labeled by circuits or both labeled by cocircuits. Moreover, $T$ is unique to within relabeling of its edges.
\end{thm}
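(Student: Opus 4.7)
The plan is to prove existence constructively by induction on $|E(M)|$, and then establish uniqueness by characterizing the tree in terms of the $2$-separations of $M$.

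For existence, I would induct on $|E(M)|$. If $M$ is $3$-connected or $|E(M)| \leq 2$, then $T$ consists of a single vertex labeled by $M$; if $|E(M)| = 3$ and $M$ is $2$-connected but not $3$-connected, then $M$ is either a $3$-element circuit or a $3$-element cocircuit, and $T$ is again a single vertex. Otherwise $M$ has a $2$-separation $(X, E\setminus X)$ with $|X|,|E\setminus X|\geq 2$. By a standard rank-function computation, such a separation is equivalent to a decomposition $M = M_1 \oplus_2 M_2$ in which each $M_i$ is $2$-connected on strictly fewer elements, sharing a single new element $e$. Applying the inductive hypothesis to each $M_i$ and gluing the resulting trees $T_1, T_2$ along a new edge labeled $e$ yields a matroid-labeled tree that satisfies conditions (i)--(iii) of Definition~\ref{def:canontree}. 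Finally, whenever two adjacent vertices are labeled by circuits $C_1, C_2$ meeting in the edge $e$, the $2$-sum $C_1 \oplus_2 C_2$ is itself a circuit of size $|C_1|+|C_2|-2$, and contracting the edge $e$ and relabeling the merged vertex produces an equivalent tree. Iterating this cleanup step (and the dual for cocircuits) terminates and enforces the no-two-adjacent-circuits and no-two-adjacent-cocircuits conditions.

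For uniqueness up to edge relabeling, I would show that the internal edges of $T$ correspond bijectively to a canonical family of $2$-separations of $M$, and that the vertex labels are determined by the equivalence classes these separations induce on $E(M)$. Each $3$-connected vertex corresponds to a rigid subpiece of $M$ admitting no further $2$-sum refinement, whereas circuit and cocircuit vertices absorb all of the flexible $2$-separations inside them, which could otherwise be split in multiple equivalent ways. Given two tree decompositions $T, T'$ meeting the stated conditions, the plan is to match the $2$-separations they display and check that any discrepancy can be resolved by relabeling the $2$-sum edges.

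The main obstacle is the uniqueness claim. Existence reduces to an inductive $2$-sum decomposition followed by a local merging step, but uniqueness demands a careful analysis of how $2$-separations of $M$ can cross, and of exactly when two crossing $2$-separations within a circuit or cocircuit should be consolidated into a single vertex rather than split into two adjacent circuit or cocircuit vertices. Making this precise is the content of the Cunningham--Edmonds--Seymour decomposition theorem, and the delicate part is showing that, once adjacent circuits and cocircuits are merged, no further ambiguity remains apart from relabeling the edges used as $2$-sum identifiers.
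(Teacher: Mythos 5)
The paper does not actually prove this statement: it is imported verbatim as Theorem 8.3.10 from Oxley's book (the Cunningham--Edmonds decomposition theorem), so there is no in-paper argument to compare against; your proposal has to be judged against the standard literature proof. Your existence sketch is essentially that standard argument and is fine at this level of detail: split along an arbitrary $2$-separation into two $2$-connected $2$-sum factors (this does require the fact that a $2$-separation of a $2$-connected matroid induces a $2$-sum decomposition with $2$-connected parts, which you correctly flag as a rank computation), recurse, glue the two trees along a new edge, and then iteratively contract edges joining two circuit-labeled or two cocircuit-labeled vertices, using that the $2$-sum of two circuits sharing an element is again a circuit, and dually for cocircuits. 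The merging step terminates because it strictly decreases the number of vertices, and your base cases are right: a $2$-connected matroid on at most three elements is $U^1_2$, $U^2_3$, or $U^1_3$, hence a circuit or a cocircuit.

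The genuine gap is uniqueness, and you have named it rather than closed it. ``Match the $2$-separations the two trees display and check that any discrepancy can be resolved by relabeling'' is a restatement of the claim, not an argument. The content of the theorem is precisely that (a) every $2$-separation of $M$ is displayed either by an edge of any tree satisfying the stated conditions or within a single circuit or cocircuit vertex of it, and (b) the collection of displayed separations determines the tree up to edge relabeling. Point (a) requires an analysis of how an arbitrary $2$-separation of $M$ crosses the separations used to build a given tree; the reason the no-two-adjacent-same-type condition is needed is exactly that crossing $2$-separations occur only inside maximal circuit or cocircuit parts, so those parts must be fully merged before the tree becomes canonical. Without this crossing analysis, the induction underlying your existence proof gives no control over the output: different initial choices of $2$-separation a priori yield different trees, and nothing in the proposal shows they coincide after cleanup. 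So the proposal establishes existence modulo routine details but does not establish the ``moreover'' clause.
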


For a given $2$-connected matroid $M$, we call the tree decomposition given by Theorem~\ref{thm:canon-tree} the \emph{canonical tree decomposition} for $M$. The reader is directed to Section 8.3 in~\cite{oxley2006matroid} for more details on matroid-labeled trees and tree decompositions of $2$-connected matroids.
 
\subsubsection{Series-parallel matroids}

Let $M$ and $N$ be matroids such that $E(M) \cap E(N) = \{e\}$ and $e$ is not a {$1$-separation} of $M$ or $N$. The \emph{series-connection} of $M$ and $N$ is the matroid $S(M,N)$ whose ground-set is $E(S(M,N)) := E(M) \cup E(N)$ and whose set of circuits is
\begin{align*}
    \mathcal{C}(S(M,N)) := \mathcal{C}(M \setminus e) \cup \mathcal{C}(N \setminus e) \cup \{C_1 \cup C_2 : e& \in C_1 \in \mathcal{C}(M),\\ e& \in C_2 \in \mathcal{C}(N)\}.
\end{align*}
The \emph{parallel connection} of $M$ and $N$ is the matroid $P(M,N)$ whose ground-set is $E(P(M,N)) := E(M) \cup E(N)$ and whose set of circuits is
\begin{align*}
    \mathcal{C}(P(M,N)) := \mathcal{C}(M) \cup \mathcal{C}(N) \cup \{(C_1  \cup C_2) \setminus \{e\}) : e& \in C_1 \in \mathcal{C}(M),\\ e& \in C_2 \in \mathcal{C}(N)\}.
\end{align*}
{If $E(M) \cap E(N) = \{e\}$ and $e$ is a $1$-separation of $M$ or $N$, then $e$ is a loop or coloop of $M$ or $N$.}
If $e$ is a loop of $M${,} then {we define the series-parallel connections of $M$ and $N$ as}
\begin{align*}
    P(M,N) &= P(N,M) = M \oplus (N/e), \text{ and}\\
    S(M,N) &= S(N,M) = (M/e) \oplus N.
\end{align*}
If $e$ is a coloop of $M${,} then
\begin{align*}
    P(M,N) &= P(N,M) = (M \setminus e) \oplus N, \text{ and}\\
    S(M,N) &= S(N,M) = M \oplus (N \setminus e).
\end{align*}

We say that $N$ is a \emph{series extension} of $M$ if $N \cong S(M,U^1_2)$ and we say that $N$ is a \emph{parallel extension} of $M$ if $N \cong P(M,U^1_2)$. A matroid $M$ is a \emph{series-parallel matroid} if $M$ can be obtained from $U^0_1$ or $U^1_1$ by a sequence of series and parallel extensions.

\subsubsection{Weak maps}

Let $M$ and $N$ be matroids. A \emph{weak map} from $M$ to $N$ is a bijection $\varphi: E(M) \to E(N)$ such that {for all} $I \in \mathcal{I}(N)$, $\varphi^{-1}[I] \in \mathcal{I}(M)$. We say that a weak map $\varphi: E(M) \to E(N)$ is \emph{rank-preserving} if $r_M(M) = r_N(N)$. A rank-preserving weak map between two matroids implies the following connectivity result.

\begin{prop} \label{n-connect}
    Let $M$ and $N$ be matroids {on $E$} such that $\mathbb{1} : M \to N$ is a rank-preserving weak map. If $N$ is $n$-connected, then $M$ is $n$-connected.
\end{prop}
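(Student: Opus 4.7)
The plan is to argue the contrapositive via the connectivity function. Suppose $M$ is not $n$-connected; then there is some $k \in [n-1]$ and a subset $X \subseteq E$ with $\min\{|X|, |E \setminus X|\} \geq k$ and $\lambda_M(X) < k$. I will show that $(X, E \setminus X)$ is also a $k$-separation of $N$, contradicting $n$-connectivity of $N$. The main tool is the standard observation that a weak map reverses rank pointwise, together with the rank-preserving hypothesis.

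First I would record that the identity weak map $\mathbb{1}: M \to N$ forces $r_M(Y) \geq r_N(Y)$ for every $Y \subseteq E$. Indeed, fix $Y$ and let $I \subseteq Y$ be independent in $N$ with $|I| = r_N(Y)$. By the definition of a weak map, $\mathbb{1}^{-1}[I] = I \in \mathcal{I}(M)$, so $r_M(Y) \geq |I| = r_N(Y)$. Combined with the rank-preserving hypothesis $r_M(E) = r_N(E)$, this yields
\begin{equation*}
\lambda_M(X) = r_M(X) + r_M(E \setminus X) - r_M(E) \geq r_N(X) + r_N(E \setminus X) - r_N(E) = \lambda_N(X),
\end{equation*}
for every $X \subseteq E$.

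Applying this to the hypothetical $k$-separation $(X, E \setminus X)$ of $M$ gives $\lambda_N(X) \leq \lambda_M(X) < k$, and since $|X|$ and $|E \setminus X|$ are both at least $k$, the same partition is a $k$-separation of $N$ with $k < n$, contradicting that $N$ is $n$-connected. There is no real obstacle here; the only thing one must be careful about is the direction of the rank inequality, which goes the ``obvious'' way only because the weak map condition is stated in terms of inverse images of independent sets, and the rank-preserving assumption is needed to ensure the $-r(E)$ term cancels rather than introducing slack.
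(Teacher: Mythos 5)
Your argument is correct and is essentially the paper's own proof: both establish $r_M(Y)\geq r_N(Y)$ for all $Y\subseteq E$ (the paper cites Proposition 9.1.2(e) of White, you derive it directly from the definition of a weak map) and then conclude $\lambda_M(X)\geq\lambda_N(X)$ using rank-preservation, so any $k$-separation of $M$ transfers to $N$. No differences worth noting beyond your spelling out the steps the paper leaves implicit.
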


\begin{proof}
If $\mathbb{1} : M \to N$ is a weak map, then by Proposition 9.1.2(e) in~\cite{white1986theory}, for every $X \subseteq E$, we have $r_M(X) \geq r_N(X)$. If $\mathbb{1} : M \to N$ is rank-preserving, it follows that $\lambda_M(X)\geq \lambda_N(X)$ for all $X \subseteq E$.
\end{proof}

A rank-$k$ matroid $M$ on $n$ elements can be represented by the \emph{matroid base polytope} $\Gamma_M$, which is a subset of the hypersimplex $\Delta_{n,k}$. We can obtain the number of connected components of $M$ from $n$ and the dimension of $\Gamma_M$.

\begin{defi}
    Let $M$ be a matroid on {$[n]$}. For $I \in \mathcal{I}(M)$, let {$e_I = \sum_{i \in I} e_i$}, where $\{e_1,e_2,\ldots,e_n\} \subset \mathbb{R}^n$ are the standard basis vectors. The \emph{matroid base polytope} of $M$ is
    \begin{equation*}
        \Gamma_M = \convex\{e_B : B \in \mathcal{B}(M)\} \subset \mathbb{R}^n.
    \end{equation*}
\end{defi}

\begin{cor}[Corollary 4.7 in~\cite{borovik1997coxeter}] \label{cor:matroid-poly-dim}
    Let $M$ be a matroid on the set {$[n]$} and {let} $c(M)$ {be} the number of connected components of $M$. Then the dimension of the basis matroid polytope associated with $M$ is given by the formula
    \begin{equation*}
        \dim \Gamma_M = n - c(M).
    \end{equation*}
\end{cor}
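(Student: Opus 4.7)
The plan is to establish the formula by proving matching upper and lower bounds on $\dim \Gamma_M$, reducing to the connected case via the direct sum decomposition.

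For the upper bound, I would observe that if $M = M_1 \oplus M_2 \oplus \cdots \oplus M_c$ is the decomposition into $2$-connected components with ground sets $E_1, \ldots, E_c$, then every basis $B$ of $M$ is of the form $B = B_1 \cup \cdots \cup B_c$ with $B_i \in \mathcal{B}(M_i)$. In particular, $|B \cap E_i| = r_M(E_i) = r_{M_i}(M_i)$ for each $i$. Translating to the indicator vectors, every $e_B$ with $B \in \mathcal{B}(M)$ satisfies the $c$ affine equations
\begin{equation*}
    \sum_{j \in E_i} x_j = r_M(E_i), \qquad i = 1, \ldots, c.
\end{equation*}
Since the sets $E_1, \ldots, E_c$ partition $[n]$, these equations are linearly independent, and so $\Gamma_M$ is contained in an affine subspace of $\mathbb{R}^n$ of dimension exactly $n-c$.

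For the lower bound, I would first note that the decomposition $M = M_1 \oplus \cdots \oplus M_c$ yields $\Gamma_M = \Gamma_{M_1} \times \Gamma_{M_2} \times \cdots \times \Gamma_{M_c}$ inside $\mathbb{R}^n = \mathbb{R}^{E_1} \times \cdots \times \mathbb{R}^{E_c}$, so it suffices to prove that whenever $N$ is a connected matroid on $m \geq 1$ elements, $\dim \Gamma_N = m-1$. This is the heart of the argument. To exhibit $m-1$ linearly independent directions tangent to $\Gamma_N$ inside the hyperplane $\sum x_j = r(N)$, I would build a graph $G$ on the ground set of $N$ by joining $x$ to $y$ whenever there exist bases $B, B' \in \mathcal{B}(N)$ with $B' = (B \setminus \{x\}) \cup \{y\}$; every such pair contributes the edge vector $e_{B'} - e_B = e_y - e_x$ as a direction realized by $\Gamma_N$. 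If $G$ has a spanning tree on the ground set, the corresponding $m-1$ vectors $e_y - e_x$ are linearly independent, giving $\dim \Gamma_N \geq m-1$.

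The main obstacle is then proving that $G$ is connected. For this I would invoke the standard characterization that in a connected matroid, any two elements lie in a common circuit $C$. Given such a $C$ containing $x$ and $y$, the independent set $C \setminus \{x\}$ extends to a basis $B$ avoiding $x$, and adding $x$ creates a unique circuit contained in $C$, which (by circuit elimination or minimality) must equal $C$ itself. Thus for any $z \in C \setminus \{x\}$, the swap $(B \setminus \{z\}) \cup \{x\}$ is again a basis, giving an edge between $x$ and $z$ in $G$; in particular $x$ and $y$ are joined by an edge, so $G$ is connected. Combining this with the upper bound and the product decomposition yields $\dim \Gamma_M = \sum_{i=1}^c (|E_i|-1) = n - c$, as desired.
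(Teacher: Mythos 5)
Your argument is correct and complete: the upper bound via the $c$ disjointly supported affine equations $\sum_{j\in E_i}x_j=r_M(E_i)$, the reduction to the connected case via $\Gamma_M=\Gamma_{M_1}\times\cdots\times\Gamma_{M_c}$, and the lower bound via basis-exchange directions $e_y-e_x$ along a spanning tree of the exchange graph (whose connectivity you correctly derive from the fact that any two elements of a connected matroid lie on a common circuit, together with the fundamental-circuit exchange property) together give $\dim\Gamma_M=n-c(M)$. The paper itself supplies no proof of this statement --- it is quoted as Corollary 4.7 of Borovik--Gelfand--White --- so there is nothing to compare against; your write-up is a standard, self-contained proof of that cited result.
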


We can use a rank-preserving weak map $\mathbb{1} : M \to N$ to bound the number of connected components of $M$ above by the number of connected components of $N$.

\begin{cor}
    Let $M$ and $N$ be matroids {on $[n]$}, such that $\mathbb{1} : M \to N$ is a rank preserving weak map. Then, $c(M) \leq c(N)$.
\end{cor}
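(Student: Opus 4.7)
The plan is to compare the dimensions of the matroid base polytopes $\Gamma_M$ and $\Gamma_N$ via Corollary~\ref{cor:matroid-poly-dim} and reduce the claim to the containment $\Gamma_N \subseteq \Gamma_M$. Since $\dim \Gamma_M = n - c(M)$ and $\dim \Gamma_N = n - c(N)$, the inequality $c(M) \leq c(N)$ is equivalent to $\dim \Gamma_N \leq \dim \Gamma_M$, which I would obtain by showing $\mathcal{B}(N) \subseteq \mathcal{B}(M)$ and then taking convex hulls.

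To establish the basis containment, I would use exactly the same ingredient as in Proposition~\ref{n-connect}: by Proposition 9.1.2(e) of \cite{white1986theory}, the weak-map hypothesis gives $r_M(X) \geq r_N(X)$ for every $X \subseteq [n]$. Let $k = r_M(M) = r_N(N)$ and fix $B \in \mathcal{B}(N)$. Then $|B| = k$ and $r_M(B) \geq r_N(B) = k$, so $r_M(B) = |B| = k = r_M(M)$, which forces $B$ to be a basis of $M$. Hence $\mathcal{B}(N) \subseteq \mathcal{B}(M)$, and consequently
\begin{equation*}
    \Gamma_N = \convex\{e_B : B \in \mathcal{B}(N)\} \subseteq \convex\{e_B : B \in \mathcal{B}(M)\} = \Gamma_M,
\end{equation*}
so $\dim \Gamma_N \leq \dim \Gamma_M$. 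Applying Corollary~\ref{cor:matroid-poly-dim} to both sides yields $n - c(N) \leq n - c(M)$, i.e., $c(M) \leq c(N)$.

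There is no real obstacle here; the whole argument is a three-line consequence of Corollary~\ref{cor:matroid-poly-dim} together with the observation that a rank-preserving weak map embeds the base polytope of the target into that of the source. The only thing to be careful about is the direction of the inclusion: weak maps make $M$ have \emph{more} independent sets (and bases of the common rank) than $N$, so $\Gamma_N$ is the smaller polytope and $M$ has \emph{fewer} connected components.
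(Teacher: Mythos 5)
Your proposal is correct and follows essentially the same route as the paper: establish $\mathcal{B}(N) \subseteq \mathcal{B}(M)$ (the paper gets this directly from the definition of a weak map plus equal ranks, whereas you route it through the rank-function inequality, which is equivalent), deduce $\Gamma_N \subseteq \Gamma_M$, and apply Corollary~\ref{cor:matroid-poly-dim}. No issues.
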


\begin{proof}
    The rank-preserving weak map $\mathbb{1} : M \to N$ implies that {the preimage of} $\mathcal{B}(N)$ is contained in $\mathcal{B}(M)$. Therefore, the matroid base polytope $\Gamma_N$ is contained in $\Gamma_M$. Thus, the dimension of $\Gamma_N$ is at most the dimension of $\Gamma_M$. Then, by Corollary~\ref{cor:matroid-poly-dim},
    \begin{equation*}
        |E(N)| - c(N) \leq |E(M)| - c(M).
    \end{equation*}
    It follows that $c(M) \leq c(N)$.
\end{proof}

Rank-preserving weak maps interact nicely with matroid minors. Let $M$ be a matroid {and let} $X,Y$ {be} disjoint subsets of $E(M)$. The matroid minor $N = (M/X) \setminus Y$ is called \emph{properly expressed} if it is obtained from $M$ without contracting any loops {or} deleting any coloops (Definition 5.6 in~\cite{lucas1975weak}).

\begin{prop}[Proposition 5.7 in~\cite{lucas1975weak}]
    Any minor can be properly expressed.
\end{prop}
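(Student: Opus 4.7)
The plan is to induct on $|X|+|Y|$, the total number of operations used to express the minor. The base case $|X|+|Y|=0$ is trivial, as then $N=M$ and the expression is empty (hence vacuously proper).

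For the inductive step, I pick any $e\in X\cup Y$ to process first, and use the identities $M/e=M\setminus e$ (when $e$ is a loop of $M$) and $M\setminus e=M/e$ (when $e$ is a coloop of $M$) to ensure the first operation is safe. Concretely: if $e\in X$ is not a loop of $M$, I contract $e$; if $e\in X$ is a loop of $M$, I apply the loop identity and delete $e$ instead; the two dual cases for $e\in Y$ are handled analogously using the coloop identity. Since the set of loops and the set of coloops of any matroid are disjoint (a loop lies in no basis while a coloop lies in every basis), in each case the first operation neither contracts a loop nor deletes a coloop.

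What remains after processing $e$ is a minor of $M/e$ (in the contraction cases) or of $M\setminus e$ (in the deletion cases), expressed with $|X|+|Y|-1$ operations. The inductive hypothesis applied to this smaller instance yields a proper expression of the remainder, and prepending the safe first step produces a proper expression of $N$; here I implicitly use the commutativity of deletion and contraction on distinct elements to interpret the resulting sequence of single-element operations as a new $(X',Y')$ with $X'\cup Y'=X\cup Y$.

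The main obstacle is the clean case analysis at the first step and the verification that prepending a safe first operation to a properly expressed sub-minor yields a properly expressed full expression. This amounts to noting that switching a contraction of a loop to a deletion (or a deletion of a coloop to a contraction) preserves both the resulting matroid and the property of being safe, which is exactly the content of the two identities above. No subtle reordering of operations is required, because the induction processes one element at a time.
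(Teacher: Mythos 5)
The paper does not prove this proposition; it is quoted verbatim from Lucas (Proposition 5.7 of the cited reference), so there is no in-paper argument to compare against. Your element-by-element induction is correct in substance and is a legitimate alternative to the standard one-shot argument, which replaces $X$ by a basis $B_X$ of $M|X$ (using $M/X=(M/B_X)\setminus(X\setminus B_X)$, since the elements of $X\setminus B_X$ become loops after contracting $B_X$) and then dually replaces the deleted set by a coindependent one. The one point you gloss over is the final conversion of your interleaved safe sequence into the normal form $(M/X')\setminus Y'$ with all contractions performed first: commutativity of deletion and contraction guarantees the resulting matroid is unchanged, but you also need that \emph{safety} survives the reordering. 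This does hold, and for a clean reason worth stating: whether $e$ is a loop of $(M/A)\setminus B$ depends only on $A$ (namely on whether $e\in\cl_M(A)$), and whether $e$ is a coloop of $(M/A)\setminus B$ depends only on $B$ (since $r_{M/A}((E\setminus A)\setminus e)<r_{M/A}(E\setminus A)$ reduces to $r_M(E\setminus e)<r_M(E)$). With that observation added, your proof is complete; equivalently, it shows $X'$ is independent in $M$ and $Y'$ is coindependent in $M/X'$, which is the usual formulation of a properly expressed minor.
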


We can use a rank-preserving weak map $\mathbb{1} : M \to N$ and a properly expressed minor $N'$ of $N$ to find a minor $M'$ of $M$ for which $\mathbb{1} : M' \to N'$ is a rank-preserving weak map.

\begin{thm}[Theorem 5.8 in~\cite{lucas1975weak}] \label{thm:minor-contain}
Let $M$ and $N$ be matroids such that {$E(M) = E(N)$ and} $\mathbb{1} : M \to N$ is a rank-preserving weak map. If $X,Y \subseteq E(N)$ and $N'=(N/X)\setminus Y$ is a properly expressed minor of $N$, then $M'=(M/X) \setminus Y$ is a minor of $M$ such that $\mathbb{1} : M' \to N'$ is a rank-preserving weak map.
\end{thm}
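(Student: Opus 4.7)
The plan is to translate the ``properly expressed'' hypothesis into two explicit rank identities for $N$, push those identities across the rank-preserving weak map to $M$, and then verify both conclusions by direct calculation using the standard deletion and contraction rank formulas.

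First I would unpack proper expression in rank-theoretic terms. Contracting the elements of $X$ one at a time without ever contracting a loop is possible exactly when $X$ is independent in $N$, i.e., $r_N(X) = |X|$. After that contraction, deleting the elements of $Y$ without ever deleting a coloop is possible exactly when $Y$ is coindependent in $N/X$, which via the formula $r_{N/X}(A) = r_N(A \cup X) - r_N(X)$ for $A \subseteq E \setminus X$ is equivalent to $r_N(E \setminus Y) = r_N(E)$.

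Next I would transport these identities from $N$ to $M$. The rank-preserving weak map $\mathbb{1} : M \to N$ gives both $r_M(A) \geq r_N(A)$ for all $A \subseteq E$ and $r_M(E) = r_N(E)$. Two sandwich arguments then suffice: from $|X| \geq r_M(X) \geq r_N(X) = |X|$ one obtains $r_M(X) = |X|$, and from $r_M(E) \geq r_M(E \setminus Y) \geq r_N(E \setminus Y) = r_N(E) = r_M(E)$ one obtains $r_M(E \setminus Y) = r_M(E)$.

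With these identities in hand, I would verify the two conclusions directly. For any $A \subseteq E \setminus (X \cup Y)$ the standard formulas give
\[
r_{M'}(A) = r_M(A \cup X) - r_M(X), \qquad r_{N'}(A) = r_N(A \cup X) - r_N(X),
\]
so the weak map inequality $r_{M'}(A) \geq r_{N'}(A)$ follows at once from $r_M(A \cup X) \geq r_N(A \cup X)$ together with $r_M(X) = r_N(X)$. For rank-preservation, applying the formulas at $A = E \setminus (X \cup Y)$ and invoking $r_M(E \setminus Y) = r_N(E \setminus Y)$ together with $r_M(X) = r_N(X)$ yields $r_{M'}(E(M')) = r_{N'}(E(N'))$. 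That $M'$ is itself a minor of $M$ is automatic since $X \cap Y = \emptyset$.

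The main obstacle is the bookkeeping in the first step: one must check that the ``properly expressed'' hypothesis, which a priori allows contractions and deletions to be interleaved in any loop- and coloop-avoiding order, can always be reorganized so as to place all contractions first, thereby distilling proper expression into the two clean rank identities $r_N(X) = |X|$ and $r_N(E \setminus Y) = r_N(E)$. Once those identities are secured, everything else reduces to monotonicity of rank, the weak map inequality $r_M \geq r_N$, and routine applications of the minor rank formulas.
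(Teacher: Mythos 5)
Your argument is correct. Note that the paper does not prove this statement at all --- it is imported verbatim as Theorem 5.8 of Lucas's paper on weak maps --- so there is no in-paper proof to compare against; your proposal is essentially the standard (and Lucas's original) argument. One remark on the step you flag as the main obstacle: there is less to do there than you fear, because Lucas's Definition 5.6 of ``properly expressed'' for a minor written in the fixed form $(N/X)\setminus Y$ amounts directly to the two conditions you derive, namely $X$ independent in $N$ and $Y$ coindependent in $N/X$ (equivalently $r_N(X)=|X|$ and $r_N(E\setminus Y)=r_N(E)$; one checks easily that these are also equivalent to the order-reversed conditions, so no reorganization argument is needed). With those identities, your two sandwich inequalities and the deletion/contraction rank formulas give both the weak map inequality and rank preservation exactly as you describe, and the proof is complete.
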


For any {rank-$k$} matroid $N$ on $X${,} where {$|X| = n$}, {there exists a matroid $R \cong U^k_n$ such that} $\mathcal{B}(N) \subseteq \mathcal{B}({R})$. This implies the following result.

\begin{cor}[Corollary 5.9 in~\cite{lucas1975weak}]\label{cor:uni-free}
Let $M$ and $N$ be matroids such that $\mathbb{1} : M \to N$ is a rank-preserving weak map. If $M$ is $U^k_n$-free then $N$ is $U^k_n$-free.
\end{cor}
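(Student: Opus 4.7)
The plan is to argue by contrapositive: assume $N$ contains $U^k_n$ as a minor, and deduce that $M$ does too. Write $(N/X)\setminus Y \cong U^k_n$ for some disjoint $X, Y \subseteq E(N)$. By the preceding proposition (proper expressibility of minors), I can assume without loss of generality that this expression is proper, i.e.\ no loops are contracted and no coloops deleted.

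Now I apply Theorem~\ref{thm:minor-contain}: setting $N' = (N/X)\setminus Y \cong U^k_n$, this yields $M' = (M/X)\setminus Y$ as a minor of $M$ together with a rank-preserving weak map $\mathbb{1}: M' \to N'$. Both matroids sit on the same $n$-element ground set $E' = E(M) \setminus (X \cup Y)$, and rank-preservation gives $r_{M'}(M') = r_{N'}(N') = k$. By the defining property of weak maps, every independent set of $N'$ is independent in $M'$, so $\mathcal{I}(N') \subseteq \mathcal{I}(M')$. In particular, each basis $B$ of $N'$ has $|B| = k = r(M')$ and is independent in $M'$, so $B \in \mathcal{B}(M')$. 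Hence $\mathcal{B}(N') \subseteq \mathcal{B}(M')$.

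To finish, I invoke the observation immediately preceding the corollary: $M'$ is a rank-$k$ matroid on $n$ elements, so $\mathcal{B}(M') \subseteq \binom{E'}{k} = \mathcal{B}(N')$, since $N' \cong U^k_n$ realizes every $k$-subset as a basis. Combining the two inclusions gives $\mathcal{B}(M') = \mathcal{B}(N')$, so $M' \cong U^k_n$ and $M$ has $U^k_n$ as a minor, contradicting the hypothesis.

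The argument is almost entirely bookkeeping once Theorem~\ref{thm:minor-contain} is available; the only point that requires care is tracking the direction of the weak-map inclusion of bases and recognizing that $U^k_n$ is maximal in the weak-map order among rank-$k$ matroids on $n$ elements, so that no matroid can lie strictly above it.
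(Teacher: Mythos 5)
Your argument is correct and matches the paper's intended proof exactly: the paper derives this corollary from Theorem~\ref{thm:minor-contain} together with the observation that $U^k_n$ is maximal in the rank-preserving weak order among rank-$k$ matroids on $n$ elements, which is precisely the contrapositive argument you give. No gaps.
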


By Theorem~\ref{forbid:binary}, a matroid is binary if and only if it is $U^2_4$-free, so we have the following.

\begin{thm}[Theorem 6.5 in~\cite{lucas1975weak}] \label{thm:class-binary}
    Let $M$ and $N$ be matroid such that $\mathbb{1} : M \to N$ is a rank-preserving weak map. If $M$ in binary then so is $N$. 
\end{thm}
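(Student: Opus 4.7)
The plan is to combine Theorem~\ref{forbid:binary}, Corollary~\ref{cor:uni-free}, and Theorem~\ref{thm:minor-contain} in a contrapositive argument. By Theorem~\ref{forbid:binary}, ``binary'' is equivalent to ``$U^2_4$-free,'' so it suffices to show that if $M$ has no $U^2_4$ minor, then neither does $N$. Note that Corollary~\ref{cor:uni-free} on its own is not enough, because it only tells us that $N$ itself is not isomorphic to $U^2_4$; we need to rule out $U^2_4$ appearing anywhere in the minor poset of $N$, which is exactly what Theorem~\ref{thm:minor-contain} lets us do by lifting a hypothetical bad minor of $N$ back to a minor of $M$.

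Concretely, I would argue by contrapositive. Assume $N$ is not binary, so by Theorem~\ref{forbid:binary} there exist disjoint $X, Y \subseteq E(N)$ with $N' := (N/X) \setminus Y \cong U^2_4$. Using the proposition that any minor can be properly expressed, we may assume this expression for $N'$ is properly expressed. Then Theorem~\ref{thm:minor-contain} produces the minor $M' := (M/X) \setminus Y$ of $M$ together with a rank-preserving weak map $\mathbb{1} : M' \to N'$. Since $N' \cong U^2_4$ is manifestly not $U^2_4$-free, the contrapositive of Corollary~\ref{cor:uni-free} applied to this weak map shows that $M'$ is not $U^2_4$-free. Hence $M'$, and therefore $M$, has a $U^2_4$ minor, so by Theorem~\ref{forbid:binary}, $M$ is not binary.

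There is no real obstacle here; the entire content of the theorem is packaged into the three tools stated just above it, and the only thing to be careful about is the direction of the implication in Corollary~\ref{cor:uni-free} (it runs from $M$ to $N$, so we must invoke its contrapositive) and the need to pass to a properly expressed minor before invoking Theorem~\ref{thm:minor-contain}.
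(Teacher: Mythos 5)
Your argument is correct, but the premise you give for taking the long route is a misreading of the paper's definitions. The paper defines ``$N$-free'' as ``$N$ is not isomorphic to a \emph{minor} of $M$,'' so the conclusion of Corollary~\ref{cor:uni-free} --- that $N$ is $U^2_4$-free --- already rules out $U^2_4$ appearing anywhere in the minor poset of $N$, not merely that $N$ itself is not isomorphic to $U^2_4$. Consequently the theorem follows in one step: $M$ binary $\Rightarrow$ $M$ is $U^2_4$-free (Theorem~\ref{forbid:binary}) $\Rightarrow$ $N$ is $U^2_4$-free (Corollary~\ref{cor:uni-free}) $\Rightarrow$ $N$ binary. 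This is exactly the paper's (implicit) proof --- the sentence immediately preceding the theorem is the entire justification, and the result is in any case a cited theorem of Lucas. What your contrapositive detour through properly expressed minors and Theorem~\ref{thm:minor-contain} actually does is re-derive the relevant instance of Corollary~\ref{cor:uni-free} from scratch (indeed, it is essentially Lucas's proof of that corollary). The steps are all sound --- lifting a properly expressed $U^2_4$ minor of $N$ to a minor $M'$ of $M$ with a rank-preserving weak map onto it, and then concluding $M'$ has a $U^2_4$ minor --- so nothing is wrong, but the extra machinery is redundant given the tools as stated.
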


A rank-preserving weak map $\mathbb{1} : M \to N$ is called \emph{non-trivial} if $M \neq N$. If $M$ is binary and $N$ is $2$-connected, then there is no non-trivial rank-preserving weak map $\mathbb{1} : M \to N$.

\begin{thm}[Theorem 6.10 in~\cite{lucas1975weak}] \label{thm:class-binary-nontrivial}
    Let $M$ and $N$ be matroids such that $\mathbb{1} : M \to N$ is a {non-trivial,} rank-preserving weak map. If $M$ is binary, then $N$ is disconnected.
\end{thm}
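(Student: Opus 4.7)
The plan is to argue by contradiction: assume $N$ is $2$-connected, and deduce $M=N$, contradicting non-triviality of $\mathbb{1}$. By Theorem~\ref{thm:class-binary}, $N$ is also binary, and by Proposition~\ref{n-connect}, $M$ is likewise $2$-connected. Non-triviality combined with $\mathcal{B}(N)\subseteq \mathcal{B}(M)$ (from the rank-preserving weak map) forces $\mathcal{I}(N)\subsetneq \mathcal{I}(M)$; choosing $C$ minimal in $\mathcal{I}(M)\setminus \mathcal{I}(N)$ produces a circuit $C$ of $N$ that is independent in $M$.

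I would proceed by induction on $|E|$, with small cases ($|E|\le 2$) handled by inspection. If $|C|=1$, then $C$ is a loop of $N$, hence its own connected component, contradicting $2$-connectedness as soon as $|E|\ge 2$. So assume $|C|\ge 2$ and pick $e\in C$; since $e$ is not a loop of $N$, the contraction $N/e$ is a properly expressed minor. Theorem~\ref{thm:minor-contain} then produces a minor $M/e$ of $M$ (still binary, since minors of binary matroids are binary) together with a rank-preserving weak map $\mathbb{1}:M/e\to N/e$; this map is non-trivial because $C\setminus \{e\}$ is a circuit of $N/e$ but remains independent in $M/e$. By the inductive hypothesis, $N/e$ is disconnected.

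The hard part is upgrading ``$N/e$ is disconnected'' to a contradiction with the $2$-connectedness of $N$. Let $(E_1,E_2)$ be a $1$-separation of $N/e$. A submodular case analysis on $r_N$, splitting on whether $e$ lies in $\mathrm{cl}_N(E_1)$ and/or $\mathrm{cl}_N(E_2)$, shows that either one of $(E_1,E_2\cup \{e\})$, $(E_1\cup \{e\},E_2)$ is already a $1$-separation of $N$ (contradicting $2$-connectedness outright), or else both are $2$-separations and $N$ decomposes as a $2$-sum $N=N_1\oplus_2 N_2$. In the latter case, Proposition~\ref{prop:2-sum-minor} realizes each $N_i$ as a proper minor of $N$; pulling the weak map back via Theorem~\ref{thm:minor-contain} and applying the inductive hypothesis to whichever factor inherits non-triviality yields a $1$-separation of that factor, which re-inflates across the $2$-sum to a $1$-separation of $N$. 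The main obstacle is verifying that non-triviality is indeed inherited by at least one factor $N_i$: this requires tracking a basis $B^*\in \mathcal{B}(M)\setminus \mathcal{B}(N)$ across the $2$-sum, and this is precisely where the binarity of $M$ is used, via the fact that symmetric differences of circuits in a binary matroid decompose into disjoint unions of circuits, so that a ``lost basis'' on $N$'s side must already be visible on one of the factors.
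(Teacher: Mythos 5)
The paper does not actually prove this statement: it is imported verbatim as Theorem 6.10 of Lucas's 1975 paper on weak maps, so there is no internal proof to compare yours against. Judged on its own, the first half of your sketch is sound: a minimal element $C$ of $\mathcal{I}(M)\setminus\mathcal{I}(N)$ is indeed a circuit of $N$ that is independent in $M$; the loop case is handled correctly; and contracting a non-loop $e\in C$ gives, via Theorem~\ref{thm:minor-contain}, a rank-preserving weak map $\mathbb{1}\colon M/e\to N/e$ with $M/e$ binary, which is non-trivial because $C\setminus\{e\}$ is a circuit of $N/e$ yet independent in $M/e$. The base cases are also fine.

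The gap is exactly where you flag ``the main obstacle,'' and it is not filled. Granting that $N$ $2$-connected together with $N/e$ disconnected forces $N=N_1\oplus_2 N_2$ (your submodularity computation does work, modulo the $\lvert E_i\rvert\ge 2$ side conditions), you still must produce, for some $i$, a binary matroid with a \emph{non-trivial} rank-preserving weak map onto $N_i$. Your candidate is the minor $(M/X)\setminus Y$ of $M$ corresponding to the minor of $N$ isomorphic to $N_i$ supplied by Proposition~\ref{prop:2-sum-minor}. But the witness of non-triviality does not obviously survive: $C$ is independent in $M$, yet it need not remain independent in $(M/X)\setminus Y$, and if $C$ is a circuit of $N$ straddling both factors it is not even clear which factor should inherit the recursion; a priori both induced minors of $M$ could coincide with $N_1$ and $N_2$ even though $M\neq N$. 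The single sentence invoking symmetric differences of circuits in a binary matroid is a gesture at where binarity must enter, not an argument --- and it must enter essentially at this point, since the statement is false without binarity ($U^2_4$ admits connected non-trivial rank-preserving weak images). Until this step is an actual proof, the induction does not close. A smaller point: the final ``re-inflation'' is backwards --- the inductive hypothesis would conclude that $N_i$ is disconnected, and the contradiction should come from the fact that both parts of a $2$-sum decomposition of a $2$-connected matroid are themselves $2$-connected, a fact not stated in this paper that you would also need to justify.
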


\subsection{Positroids}\label{sec:positroidbasics}

We now consider the main object of our study, a class of ordered matroids called positroids. An \emph{ordered matroid} $M$ is a matroid whose ground-set $E(M)$ {is totally ordered}. We say that a matroid isomorphism $\phi: M \to N$ is \emph{order preserving} if both $M$ and $N$ are ordered matroids, and {for all} $\{x < y\} \subseteq E(M)$, $\{ \phi(x) < \phi(y) \} \subseteq E(N)$. Recall that the maximal minors of a matrix $A$ is the collection of determinants of all maximal square submatrices of $A$. If $A$ is a square matrix, then the unique maximal minor is the determinant of $A$.

\begin{defi}
    A \emph{positroid} is an $\mathbb{R}$-linear ordered matroid that can be realized by a matrix $A$ over $\mathbb{R}$ with all nonnegative maximal minors.
\end{defi}

{We say that an unordered matroid $M$ has a \emph{positroid ordering} if there exists a positroid $P$ such that $M$ and $P$ are isomorphic as matroids.} The following proposition due to Ardila, Rinc{\'o}n, and Williams shows that positroids are closed under duality and taking minors.

\begin{prop}[Proposition 3.5 in~\cite{ardila2016positroids}] \label{pos-dual-minor}
    Let $M$ be a positroid on {$E = \{e_1 < e_2 < \dots < e_n\}$}. Then $M^*$ is a positroid on {$E$}. Furthermore, for any {$S \subseteq E$}, the deletion $M \setminus S$ and the contraction $M/S$ are both positroids on {$E \setminus S$}. Here the total order on {$E \setminus S$} is the one inherited from {$E$}.
\end{prop}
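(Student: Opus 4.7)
The plan is to handle the three claims in order: $M^*$ is a positroid, then deletion, then contraction. Throughout, I would work directly with a realizing matrix $A$ of $M$ having all nonnegative maximal minors, producing an analogous matrix in each case.

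For duality, let $A$ be $k \times n$ realizing $M$ with all nonnegative maximal minors. I would construct an $(n-k) \times n$ matrix $B$ whose row space is the orthogonal complement (under the standard inner product) of the row space of $\widetilde{A} := AD$, where $D = \operatorname{diag}(1,-1,1,\ldots,(-1)^{n-1})$ is the alternating-sign diagonal matrix. The classical Pl\"ucker orthogonality identity yields $\Delta_I(\widetilde{A}) = \pm \Delta_{[n]\setminus I}(B)$ for every $I \in \binom{[n]}{k}$, with an explicit combinatorial sign depending on $I$, and a direct check confirms that the alternating diagonal twist cancels this sign, so that $\Delta_{[n]\setminus I}(B) = c \cdot \Delta_I(A)$ for a positive constant $c$. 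Hence every maximal minor of $B$ is nonnegative, and since the bases of $M(B)$ are the complements of the bases of $M(A)$, we have $M(B) = M^*$; the ordering on $E$ is inherited, so $M^*$ is a positroid.

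For deletion and contraction, it suffices to handle a single element $i$ and iterate, with contraction reducing to deletion via $M/S = (M^* \setminus S)^*$. For $M \setminus i$, if $i$ is not a coloop then the $k \times (n-1)$ submatrix $A'$ of $A$ obtained by removing column $i$ still has rank $k$, and its maximal minors form a subset of those of $A$---hence are nonnegative---so $A'$ realizes $M \setminus i$ as a positroid on $E \setminus \{i\}$ with the inherited order. If $i$ is a coloop, then $M \setminus i$ has rank $k - 1$; I would apply a positive-determinant row operation bringing column $i$ to $(1, 0, \ldots, 0)^{\top}$, then delete column $i$ and the first row. The resulting $(k-1) \times (n-1)$ matrix has, by cofactor expansion along column $i$, maximal minors equal to $\Delta_{\{i\} \cup S}(A)$ for $S \in \binom{[n]\setminus\{i\}}{k-1}$, all nonnegative.

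The main obstacle is sign bookkeeping in the duality step: verifying that the alternating diagonal twist $D$ exactly cancels the combinatorial signs $(-1)^{\sigma(I)}$ in the Pl\"ucker orthogonality identity, rather than leaving a mismatched uniform sign pattern. A secondary technical point is in the coloop case of deletion: one must confirm that a positive-determinant row operation normalizing column $i$ exists, which follows from the fact that the stabilizer of a nonzero vector in $\operatorname{GL}_k(\mathbb{R})$ meets both connected components of $\operatorname{GL}_k(\mathbb{R})$, so every appropriate coset does as well.
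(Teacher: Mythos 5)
The paper itself does not prove this proposition (it is quoted from Ardila--Rinc\'on--Williams), so I am comparing your outline against the standard argument, which your duality and non-coloop deletion steps reproduce correctly: the alternating twist $D$ does cancel the $I$-dependent sign $(-1)^{\sum_{i\in I}i}$ in the Pl\"ucker orthogonality relation up to a global constant (which can be made positive by negating a row of $B$), and deleting the column of a non-coloop preserves the rank and merely discards some of the nonnegative minors. Reducing contraction to deletion via duality is also the standard route.

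The coloop case of deletion, however, contains a genuine error. After normalizing column $i$ to $(1,0,\dots,0)^{\top}$ and deleting that column together with the first row, cofactor expansion gives $\Delta_S(A'') = (-1)^{|\{s\in S\,:\,s<i\}|}\,\Delta_{\{i\}\cup S}(gA)$, not $\Delta_{\{i\}\cup S}(A)$: the sign depends on how many columns of $S$ lie to the left of $i$, so it is not a global sign unless $i$ is the first or last column. Concretely, take $E=\{1<2<3\}$ and $A=\left(\begin{smallmatrix}0&1&0\\-1&0&1\end{smallmatrix}\right)$, which has $\Delta_{12}=\Delta_{23}=1$, $\Delta_{13}=0$ and realizes a positroid in which $2$ is a coloop; column $2$ is already $(1,0)^{\top}$, and your procedure outputs $A''=(-1\;\;1)$, which has a negative maximal minor, whereas $M\setminus 2\cong U^1_2$ on $\{1<3\}$ requires both entries to have the same sign. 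The gap is easily repaired---multiply each column $c_j$ of $A''$ with $j<i$ by $-1$ (which changes no bases and restores nonnegativity), or first move $i$ to the front by the twisted cyclic shift of Observation~\ref{obs:pos-mat-rot}---but as written the step fails. The secondary point you flag (existence of a positive-determinant row operation sending column $i$ to $e_1$) is fine for $k\geq 2$, and the $k=1$ case is immediate.
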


{Consider a totally ordered set $E = \{e_1 < e_2 < \dots < e_n\}$ and let $i \in [n]$ be fixed}. We define a new total order {$\leq_i$ on $E$} given by 
\begin{equation*}
    {e_i <_i e_{i+1} <_i \dots <_i e_n <_i e_1 <_i e_2 <_i \dots <_i e_{i-1}.}
\end{equation*}
{We take the cyclic interval $[e_i,e_j]$ to be the set of all $e_k \in E$ such that $e_i \leq_i e_k \leq_i e_j$. Let $X = \{x_1 <_i x_2 <_i \dots <_i x_m\}$ and $Y = \{y_1 <_i y_2 <_i \dots <_i y_m\}$ be subsets of $E$. We denote by $X \leq_i Y$ that for all $j \in [m]$, $x_j \leq_i y_j$.
If $X$ is a bounded subset of $E$, then we define $\max_i(X)$ to be the maximum element of $X$ with respect to the total order $\leq_i$. Similarly, we define $\min_i(X)$ to be the minimum element of $X$ with respect to the total order $\leq_i$.} We say that {two subsets $W$ and $Z$ of $E$ are \emph{crossing}} if there exist {$e_w,e_y \in W$} and {$e_x,e_z \in Z$} such that {$e_w <_w e_x <_w e_y <_w e_z$}. If {$W$} and {$Z$} are not crossing subsets of {$E$}, then we call them \emph{non-crossing subsets of {$E$}}. The following characterization of positroids, using non-crossing subsets, is due to Ardila, Rinc{\'o}n, and Williams, building on prior work of da Silva.

\begin{thm}[Theorem 5.1,5.2 in~\cite{ardila2017positively}; Chapter 4, Theorem 1.1 in~\cite{perez1987quelques}] \label{thm:pos-crossing}
    An ordered matroid $M$ on {$E$} is a positroid if and only if for any circuit $C$ and any cocircuit $C^*$ satisfying $C \cap C^* = \emptyset$, the sets $C$ and $C^*$ are non-crossing subsets of {$E$}.
\end{thm}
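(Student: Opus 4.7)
The plan is to prove the two directions separately. The forward direction can be read off from a positive realization through the alternating-sign pattern of signed circuits and signed cocircuits. The reverse direction, which requires producing a realization, is the main obstacle.

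For the forward direction, let $A$ be a matrix realizing $M$ with all maximal minors nonnegative. For every circuit $C$, the linear dependence $\sum_{c\in C}\lambda_c A_c = 0$ is unique up to a scalar, and Cramer's rule expresses each $\lambda_c$ as a signed maximal minor of $A$. Nonnegativity of these minors forces the signs of $(\lambda_c)_{c\in C}$ to alternate along the cyclic order on $E$. By Proposition~\ref{pos-dual-minor}, $M^*$ is also a positroid, so the analogous alternating-sign property holds for cocircuits. Now suppose for contradiction that some disjoint circuit $C$ and cocircuit $C^*$ cross, witnessed by $e_w,e_y\in C$ and $e_x,e_z\in C^*$ with $e_w<_w e_x<_w e_y<_w e_z$. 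I would restrict to the positroid minor on $C\cup C^*$ (again a positroid by Proposition~\ref{pos-dual-minor}), and, after perturbing $C$ or $C^*$ through the basis-exchange graph until their supports meet, apply the oriented-matroid orthogonality between a signed circuit and a signed cocircuit. The alternating signs together with the interleaved pattern $w,x,y,z$ force the two would-be cancelling terms of the inner product to have the same sign, producing the desired contradiction.

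For the reverse direction, suppose every disjoint circuit--cocircuit pair is non-crossing. My plan is to induct on $|E(M)|$, using that deletion and contraction preserve the non-crossing property in the inherited cyclic order, since circuits of $M/x$ and $M\setminus x$ are determined by circuits of $M$ and dually for cocircuits. At each inductive step I would extract from the circuit/cocircuit data a combinatorial ``positive witness'' -- the kind of object described in Section~\ref{sec:decpermbasics}, namely a decorated permutation or Grassmann necklace -- then build the positroid realization dictated by that witness and verify that its underlying matroid coincides with $M$. The main technical obstacle is this last verification: the non-crossing property alone is a rather weak combinatorial condition, and showing that it nails down every basis of $M$ is where the real work lies. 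The cleanest framework for this step is da Silva's positively oriented matroids, for which the non-crossing property is essentially the defining axiom; combining da Silva's combinatorial analysis with the realization construction of Ardila--Rinc\'on--Williams then closes the argument.
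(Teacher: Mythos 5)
This theorem is one the paper does not prove: it is imported wholesale from Ardila--Rinc\'on--Williams and da Silva (the citation in the theorem header is the ``proof''). So there is no in-paper argument to compare against; the only question is whether your sketch would stand as an independent proof, and as written it would not.

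In the forward direction, the step that fails is the appeal to circuit--cocircuit orthogonality. Orthogonality of a signed circuit and a signed cocircuit is a statement about elements of their \emph{intersection}, and here $C\cap C^*=\emptyset$ by hypothesis, so it says nothing; your fix --- ``perturbing $C$ or $C^*$ through the basis-exchange graph until their supports meet'' --- is not a defined operation on circuits and cocircuits, and any such perturbation would have to be shown to preserve both the interleaving pattern $e_w<_w e_x<_w e_y<_w e_z$ and the alternating-sign structure, which is essentially the claim being proved. A gap-free elementary route does exist and uses machinery already in the paper: contract $C\setminus\{e_w,e_y\}$ and delete $C^*\setminus\{e_x,e_z\}$ (and dispose of the remaining elements) to reach a four-element minor on $\{e_w,e_x,e_y,e_z\}$ in which $\{e_w,e_y\}$ is a circuit and $\{e_x,e_z\}$ a disjoint cocircuit; such a minor is never a positroid --- this is immediate from the three-term Pl\"ucker relation $p_{wx}p_{yz}-p_{wy}p_{xz}+p_{wz}p_{xy}=0$ with $p_{wy}=0$, or from Theorem~\ref{thm:pos-direct-sum} when the minor decomposes --- contradicting minor-closure (Proposition~\ref{pos-dual-minor}).

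The reverse direction is the genuinely hard half, and your sketch does not supply an argument: the induction stalls exactly where you say it does, and the closing move is to invoke da Silva and Ardila--Rinc\'on--Williams, i.e., the theorem itself. Concretely, extracting the Grassmann necklace of $M$ and ``building the positroid realization dictated by it'' produces the positroid envelope $P$ of $M$, for which one only gets $\mathcal{B}(M)\subseteq\mathcal{B}(P)$ (Theorem~\ref{thm:pos-intersect}); proving equality from the non-crossing hypothesis is the entire content of the theorem, and nothing in your outline addresses it. The actual proof in the literature does not proceed by this induction at all: da Silva's combinatorics identifies $M$ as the underlying matroid of a positively oriented matroid, and Ardila--Rinc\'on--Williams establish realizability by showing the matroid base polytope $\Gamma_M$ is a positroid polytope (via its two-dimensional faces, equivalently the cyclic-interval rank inequalities). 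If you intend to cite those results, the theorem needs no proof; if you intend to prove it, the polytope step is the one you must actually carry out.
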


Let $\mathcal{S} = \{S_1,S_2,\ldots,S_t\}$ be a partition of {a totally ordered set $E$}. Then $\mathcal{S}$ is a \emph{non-crossing partition} if for every pair $i,j \in [t]$, where $i \neq j$, $S_i$ and $S_j$ are non-crossing subsets of {$E$}. Ardila, Rinc{\'o}n, and Williams presented the following characterization of disconnected positroids using non-crossing partitions.

\begin{thm}[Theorem 7.6 in~\cite{ardila2016positroids}] \label{thm:pos-direct-sum}
    Let $M$ be a positroid on {the totally ordered set $E$} and let $S_1, S_2, \dots, S_t$ be the ground-sets of the connected components of $M$. Then $\{S_1, \dots, S_t\}$ is a non-crossing partition of {$E$}. Conversely, if $S_1, S_2, \dots, S_t$ form a non-crossing partition of {$E$} and $M_1, M_2, \dots, M_t$ are connected positroids on $S_1, S_2, \dots, S_t$, respectively, then $M_1 \oplus \dots \oplus M_t$ is a positroid {on $E$}.
\end{thm}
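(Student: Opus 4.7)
The plan is to prove the two implications separately.

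For the forward direction, the key tool is Theorem~\ref{thm:pos-crossing}. Suppose $M$ is a positroid on $E$ whose connected components have ground-sets $S_1, \ldots, S_t$, and suppose for contradiction that some pair $S_i, S_j$ is crossing, witnessed by $a, c \in S_i$ and $b, d \in S_j$ with $a <_a b <_a c <_a d$. Since crossing requires two elements from each block, $|S_i|, |S_j| \geq 2$, so the components $M|_{S_i}$ and $M|_{S_j}$ are matroid-connected with at least two elements. By the standard characterization of connectedness (any two elements of a connected matroid lie in a common circuit, and dually in a common cocircuit), there is a circuit $C \subseteq S_i$ with $\{a,c\} \subseteq C$ and a cocircuit $C^* \subseteq S_j$ with $\{b,d\} \subseteq C^*$. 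Since circuits and cocircuits of a direct sum are supported inside a single summand, $C$ is a circuit and $C^*$ a cocircuit of $M$; they are disjoint because $S_i \cap S_j = \emptyset$. Yet $a, b, c, d$ witness that $C$ and $C^*$ are crossing, contradicting Theorem~\ref{thm:pos-crossing}.

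For the converse, I would argue by induction on $t$, with the base case $t=1$ trivial. For the inductive step, I would invoke the fact that every non-crossing partition of a cyclically ordered set has at least one block $S_j$ which is a cyclic interval $[e_p, e_q]$. The remaining blocks form a non-crossing partition of $E \setminus S_j$ under the inherited cyclic order, so by induction $N := \bigoplus_{i \neq j} M_i$ is a positroid on $E \setminus S_j$. After cyclically rotating $E$ so that $S_j$ occupies the final positions, the block-diagonal matrix with a nonnegative realization $A_N$ of $N$ in the upper-left and a nonnegative realization $A_{M_j}$ of $M_j$ in the lower-right has every maximal minor equal to the product of a nonnegative maximal minor of $A_N$ with one of $A_{M_j}$, hence nonnegative. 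Rotating back to the original cyclic order via Postnikov's cyclic-shift symmetry of the nonnegative Grassmannian (which multiplies each shifted column by $(-1)^{k-1}$) then yields the desired nonnegative realization of $M_1 \oplus \cdots \oplus M_t$.

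The main obstacle is the sign bookkeeping in the converse. If one instead tries to directly assemble a block-diagonal realization with $S_j$ placed at an interior interval $[e_p,e_q]$, the resulting maximal minors pick up a factor $(-1)^{k_j \cdot c}$, where $k_j$ is the rank of $M_j$ and $c$ counts the chosen columns from $\{e_{q+1}, \ldots, e_n\}$; when $k_j$ is odd, this sign is not constant across minors, so no uniform sign correction suffices. Routing through cyclic rotation is what allows a clean placement of $S_j$ at the boundary, but one must then verify that the induced column sign-twists preserve the underlying matroid $M_1 \oplus \cdots \oplus M_t$ and land in the correct positroid stratum.
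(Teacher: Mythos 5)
The paper does not prove this statement itself; it is imported verbatim as Theorem 7.6 of Ardila--Rinc\'on--Williams, so there is no internal proof to compare against. Judged on its own, your argument is correct and is essentially the standard proof. The forward direction is exactly the right use of Theorem~\ref{thm:pos-crossing}: a crossing pair of blocks forces $|S_i|,|S_j|\geq 2$, each block carries a $2$-connected component, so you get a circuit through $a,c$ in $S_i$ and (by duality of $2$-connectedness) a cocircuit through $b,d$ in $S_j$; these are a disjoint crossing circuit--cocircuit pair of $M$, a contradiction. The converse is also sound: the existence of an interval block in a non-crossing partition, the inductive reduction on $E\setminus S_j$, the block-diagonal assembly once $S_j$ sits at the end of the linear order (where the chosen $k\times k$ submatrix is itself block-diagonal, so minors factor with no sign), and the final appeal to closure of positroids under cyclic rotation, which the paper asserts (and which you correctly identify as the place where the $(-1)^{k-1}$ column twist is absorbed). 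Your remark about why an interior placement of $S_j$ fails sign-wise is exactly the subtlety that makes the rotation detour necessary. Two small points you leave as asserted facts: that every non-crossing partition contains a block that is a cyclic interval (provable by taking a block of minimal cyclic span), and that the orders induced on $E\setminus S_j$ and on $S_j$ after the rotation agree with the original orders up to a further cyclic shift, which again is handled by rotation-invariance. Neither is a gap of substance.
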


\subsubsection{Dihedral action}

{Let $r$ and $s$ be the bijections on $[n]$ given by, for all $i \in [n]$, $r(i) = i+1$ and $s(i) = n-i+1$ both taken modulo $n$. Take $D_n$ to be the dihedral group on $2n$-elements generated by $r$ and $s$ under function composition. For a totally ordered set $E = \{e_1 < e_2 < \dots < e_n\}$, we define the group action of $D_n$ on $E$ as follows,}
\begin{align*}
    {r \cdot E} &{= \{ e_2 < e_3 < \dots < e_n < e_1 \}}\\
    {s \cdot E} &{= \{ e_n < e_{n-1} < \dots < e_1 \}.}
\end{align*}
{Given an arbitrary subset $X \subseteq E$, for any $\omega \in D_n$, we define the group action of $D_n$ on $X$ as}
\[
    \omega \cdot X = \{e_{w(i)} : e_i \in X\},
\]
{where the ordering is inherited from $\omega \cdot E$. For an ordered matroid $M$ on the ground-set $E$ and for any $\omega \in D_n$, we take $\omega \cdot M$ to be the ordered matroid on $\omega \cdot E$ whose bases are}
\[
    \mathcal{B}(\omega \cdot M) = \{ {\omega \cdot B} : B \in \mathcal{B}(M)\}.
\]

Non-crossing subsets, and therefore positroids, are preserved under dihedral action. {The closure of positroids under dihedral action can also be immediately obtained from Lemma 4.6 in} \cite{blum2001base} {and Corollary 9.5 in} \cite{lam2024polypositroids}.

\subsubsection{Series-parallel connections and 2-sums of positroids}

The following characterization of non-crossing subsets is useful when we consider $2$-sums and series-parallel connections of positroids. Instead of just assuming that the {ground-set} of a matroid {is totally} ordered, it is convenient to assume that {the ground-set is a subset of} a densely {totally} ordered set. Then, it is always possible to add an element to the {ground-set} anywhere in the order.

\begin{obs} \label{crossing-subsets}
    Let $X$ and $Y$ be subsets of a densely totally ordered set {$E$} such that $\lvert X \rvert \geq 2$, $\lvert Y \rvert \geq 2$, and {$X \cap Y = \{e_z\}$}. Then $X$ and $Y$ are non-crossing subsets of {$E$} if and only if
    \begin{itemize}
        \item[(i)] {for all} {$e_x \in X$} and {$e_y \in Y$}, {$e_x \leq_x e_z \leq_x e_y$}; or
        \item[(ii)] {for all} {$e_x \in X$} and {$e_y \in Y$}, {$e_y \leq_y e_z \leq_y e_x$}.
    \end{itemize}
\end{obs}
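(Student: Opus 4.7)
The plan is to prove each direction by translating everything to the cyclic order $\leq_z$ centered at the shared element, in which $e_z$ is the minimum. The geometric content of (i) and (ii) is that, after cutting the cycle at $e_z$, the non-shared elements of $X$ and $Y$ occupy disjoint initial and terminal segments of $\leq_z$; conditions (i) and (ii) distinguish the two orientations of this separation.

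For the direction $(\Leftarrow)$, I would assume (i) holds and suppose toward a contradiction that $X$ and $Y$ cross. Then there exist distinct elements $a, c \in X$ and $b, d \in Y$ satisfying $a <_a b <_a c <_a d$. I apply (i) with $(e_x, e_y) = (a, b)$ to obtain $e_z \leq_a b$, which forces $e_z$ to lie in the closed arc from $a$ to $b$ in the cyclic order; a second application with $(e_x, e_y) = (c, d)$ yields $e_z \leq_c d$, forcing $e_z$ into the closed arc from $c$ to $d$. Because the four points $a, b, c, d$ cyclically alternate, these two closed arcs are disjoint, giving a contradiction. The case where (ii) holds instead follows by symmetry, since (ii) is obtained from (i) by interchanging the roles of $X$ and $Y$, and non-crossing is itself symmetric in $X$ and $Y$.

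For the direction $(\Rightarrow)$, I would argue the contrapositive. Suppose neither (i) nor (ii) holds. The failure of (i) produces $e_{x_1} \in X$ and $e_{y_1} \in Y$ with $e_{y_1} <_{x_1} e_z$; rotating to the order $\leq_z$, this becomes $e_z <_z e_{x_1} <_z e_{y_1}$, and in particular $e_{x_1} \in X \setminus \{e_z\}$ and $e_{y_1} \in Y \setminus \{e_z\}$. The failure of (ii) analogously provides $e_z <_z e_{y_2} <_z e_{x_2}$ with $e_{x_2} \in X \setminus \{e_z\}$ and $e_{y_2} \in Y \setminus \{e_z\}$. Since $X \setminus \{e_z\}$ and $Y \setminus \{e_z\}$ are disjoint, $e_{y_1} \neq e_{x_2}$, so either $e_{y_1} <_z e_{x_2}$ or $e_{x_2} <_z e_{y_1}$. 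In the former subcase, $e_z <_z e_{x_1} <_z e_{y_1} <_z e_{x_2}$ supplies four distinct elements whose memberships in $Y, X, Y, X$ (using $e_z \in Y$) witness a crossing of $X$ and $Y$; in the latter subcase, $e_z <_z e_{y_2} <_z e_{x_2} <_z e_{y_1}$ similarly witnesses a crossing with $e_z$ played as an $X$-element. Either way we contradict the non-crossing hypothesis.

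The main subtle point, which the dual membership $e_z \in X \cap Y$ handles elegantly, is that the witnesses $e_{x_1}, e_{x_2}, e_{y_1}, e_{y_2}$ need not be four distinct elements; using $e_z$ itself as one entry of the alternating chain sidesteps any case analysis on coincidences among them. The remainder is a routine translation between the cyclic orders $\leq_a$, $\leq_c$, and $\leq_z$.
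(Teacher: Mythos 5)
Your proof is correct. The direction ``(i) or (ii) implies non-crossing'' matches the paper's argument in substance, just phrased via two disjoint closed arcs that would both have to contain $e_z$ rather than via the paper's dichotomy on which of the two $X$-elements occurs first. The other direction is where you genuinely diverge: the paper assumes non-crossing together with $\neg$(i) and $\neg$(ii), extracts a witness to $\neg$(i), chooses an extremal element $e_{x'} = \max_x\{\cdots\}$ of $X$, and uses the non-crossing hypothesis to force condition (ii) to hold after all, reaching a contradiction; you instead prove the clean contrapositive, pulling one witness from $\neg$(i) and one from $\neg$(ii), normalizing both to the order $\leq_z$, and splicing $e_z$ itself into the resulting chain to exhibit an explicit alternating quadruple ($Y,X,Y,X$ or $X,Y,X,Y$ depending on which of $e_{y_1} <_z e_{x_2}$ or $e_{x_2} <_z e_{y_1}$ holds). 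Exploiting $e_z \in X \cap Y$ as the fourth point of the crossing is the device that buys you the most: it removes the need for any extremal choice and, as you note, all the required distinctness among the four points falls out of the strict inequalities together with the disjointness of $X \setminus \{e_z\}$ and $Y \setminus \{e_z\}$. The result is a somewhat more self-contained argument than the paper's, which leaves to the reader both the verification of its three non-existence claims and the final step that they imply (ii).
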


\begin{proof}
    Without loss of generality, suppose that {for all} {$e_x \in X$} and {$e_y \in Y$}, {$e_x \leq_x e_z \leq_x e_y$}. For distinct {$e_a,e_b \in X$} and {$e_c,e_d \in Y$} we have either {$e_a <_a e_b \leq_a e_c,e_d$} or {$e_b <_b e_a \leq_b e_c,e_d$}. Therefore, $X$ and $Y$ are non-crossing.

    Now, suppose that $X$ and $Y$ are non-crossing subsets of {$E$} and that neither (i) nor (ii) hold. Then there exist {$e_x \in X \setminus \{e_z\}$} and {$e_y \in Y \setminus \{e_z\}$} such that {$e_x <_x e_y <_x e_z$}. Let
    \[ {e_{x'} = \max_x \{ e_x \in X \setminus \{e_z\} : e_x <_x e_y <_x e_z  \}} . \]
    Then there does not exist any {$e_{x''} \in X$} such that {$e_{x'} <_x e_{x''} <_x e_y$}, and since $X$ and $Y$ are non-crossing, there does not exist any {$e_{y'} \in Y$} such that {$e_z <_z e_{y'} <_z e_x$}. {Furthermore,} there does not exist any {$e_{x''} \in X$} such that {$e_y <_x e_{x''} <_x e_z$}. However this implies (ii), {which is} a contradiction.
\end{proof}

It is shown in~\cite{parisi2021m} that series-parallel positroids are closed under series-parallel connections, with appropriate ordering on the elements. {By Corollary 4.18 of} \cite{bonin2024characterization}{, if two matroids have positroid orderings, and their ground-sets have a single element in common, then their series-parallel connection has a positroid ordering.} We extend these results by showing that positroids on non-crossing subsets are closed under series-parallel connections and $2$-sums.

\begin{prop} \label{2sum:pos-construct}
    Let $P$ and $Q$ be positroids on subsets of a densely totally ordered set {$E$} such that $\lvert E(P) \rvert \geq 2$, $\lvert E(Q) \rvert \geq 2$, {$E(P) \cap E(Q) = \{e_z\}$}, and $E(P)$ and $E(Q)$ are non-crossing subsets of {$E$}. Then
    \begin{itemize}
        \item[(i)] $S(P,Q)$ is a positroid.
        \item[(ii)] $P(P,Q)$ is a positroid.
        \item[(iii)] $P \oplus_2 Q$ is a positroid.
    \end{itemize}
\end{prop}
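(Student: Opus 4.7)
The plan is to use the circuit-cocircuit characterization of positroids in Theorem~\ref{thm:pos-crossing}: it suffices to verify that every disjoint circuit-cocircuit pair of the constructed matroid is non-crossing in $E$. I will prove (i) directly and then deduce (ii) and (iii) at essentially no extra cost. Indeed, $P(P,Q) = S(P^*,Q^*)^*$; by Proposition~\ref{pos-dual-minor} the duals $P^*, Q^*$ are positroids on the same ground-sets with the same non-crossing structure, so applying (i) to $P^*,Q^*$ and dualizing yields (ii). A direct comparison of circuit collections gives $P \oplus_2 Q = P(P,Q) \setminus e_z$, so Proposition~\ref{pos-dual-minor} then gives (iii) from (ii).

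For (i), set $A_P = E(P) \setminus \{e_z\}$ and $A_Q = E(Q) \setminus \{e_z\}$. By Observation~\ref{crossing-subsets}, up to interchanging $P$ and $Q$ I may assume that cyclically around $E$ the element $e_z$ separates an arc containing $A_Q$ from an arc containing $A_P$; consequently any subset of $A_P$ is non-crossing in $E$ with any subset of $A_Q \cup \{e_z\}$, and symmetrically. The circuits of $S(P,Q)$ fall into three types: circuits of $P$ missing $e_z$, circuits of $Q$ missing $e_z$, and combined circuits $C_1 \cup C_2$ with $e_z \in C_1 \in \mathcal{C}(P)$ and $e_z \in C_2 \in \mathcal{C}(Q)$. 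Using $S(P,Q)^* = P(P^*,Q^*)$, the cocircuits are every cocircuit of $P$, every cocircuit of $Q$, and combined cocircuits $(D_1 \cup D_2) \setminus \{e_z\}$ with $e_z \in D_1 \in \mathcal{C}^*(P)$ and $e_z \in D_2 \in \mathcal{C}^*(Q)$.

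Take a disjoint circuit-cocircuit pair $(C, C^*)$ in $S(P,Q)$ and case-analyze by type. When $C$ and $C^*$ both lie inside $E(P)$ (respectively $E(Q)$), they are a disjoint circuit-cocircuit pair of the positroid $P$ (respectively $Q$), so Theorem~\ref{thm:pos-crossing} applied there gives non-crossing in $E(P)$, which transfers to non-crossing in $E$ since the cyclic order on $E(P)$ is inherited from that on $E$. When $C$ and $C^*$ sit in opposite arcs, non-crossing is immediate from the cyclic separation. The real work is in the remaining mixed cases, where a combined $C$ or $C^*$ has support straddling the $e_z$-boundary. In each such configuration my plan is to assume a crossing $w <_w x <_w y <_w z$ in $E$ and track the positions of $w,x,y,z$ among $A_P$, $A_Q$, and $e_z$; the key observation is that $e_z$ itself, which belongs to the $P$-part of the combined circuit or cocircuit, can stand in for the ``missing'' element of the crossing on the $E(P)$ side, producing a crossing of a $P$-circuit against a disjoint $P$-cocircuit and contradicting Theorem~\ref{thm:pos-crossing} for $P$ (or the analogous statement for $Q$).

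The apparent main obstacle is the combined-circuit-versus-combined-cocircuit case, but it is in fact vacuous: disjointness of $C = C_1 \cup C_2$ and $C^* = (D_1 \cup D_2) \setminus \{e_z\}$ forces $(C_1 \cup C_2) \cap (D_1 \cup D_2) \subseteq \{e_z\}$, and combined with $e_z \in C_1 \cap D_1$ this gives $C_1 \cap D_1 = \{e_z\}$ in $P$. But a circuit and a cocircuit of any matroid can never meet in exactly one element, so this case does not arise. This reduces the proof of (i) to the position-tracking argument for the mixed cases.
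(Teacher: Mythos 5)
Your argument is correct, but it takes a genuinely different route from the paper's. The paper proves (i) by a direct matrix construction: it row-reduces realizing matrices $A_P$ and $A_Q$ so that the column of $e_z$ becomes the last, respectively first, standard basis vector, glues them into a block matrix sharing that column, and observes that every maximal minor of the glued matrix factors as a product of a maximal minor of $A_P$ and one of $A_Q$, hence is nonnegative; (ii) and (iii) then follow by duality and deletion exactly as in your reduction. Your proof of (i) instead verifies the circuit--cocircuit criterion of Theorem~\ref{thm:pos-crossing} combinatorially, using the circuit description of $S(P,Q)$ and of its dual $P(P^*,Q^*)$. The pieces check out: the pure cases follow from the positroid hypothesis on $P$ and $Q$ together with the arc separation supplied by Observation~\ref{crossing-subsets}; in the mixed cases, substituting $e_z$ for the element on the wrong side of the boundary does preserve the alternating pattern, because $e_z$ sits at the interface of the two arcs and hence lies between any element of $E(P)\setminus\{e_z\}$ and any element of $E(Q)\setminus\{e_z\}$ in the relevant linear orders, so one obtains a disjoint crossing circuit--cocircuit pair inside $P$ or inside $Q$; and your observation that the combined-versus-combined case is vacuous by circuit--cocircuit orthogonality ($\lvert C\cap C^*\rvert\neq 1$) is correct and tidy. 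The trade-off: the paper's proof is shorter and produces an explicit nonnegative realization of $S(P,Q)$, in the spirit of Section~\ref{sec:incidence}, whereas yours is purely matroid-theoretic, avoids any realizability manipulation, and makes visible exactly where the non-crossing hypothesis on $E(P)$ and $E(Q)$ enters. To make your sketch fully rigorous you would still need to write out the position-tracking in the mixed cases, but there is no gap in the idea.
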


\begin{proof}
    By Observation~\ref{crossing-subsets}, without loss of generality, suppose that {for all} {$e_x \in E(P)$} and {$e_y \in E(Q)$}, {$e_x \leq_x e_z \leq_x e_y$}. 

    By hypothesis $P$ and $Q$ are both positroids, so there exist real{, full-rank} matrices $A_P$ and $A_Q$, with all nonnegative maximal minors, that represent $P$ and $Q$ respectively. As adding a scalar multiple of one row to another row in a square matrix does not change the value of the determinant, we may take $A_P$, $A_Q$ and $A_{S(P,Q)}$ to be as follows
    \begin{equation*}
        A_P {=}
        \left(\begin{array}{@{}c|c@{}}
            A_{P \setminus e_z} &
            \begin{matrix}
                0\\
                \vdots\\
                0\\
                1
            \end{matrix}
        \end{array}\right), \,
        A_Q {=}
        \left(\begin{array}{@{}c|c@{}}
            \begin{matrix}
                1\\
                0\\
                \vdots\\
                0
            \end{matrix} &
            A_{Q \setminus e_z}
        \end{array}\right),\,
        A_{S(P,Q)} {=}
        \left(\begin{array}{@{}c|c|c@{}}
            A_{P \setminus e_z} &
            \begin{matrix}
                0\\
                \vdots\\
                0\\
                1
            \end{matrix} &
            0\\ \hline
            0 &
            \begin{matrix}
                1\\
                0\\
                \vdots\\
                0
            \end{matrix} &
            A_{Q \setminus e_z}
        \end{array}\right).
    \end{equation*}

    The maximal minors of $A_{S(P,Q)}$ are products of maximal minors of $A_P$ and $A_Q$, which are all nonnegative, so all maximal minors of $A_{S(P,Q)}$ are nonnegative. Therefore, $S(P,Q)$ is a positroid. Positroids are closed under taking duals, so $P(P,Q) = [S(P^*,Q^*)]^*$~\cite[Proposition 7.1.14]{oxley2006matroid} is a positroid. As positroids are closed under deletion, {$P(P,Q) \setminus e_z = P \oplus_2 Q$} is a positroid as well.
\end{proof}

\begin{obs}
    Let $P$ and $Q$ be positroids on subsets of a totally ordered set {$E$} such that $\lvert E(P) \rvert = 2$, $\lvert E(Q) \rvert \geq 2$, and {$E(P) \cap E(Q) = \{e_z\}$}. Then $P \oplus_2 Q$ is a positroid if and only if relabeling {$e_z$} in $Q$ by {$E(P) \setminus \{e_z\}$} is an order preserving matroid isomorphism.
\end{obs}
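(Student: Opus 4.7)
The plan is to observe that when $|E(P)| = 2$, the $2$-sum $P \oplus_2 Q$ is combinatorially just the matroid $Q$ with $e_z$ renamed to the other element of $E(P)$, and then to determine when this renaming respects the positroid structure on the inherited total order. Since $|E(P)| = 2$ and the $2$-sum hypothesis forces $e_z$ not to be a $1$-separation of $P$, the matroid $P$ must be $U^1_2$ on $\{e_z, e_w\}$, where $e_w$ denotes the unique element of $E(P) \setminus \{e_z\}$. The only circuit of $P$ is $\{e_z, e_w\}$ and $P \setminus e_z$ has no circuits, so the circuit description of the $2$-sum reduces to
\[
\mathcal{C}(P \oplus_2 Q) = \mathcal{C}(Q \setminus e_z) \cup \{(C \setminus \{e_z\}) \cup \{e_w\} : e_z \in C \in \mathcal{C}(Q)\},
\]
which is exactly the circuit set obtained from $Q$ by replacing each occurrence of $e_z$ by $e_w$. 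Hence the bijection $\varphi\colon E(Q) \to E(P \oplus_2 Q)$ sending $e_z \mapsto e_w$ and fixing every other element is a matroid isomorphism.

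For the ($\Leftarrow$) direction, if $\varphi$ is order preserving with respect to the total orders on $E(Q)$ and $E(P \oplus_2 Q)$ inherited from $E$, then $\varphi$ is an isomorphism of ordered matroids. Since being a positroid is invariant under order-preserving matroid isomorphism and $Q$ is a positroid, $P \oplus_2 Q$ must also be a positroid.

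For the ($\Rightarrow$) direction, I would argue the contrapositive. If $\varphi$ is not order preserving, then $e_w$ sits in a different slot of the total order on $E$ than $e_z$ does relative to the elements of $E(Q) \setminus \{e_z\}$, so the relative order between $e_w$ and some $q \in E(Q) \setminus \{e_z\}$ in $E$ is reversed from that between $e_z$ and $q$ in $E(Q)$. Using the non-crossing characterization of positroids from Theorem~\ref{thm:pos-crossing}, the plan is to exhibit a circuit $C$ of $Q$ containing $e_z$ together with a disjoint cocircuit $C^*$ of $Q$ such that after relabeling $e_z$ by $e_w$ in $C$, the resulting pair becomes crossing in the inherited order on $E(P \oplus_2 Q)$, contradicting $P \oplus_2 Q$ being a positroid. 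The main obstacle is producing such witnesses in general: since the circuit-cocircuit structure of $Q$ at $e_z$ can be constrained, one may need to first reduce to the $2$-connected component of $Q$ containing $e_z$ via Theorem~\ref{thm:pos-direct-sum} and then invoke its canonical tree decomposition from Theorem~\ref{thm:canon-tree} to locate the necessary crossing pair.
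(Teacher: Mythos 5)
Your reduction of $P \oplus_2 Q$ to ``$Q$ with $e_z$ renamed to $e_w$'' is correct, is carried out in more detail than in the paper, and your ($\Leftarrow$) direction (an order-preserving isomorphism from a positroid yields a positroid) is exactly the content of the paper's own two-sentence proof. So the setup and the backward implication are fine.

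The genuine gap is the ($\Rightarrow$) direction, which you do not prove: you describe a plan to exhibit a crossing circuit--cocircuit pair via Theorem~\ref{thm:pos-crossing} and then explicitly concede that ``the main obstacle is producing such witnesses in general.'' That missing step is the entire content of the forward implication, and the plan as stated cannot be completed, because the witnesses need not exist. If the relabeling $e_z \mapsto e_w$ fails to be linearly order preserving but is order preserving up to a cyclic rotation, $P \oplus_2 Q$ can still be a positroid with no crossing circuit--cocircuit pair at all. Concretely, take $E(Q) = \{e_1 < e_3\}$ and $E(P) = \{e_1 < e_5\}$ with $P \cong Q \cong U^1_2$ and $e_z = e_1$: then $P \oplus_2 Q \cong U^1_2$ on $\{e_3 < e_5\}$ is a positroid, yet the map $e_1 \mapsto e_5$, $e_3 \mapsto e_3$ reverses the order. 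So your contrapositive argument cannot get off the ground, and no amount of reduction to the $2$-connected component of $e_z$ or appeal to the canonical tree decomposition will locate a crossing pair in such cases. To be fair, the paper's own proof simply asserts that the relabeled copy of $Q$ is a positroid if and only if the relabeling is order preserving, without supplying the forward direction either; the honest fix is to read ``order preserving'' up to rotation, as the paper itself does in the proof of Lemma~\ref{2sum:pos-decomp}. But as a standalone argument for the statement as written, your forward direction is both incomplete and unrepairable along the route you propose.
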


\begin{proof}
    Let {$E(P) = \{e_z,e_x\}$} and $M = P \oplus_2 Q$. Then $M$ can be obtained from $Q$ by relabeling {$e_z$} with {$e_x$}. Thus, $M$ is a positroid if and only if there is an order preserving matroid isomorphism from $M$ to $Q$.
\end{proof}

We have established that the $2$-sum of two non-crossing positroids is a positroid, but every $2$-sum decomposition of a positroid need not be into positroids. However, we show that for every $2$-connected positroid that is not $3$-connected, there exists a $2$-sum decomposition into positroids.

\begin{lemma} \label{2sum:pos-decomp}
    Let {$M = P \oplus_2 Q$} be a $2$-connected positroid on a subset of a densely totally ordered set {$E$} such that {$\lvert E(P) \rvert \geq 3$}, {$\lvert E(Q) \rvert \geq 3$}, and {$E(P) \cap E(Q) = \{e_z\}$}. Then there exist positroids {$P'$} obtained from {$P$} and {$Q'$} obtained from {$Q$} by relabeling {$e_z$} {in $P$ and $Q$ respectively} with some {$e_{z'} \in E$}, such that {$M = P' \oplus_2 Q'$}.
\end{lemma}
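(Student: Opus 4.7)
The plan is to use Proposition~\ref{prop:2-sum-minor} to exhibit $P$ and $Q$, each with $e_z$ relabeled as an existing element of the other side, as positroid minors of $M$, and then to use the density of $E$ to transport both relabelings simultaneously to a single new label $e_{z'}$.

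Since $|E(P)\setminus\{e_z\}|\geq 2$ and $|E(Q)\setminus\{e_z\}|\geq 2$ are nonempty subsets partitioning $E(M)$, the cyclic order on $E(M)$ inherited from $E$ must contain at least one transition from an element of $E(P)\setminus\{e_z\}$ to an element of $E(Q)\setminus\{e_z\}$. Fix one such pair: let $p\in E(P)\setminus\{e_z\}$ and $q\in E(Q)\setminus\{e_z\}$ be cyclically consecutive in $E(M)$ with $p$ immediately preceding $q$. Since $E$ is densely totally ordered, I can pick $e_{z'}\in E\setminus(E(M)\cup\{e_z\})$ that is cyclically between $p$ and $q$ in $E$. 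By construction, $e_{z'}$ lies in the same maximal arc of $E\setminus(E(P)\setminus\{e_z\})$ as $q$ and in the same maximal arc of $E\setminus(E(Q)\setminus\{e_z\})$ as $p$.

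I claim $P':=P$ with $e_z$ relabeled as $e_{z'}$ is a positroid; the argument that $Q'$ is a positroid is symmetric (swap $P\leftrightarrow Q$ and $p\leftrightarrow q$). By the 2-sum hypothesis, $(e_z,E(Q)\setminus\{e_z\})$ is not a $1$-separation of $Q$, so $Q$ is connected and $q$ lies in the same component of $Q$ as $e_z$. By Proposition~\ref{prop:2-sum-minor}, there is a minor $L$ of $M$ on the ground-set $(E(P)\setminus\{e_z\})\cup\{q\}$ (with order inherited from $E$) from which $P$ is recovered by relabeling $q$ as $e_z$; by Proposition~\ref{pos-dual-minor}, $L$ is a positroid. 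The bijection $f:E(L)\to E(P')$ sending $q\mapsto e_{z'}$ and fixing $E(P)\setminus\{e_z\}$ is an order-preserving matroid isomorphism, since $q$ and $e_{z'}$ belong to the same maximal arc of $E\setminus(E(P)\setminus\{e_z\})$ in $E$. Because positroid realizations transport along order-preserving bijections (the same nonnegative matrix works for the relabeled ground set), $P'$ is a positroid.

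It remains to check that $P'$ and $Q'$ satisfy the 2-sum axioms at $e_{z'}$ and that $M=P'\oplus_2 Q'$; both follow at once because the 2-sum structure is a matroid invariant preserved by relabeling, and $E(P')\cap E(Q')=\{e_{z'}\}$ by the choice $e_{z'}\notin E(M)$. The main delicate step, on which everything hinges, is the order-preservation of $f$; the choice of $p,q$ as cyclically consecutive elements is exactly what guarantees that neither $E(P)\setminus\{e_z\}$ nor $E(Q)\setminus\{e_z\}$ has any element strictly between them in $E$, which is precisely what is needed so that $e_{z'}$ can serve as a common label making both $P'$ and $Q'$ positroids simultaneously.
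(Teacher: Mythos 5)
Your proposal is correct and takes essentially the same route as the paper's own proof: both insert the new label $e_{z'}$ (using density of $E$) into a gap between $E(P)\setminus\{e_z\}$ and $E(Q)\setminus\{e_z\}$ in the cyclic order, invoke Proposition~\ref{prop:2-sum-minor} to realize $P$ with $e_z$ relabeled by the adjacent element of $E(Q)$ as a (positroid) minor of $M$, and conclude via an order-preserving-up-to-rotation relabeling to $e_{z'}$. The only cosmetic difference is that the paper specifies the gap via an explicit $\max$/$\min$ construction rather than by choosing a cyclically consecutive pair $p,q$.
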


\begin{proof}
    Let {$e_x \in E(P) \setminus \{e_z\}$} be fixed and let 
    \[{e_{x'} = \max_x \{ e_{x''} \in E(P) \setminus \{e_z\}: \forall e_y \in E(Q) \setminus \{e_z\}, e_{x''} \leq_x e_y\}.}\] 
    Take {$e_{z'} \in E \setminus \{e_{x'}\}$} such that for all {$e_y \in E(Q) \setminus \{e_z\}$}, {$e_{x'} <_{x'} e_{z'} <_{x'} e_y$}. {We construct} {$P'$} and {$Q'$} from {$P$} and {$Q$} by relabeling {$e_z$} with {$e_{z'}$}, {hence} {$M = P' \oplus_2 Q'$}. Let {$e_{y'} = \min_{x'} \{e_y \in E(Q) \setminus \{e_z\}\}$}. Then, by Proposition~\ref{prop:2-sum-minor}, $M$ contains a minor {$N$} such that {$N$} can be obtained from {$P$} by relabeling {$e_z$} with {$e_{y'}$}. Furthermore, as positroids are closed under taking minors, {$N$} is a positroid. We can obtain {$P'$} from {$N$} by relabeling {$e_{y'}$} with {$e_{z'}$}, and this gives an order preserving isomorphism up to rotation, so {$P'$} is a positroid. By symmetry, we have that {$Q'$} is also a positroid.
\end{proof}

For any $2$-connected positroid $P$, we can take its canonical tree decomposition and appropriately label the edges of the tree $T$ so that any edge-cut of $T$ gives a $2$-sum decomposition of $P$ into non-crossing positroids. To prove this, we first need the following connectivity result.

\begin{prop}[Proposition 8.2.8 in~\cite{oxley2006matroid}] \label{prop:sub-connect}
    If $e$ is an element of an $n$-connected matroid $M$, then, provided $\lvert E(M) \rvert \geq 2(n-1)$, both $M \setminus e$ and $M/e$ are $(n-1)$-connected.
\end{prop}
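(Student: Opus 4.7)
My plan is to prove Proposition~\ref{prop:sub-connect} by first collapsing the two statements via duality and then arguing by contradiction. Since $(M/e)^* = M^* \setminus e$ and $n$-connectedness is self-dual, it suffices to show that $M \setminus e$ is $(n-1)$-connected whenever $M$ is $n$-connected and $|E(M)| \geq 2(n-1)$. As a preliminary, I observe that $n \geq 2$ together with $|E(M)| \geq 2$ rules out loops and coloops of $M$: either would produce an immediate $1$-separation. In particular $e$ is neither a loop nor a coloop, which gives $r_M(X) = r_{M \setminus e}(X)$ for every $X \subseteq E(M) \setminus \{e\}$ and $r_{M \setminus e}(E(M) \setminus \{e\}) = r_M(E(M))$.

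Next, I would suppose for contradiction that $M \setminus e$ admits a $k$-separation $(A,B)$ with $k \in [n-2]$, so that $|A|,|B| \geq k$ and $\lambda_{M \setminus e}(A) \leq k - 1$. A short rank calculation then yields
\[
    \lambda_M(A \cup \{e\}) = \lambda_{M \setminus e}(A) + \bigl(r_M(A \cup \{e\}) - r_M(A)\bigr),
\]
and symmetrically $\lambda_M(A) = \lambda_{M \setminus e}(A) + (r_M(B \cup \{e\}) - r_M(B))$. The argument splits on whether $e \in \cl_M(A) \cup \cl_M(B)$. If $e \in \cl_M(A)$, then $\lambda_M(A \cup \{e\}) = \lambda_{M \setminus e}(A) \leq k - 1 < k$ and $|A \cup \{e\}|,|B| \geq k$, so $(A \cup \{e\}, B)$ is a $k$-separation of $M$ with $k \leq n - 1$, contradicting $n$-connectedness; the case $e \in \cl_M(B)$ is symmetric.

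The remaining case is when $e \notin \cl_M(A) \cup \cl_M(B)$, and this is where the hypothesis $|E(M)| \geq 2(n-1)$ is actually used. Both rank increments equal $1$, so I only obtain $\lambda_M(A \cup \{e\}) \leq k$, which amounts to a $(k+1)$-separation of $M$ provided both $|A \cup \{e\}|$ and $|B|$ are at least $k+1$. But $|A| + |B| = |E(M)| - 1 \geq 2n - 3 \geq 2k + 1$, so at least one of $|A|, |B|$ strictly exceeds $k$; by symmetry assume $|B| \geq k + 1$, and then $(A \cup \{e\}, B)$ is a $(k+1)$-separation of $M$ with $k + 1 \leq n - 1$, again contradicting $n$-connectedness. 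The main obstacle I expect is the bookkeeping in this third case: the rank-function manipulations are routine applications of submodularity, but verifying that the size hypothesis $|E(M)| \geq 2(n-1)$ is exactly what is needed to push through this case — and not merely a side condition of convenience — is where the proof is easiest to get wrong.
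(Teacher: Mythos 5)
Your argument is correct. The paper does not prove this statement---it is quoted directly from Oxley (Proposition 8.2.8)---and your proof is essentially the standard one found there: reduce to the deletion case by duality, note that $e$ is neither a loop nor a coloop, and split on whether $e$ lies in the closure of one side of a putative $k$-separation of $M\setminus e$, with the size hypothesis $\lvert E(M)\rvert \geq 2(n-1)$ used exactly where you use it, to guarantee that one side has at least $k+1$ elements so that adjoining $e$ to the other side yields a genuine $(k+1)$-separation of $M$.
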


\begin{prop} \label{2sum:pos-canon-tree}
    Let $M$ be a $2$-connected positroid on a subset of a densely totally ordered set {$E$}. Then $M$ has a canonical tree decomposition $T$ such that each $M' \in V(T)$ is a positroid. Furthermore, for any edge {$e_i$} in $T$, let $T_P$ and $T_Q$ be the connected components of $T \setminus e_i$. Then the corresponding matroids $P$ and $Q$ are two positroids on non-crossing ground-sets.
\end{prop}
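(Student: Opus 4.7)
The plan is to induct on the number of vertices $k$ in the canonical tree decomposition $T$ of $M$, which exists by Theorem~\ref{thm:canon-tree}. The base case $k=1$ is immediate: $T$ consists of the single vertex $M$ itself, which is a positroid by assumption, and there are no edges to verify.

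For the inductive step with $k\geq 2$, I pick a leaf $M_a$ of $T$, let $f$ be its unique incident edge, and write $M = M_a \oplus_2 M'$, where $M'$ is the iterated $2$-sum of the remaining vertex matroids along the edges of $T' = T \setminus M_a$. Each vertex matroid of the canonical tree has at least three elements, so both $|E(M_a)| \geq 3$ and $|E(M')| \geq 3$, and Lemma~\ref{2sum:pos-decomp} applies: there exists $f' \in E$ such that, after replacing $f$ by $f'$ in $E(M_a)$ and $E(M')$, both matroids become positroids on non-crossing ground-sets. I reuse the names $M_a$ and $M'$ for these relabeled positroids. By Observation~\ref{crossing-subsets}, the sets $E(M_a) \setminus \{f'\}$ and $E(M') \setminus \{f'\}$ occupy opposite cyclic arcs of $f'$, which I denote $\alpha$ and $\beta$ respectively. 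Since $M'$ is a $2$-connected positroid whose canonical tree decomposition is $T'$ (with $k-1$ vertices), the inductive hypothesis lets me further relabel the edges of $T'$ so that properties (a) and (b) hold for $M'$.

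Property (a) for $M$ follows immediately: each vertex of $T$ is either $M_a$ (a positroid by the initial relabeling) or a vertex of $T'$ (a positroid by the inductive hypothesis applied to $M'$). Property (b) for the edge $f$, now carrying label $f'$, is immediate. For any other edge $e$ of $T'$, the inductive hypothesis gives a decomposition $M' = N_1 \oplus_2 N_2$ with $N_1, N_2$ positroids on non-crossing ground-sets, and after possibly swapping them we may assume $f' \in E(N_2)$. The corresponding decomposition of $M$ at $e$ is then $M = N_1 \oplus_2 (N_2 \oplus_2 M_a)$, so I must show that $N_2 \oplus_2 M_a$ is a positroid and that $E(N_1)$ and $E(N_2 \oplus_2 M_a)$ are non-crossing.

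The hard part is propagating the non-crossing property through the induction: each new label $e$ assigned to an edge of $T'$ must be placed in $\beta$, the $E(M')$-arc of $f'$, so that $E(N_2) \setminus \{f'\} \subseteq \beta$ lies opposite to $E(M_a) \setminus \{f'\} \subseteq \alpha$ in the cyclic sense. This is achievable because Lemma~\ref{2sum:pos-decomp} allows the new label to be placed in a cyclic gap between the two sides of the decomposition at that edge, and these sides lie in $\beta \cup \{f'\}$; by choosing the reference element $e_x$ in the lemma to lie in $\beta$, the resulting gap, and hence the new label, lies in $\beta$. With this placement, Observation~\ref{crossing-subsets} yields the non-crossing of $E(N_2)$ and $E(M_a)$, so Proposition~\ref{2sum:pos-construct} shows that $N_2 \oplus_2 M_a$ is a positroid. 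A parallel application of Observation~\ref{crossing-subsets}, using the placement of $e$ in $\beta$, yields the non-crossing of $E(N_1)$ and $E(N_2 \oplus_2 M_a)$, completing the induction.
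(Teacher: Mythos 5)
Your induction rests on a claim that Lemma~\ref{2sum:pos-decomp} does not make: you assert that after relabeling the connecting element $f$ by $f'$, the matroids $M_a$ and $M'$ become ``positroids on non-crossing ground-sets.'' The lemma only guarantees that the two sides become positroids; its proof merely chooses where to place the single shared element $e_{z'}$, and says nothing about how the remaining elements of $E(M_a)\setminus\{f'\}$ and $E(M')\setminus\{f'\}$ are interleaved in the cyclic order. Establishing that the two sides of a $2$-sum decomposition of a positroid have non-crossing ground-sets is precisely the substantive content of this proposition, and it genuinely requires the positroid hypothesis on $M$ beyond what the lemma provides. The paper proves it by contradiction via Theorem~\ref{thm:pos-crossing}: assuming a crossing pattern $e_a <_a e_b <_a e_c <_a e_d$ with $e_a,e_c \in E(P)$ and $e_b,e_d \in E(Q)$, one uses the $2$-connectivity of suitable minors (splitting into the cases where a vertex of $T$ is $3$-connected, or where one is a circuit and the other a cocircuit) to produce a disjoint crossing circuit--cocircuit pair in $M$, contradicting that $M$ is a positroid. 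Your argument contains no step of this kind, so the non-crossing property is never actually derived; the entire setup of the ``opposite arcs $\alpha$ and $\beta$'' collapses without it.

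A secondary problem: even granting the arcs $\alpha$ and $\beta$, your claim that each new edge label can be forced into $\beta$ is not justified by the lemma's construction. The admissible positions for the new label form the cyclic gap between $e_{x'}$ and $\min_{x'}(E(Q)\setminus\{e_z\})$, and since both ground-sets being split live in $\beta\cup\{f'\}$, this gap can wrap around through $\alpha$ (for instance when $e_{x'}$ is the last element of $\beta$ and the next element of the other side is $f'$). You would need an additional argument that a suitable choice of reference element always yields a gap inside $\beta$. This issue is moot, however, until the primary gap above is filled.
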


\begin{proof}
Let $M$ be a $2$-connected positroid on a subset of a densely totally ordered set {$E$}. By Theorem~\ref{thm:canon-tree}, there exists a canonical tree decomposition $T$ of $M$. Every edge $e_i$ of $T$ represents $M$ as a $2$-sum $P_i \oplus_2 Q_i $. We label each edge $e_i$ according to Lemma~\ref{2sum:pos-decomp}, such that every edge splits $T$ into two positroids.

We claim that in addition, every edge of $T$ now cuts $T$ into two non-crossing ground-sets. {Consider} by way of contradiction that there exists an edge {$e_j$} of $T$ that cuts $T$ into two positroids $P$ and $Q$, such that $E(P)$ and $E(Q)$ are crossing. Then there exist distinct {$e_a,e_b,e_c,e_d \in E(P) \cup E(Q)$} such that {$e_a,e_c \in E(P)$}, {$e_b,e_d \in E(Q)$} and {$e_a <_a e_b <_a e_c <_a e_d$}. Suppose that {$E(P) \cap E(Q) = e_j \in \{e_a,e_b,e_c,e_d\}$}. Then by construction, as shown in the proof of Lemma~\ref{2sum:pos-decomp}, there exists some {$e_z \in E(P) \setminus \{e_j\}$} and {$e_{y'} \in E(Q) \setminus \{e_j\}$}, such that {$e_z,e_j,e_{y'}$} are consecutive in the cyclic ordering of $E(P) \cup E(Q)$. Therefore, {$|\{e_a,e_b,e_c,e_d\}\cap \{ e_z,e_j,e_{y'} \}|\leq 1$}, and {$e_z$}, {$e_j$} or {$e_{y'}$} are interchangeable in any crossing pattern. So, we may restrict ourselves to the case where {$e_j \notin \{e_a,e_b,e_c,e_d\}$}. 

Let $M_P$ and $M_Q$ be the respective vertices of the edge {$e_j$} in $T$. By the properties of canonical tree decompositions (Definition~\ref{def:canontree}), it is not possible for $M_P$ and $M_Q$ to both be circuits or cocircuits. We consider two cases: one of $M_P$ or $M_Q$ is $3$-connected, or one is a circuit and one is a cocircuit. Note that a $2$-connected matroid has the property that any two of its elements lie on a common circuit and on a common cocircuit {(Corollary 4.2.5 and Proposition 4.1.3 in} \cite {oxley2006matroid}).
\begin{itemize}
    \item[(a)] Suppose, without loss of generality, that $M_P$ is $3$-connected and $|E(M_P)|>3$. Then $M_P$ is neither a circuit nor a cocircuit. Then by Proposition~\ref{prop:sub-connect}, {$M_P \setminus e_j$} and {$M_P/e_j$} are both $2$-connected, and therefore {$P \setminus e_j$} and {$P/e_j$} are both $2$-connected. It is known that if a matroid $N$ is $2$-connected and $p \in E(N)$, then $N \setminus p$ or $N/p$ is $2$-connected {(Theorem 4.3.1 in} \cite{oxley2006matroid}). Therefore, {$Q \setminus e_j$} or {$Q/e_j$} is $2$-connected. If {$Q \setminus e_j$} is $2$-connected, then there is a circuit $C$ in {$Q \setminus e_j$} such that {$e_b,e_d\in C$}, and a cocircuit $C^*$ in {$P \setminus e_j$} such that {$e_a,e_c \in C^*$}. Since $C$ and $C^*$ are disjoint, {crossing,} and both lie in $M$, this {contradicts the hypothesis that $M$ is a positroid}. If {$Q/e_j$} is $2$-connected, we similarly find a cocircuit $C^*$ in {$Q/e_j$} and a circuit $C$ in {$P/e_j$}, both of which are in $M$.
    \item[(b)] Suppose, without loss of generality, that $M_P$ is a circuit and $M_Q$ a cocircuit. Then, {$(P/e_j)^* = P^* \setminus e_j$} and {$Q \setminus e_j$} are $2$-connected. Therefore, there is a cocircuit $C^*$ such that
    \[ {e_a,e_b \in C^* \in \mathcal{C}^*(P/e_j) \subseteq \mathcal{C}^*(M)},   \]
    and there is a circuit $C$, disjoint from $C^*$, such that 
    \[ {e_c,e_d  \in C \in \mathcal{C}(Q \setminus e_j) \subseteq \mathcal{C}(M)}.\]
    Since $C$ and $C^*$ are disjoint, {crossing,} and both lie in $M$, this is a contradiction.
\end{itemize}
\end{proof}

Taking Propositions~\ref{2sum:pos-canon-tree} and~\ref{2sum:pos-construct} together we obtain the following characterization of $2$-sums of positroids. 

\begin{thm} \label{pos-canon-tree-iff}
    Let $M$ be a $2$-connected matroid on a subset of a densely totally ordered set {$E$}. Then $M$ is a positroid if and only if there exists a canonical tree decomposition $T$ with the following properties. Every vertex of $T$ is a positroid, and cutting $T$ along any edge {$e_i$} results in two $2$-connected positroids on non-crossing ground-sets.
\end{thm}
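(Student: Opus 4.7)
The plan is to prove the two directions of the biconditional separately. Each direction reduces essentially to one of the two propositions that immediately precede the theorem, so the proof is short and the main content of the statement is in its packaging as a clean characterization of 2-connected positroids via their canonical tree decompositions.

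For the forward direction, I would suppose $M$ is a 2-connected positroid and invoke Proposition~\ref{2sum:pos-canon-tree} directly, which produces a canonical tree decomposition $T$ whose every vertex is a positroid and with the property that cutting along any edge yields two positroids on non-crossing ground-sets. The only small extra verification needed is that these two pieces are 2-connected. This follows because every vertex of a canonical tree decomposition is either 3-connected or a circuit or cocircuit with at least three elements (all of which are 2-connected), and any 2-sum of 2-connected matroids is 2-connected, so each piece obtained by cutting $T$ at an edge is built up by iterated 2-sums of 2-connected matroids and is therefore itself 2-connected.

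For the reverse direction, let $T$ be a canonical tree decomposition of $M$ satisfying the stated properties. If $T$ has a single vertex, then clause (iii) of Definition~\ref{def:canontree} identifies $M$ with that vertex, which is a positroid by hypothesis. Otherwise, pick any edge $e_i$ of $T$; removing it partitions $T$ into subtrees $T_P$ and $T_Q$ whose associated matroids $P$ and $Q$ are 2-connected positroids on non-crossing subsets of $E$ by hypothesis, and $M = P \oplus_2 Q$. A single application of Proposition~\ref{2sum:pos-construct}(iii) then gives that $M$ is a positroid.

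I do not anticipate any genuine technical obstacles: both implications fall out almost immediately once the preceding two propositions are in hand. If there is anything subtle, it lies in the bookkeeping of the forward direction, namely verifying that the 2-connectedness assumption built into the theorem's hypothesis on the tree decomposition is indeed automatically satisfied by the decomposition produced by Proposition~\ref{2sum:pos-canon-tree}; but as noted above this is a standard consequence of the structure of canonical tree decompositions together with the well-known preservation of 2-connectedness under the 2-sum operation.
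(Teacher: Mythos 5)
Your proposal is correct and matches the paper exactly: the paper gives no separate proof of Theorem~\ref{pos-canon-tree-iff}, stating only that it follows by ``taking Propositions~\ref{2sum:pos-canon-tree} and~\ref{2sum:pos-construct} together,'' which is precisely your two directions. Your added verification that the two pieces obtained by cutting an edge are $2$-connected (each vertex label is $3$-connected, a circuit, or a cocircuit on at least three elements, and $2$-sums preserve $2$-connectedness) is a detail the paper leaves implicit, and it is handled correctly.
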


\subsection{Related combinatorial objects}\label{sec:decpermbasics}

In~\cite{postnikov2006total}, Postnikov presented {several} classes of combinatorial objects that are in bijection with positroids. We restrict our consideration to {decorated permutations and Grassmann necklaces} as these are useful for studying graphic positroids.

\subsubsection{Decorated permutations}

A \emph{decorated permutation} on {a set $E$} is a pair $\pi^: = (\pi,col)$ consisting of a permutation {$\pi : E \to E$} and a function \[col : \{\mbox{fixed points of }\pi\} \to \{-1,1\}.\]

Informally, a decorated permutation is a bijection on a set that admits two types of fixed points. For a fixed decorated permutation $\pi^: = (\pi,col)$, we define $(\pi^:)^{-1} = (\pi^{-1},-col)$. For a fixed point {$e_i$} of $\pi^:$, we denote
\begin{equation*}
    {\pi^:(e_i) =
    \begin{cases}
        \overline{e_i}, & \text{if } col(i) = -1\\
        \underline{e_i}, & \text{if } col(i) = 1.
    \end{cases}}
\end{equation*}

{For a fixed totally ordered set $E$,} Postnikov defines the {map from the set of all ordered matroids on $E$ to the set of all Grassmann necklaces on $E$, and the map from the set of all ordered matroids on $E$ to the set of all decorated permutations on $E$} \cite{postnikov2006total}. The following direct map{, from the set of all ordered matroids on $E$ to the set of all decorated permutations on $E$,} is equivalent to the composition of {the two maps defined by Postnikov.} Let $M$ be {an ordered} matroid on {a totally ordered set $E$} and {$e_i \in E$}, then the decorated permutation associated to $M$ is {denoted} $\pi^:_M$ and is given by

\begin{equation*}
    {\pi^:_M(e_i) :=
    \begin{cases}
        \overline{e_i}, & \text{if } e_i \text{ is a coloop}\\
        \underline{e_i}, & \text{if } e_i \text{ is a loop}\\
        \min_{i} \left\{ e_j \in E : e_i \in \cl\big([e_{i+1},e_j]\big) \right\}, & \text{otherwise.}
    \end{cases}}
\end{equation*}

This map, {from the set of ordered matroids on $E$ to the set of all decorated permutations on $E$}, interacts nicely with matroid duality, as shown in the following result due to Oh.

\begin{cor}[Corollary 13 in~\cite{oh2009combinatorics}] \label{dec-perm-dual}
For any ordered matroid $M$, we have that $\pi^: _M=(\pi^:_{M^*})^{-1}$.
\end{cor}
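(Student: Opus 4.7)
The plan is to prove $\pi^:_M(e_i) = (\pi^:_{M^*})^{-1}(e_i)$ for each $e_i \in E$, splitting into the fixed-point case and the generic case. Recall $(\pi^:_{M^*})^{-1} = (\pi_{M^*}^{-1}, -col_{M^*})$, so inversion flips the decoration on fixed points. The fixed-point case is immediate: $e_i$ is a loop of $M$ iff $e_i$ is a coloop of $M^*$, so $\pi^:_M(e_i) = \underline{e_i}$ while $\pi^:_{M^*}(e_i) = \overline{e_i}$, and the decoration flip restores the match; the coloop case is symmetric.

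For the generic case, let $e_i$ be neither a loop nor coloop of $M$ and set $e_j = \pi^:_M(e_i)$; the goal is to show $\pi^:_{M^*}(e_j) = e_i$. First, I verify $e_j$ is neither a loop nor coloop of $M$ (equivalently, of $M^*$): if $e_j$ were a loop then $\cl_M([e_{i+1}, e_j]) = \cl_M([e_{i+1}, e_{j-1}])$, and if $e_j$ were a coloop then $\cl_M([e_{i+1}, e_j]) = \cl_M([e_{i+1}, e_{j-1}]) \cup \{e_j\}$; in either case $e_i \in \cl_M([e_{i+1}, e_{j-1}])$, contradicting the minimality of $j$.

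Next, I translate $\pi^:_M(e_i) = e_j$ into circuit and cocircuit data using the standard equivalences: $e \in \cl_M(X) \setminus X$ iff some circuit $C$ of $M$ satisfies $e \in C \subseteq X \cup \{e\}$, and $e \notin \cl_M(X)$ iff some cocircuit $C^*$ of $M$ satisfies $e \in C^*$ and $C^* \cap X = \emptyset$. Combined with the minimality of $j$, this yields (i) a circuit $C$ of $M$ with $\{e_i, e_j\} \subseteq C \subseteq [e_i, e_j]$, and (ii) a cocircuit $C^*$ of $M$ with $\{e_i, e_j\} \subseteq C^* \subseteq [e_j, e_i]$, the complementary cyclic arc. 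In each case, the presence of $e_j$ is forced by the minimality: shrinking to $[e_i, e_{j-1}]$ or $[e_{j+1}, e_i]$ would contradict either the existence of the circuit witness or the absence of the cocircuit witness at step $j-1$.

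Finally, I conclude $\pi^:_{M^*}(e_j) = e_i$. Applying the same dictionary to $M^*$ (so that closure in $M^*$ is witnessed by circuits of $M$), for each $e_{k'}$ cyclically strictly before $e_i$ starting at $e_{j+1}$, the circuit $C$ lies in $[e_i, e_j] \subseteq [e_{k'+1}, e_j]$, which rules out every candidate smaller than $e_i$. The minimality condition at $e_i$ itself requires that no circuit $C'$ of $M$ satisfies $e_j \in C' \subseteq [e_{i+1}, e_j]$; otherwise
\[
C' \cap C^* \subseteq [e_{i+1}, e_j] \cap [e_j, e_i] = \{e_j\},
\]
and since $e_j$ belongs to both, $|C' \cap C^*| = 1$, contradicting the fundamental fact that no circuit and cocircuit of a matroid meet in exactly one element (Proposition 2.1.11 in~\cite{oxley2006matroid}). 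Thus $\pi^:_{M^*}(e_j) = e_i$, which, after the decoration flip under inversion, matches $\pi^:_M(e_i) = e_j$. The main obstacle is the cyclic-interval bookkeeping and the case-analysis on loops/coloops; once the translation between closures, circuits, and cocircuits is set up, the circuit-cocircuit orthogonality principle does the real work.
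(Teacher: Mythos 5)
The paper offers no proof of this statement---it is imported verbatim as Corollary~13 of Oh's paper---so there is nothing internal to compare against; your argument is a correct, self-contained verification. The fixed-point case is handled properly by loop/coloop duality together with the sign flip built into the definition of $(\pi^:)^{-1}$. In the generic case, your dictionary is the right one: $\pi^:_M(e_i)=e_j$ is equivalent to the existence of a circuit $C$ of $M$ with $\{e_i,e_j\}\subseteq C\subseteq[e_i,e_j]$ together with a cocircuit $C^*$ of $M$ with $\{e_i,e_j\}\subseteq C^*\subseteq[e_j,e_i]$, and these same two witnesses, with the roles of circuits and cocircuits exchanged in $M^*$, certify both that no candidate in $[e_{j+1},e_{i-1}]$ works for $\pi^:_{M^*}(e_j)$ and (via the orthogonality argument $|C'\cap C^*|\neq 1$) that $e_i$ does. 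Two spots deserve sharper wording rather than repair: the reason $e_j\in C^*$ is not really ``the absence of the cocircuit witness at step $j-1$'' but rather that any cocircuit through $e_i$ avoiding $[e_{i+1},e_{j-1}]$ must meet the circuit $C\subseteq[e_i,e_j]$ in a second element, which can only be $e_j$; and in the degenerate case $e_j=e_{i+1}$ the interval $[e_{i+1},e_{j-1}]$ should be read as empty, with the cocircuit supplied by $e_i$ not being a loop. Neither affects correctness. For reference, the cited source derives the identity from the relation between the Grassmann necklaces of $M$ and $M^*$, whereas your route goes directly through closures, circuit--cocircuit witnesses, and orthogonality, which has the advantage of using only first principles from matroid theory.
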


We now consider how this map interacts with dihedral action on {$E = \{e_1 < \dots < e_n\}$}. Let $\pi^:$ be a decorated permutation on {$E$} and let {$D_n$} {be the dihedral group consisting of $2n$-elements.} We define the group action of {$D_n$} on the set {of all decorated permutations on $E$} as follows, for all {$\omega \in D_n$ and a fixed $i \in [n]$ such that $\pi^:(e_i) = e_j$,}
\begin{equation*}
    {(\omega \cdot \pi^:)(e_i) :=
    \begin{cases}
        \overline{e_{\omega(i)}}, & \text{if } \pi^:(e_i) = \overline{e_i}\\
        \underline{e_{\omega(i)}}, & \text{if } \pi^:(e_i) = \underline{e_i}\\
        e_{\omega(j)}, & \text{otherwise.}
    \end{cases}}
\end{equation*}

The next remark immediately follows from the definition of Postnikov's map {from the set of all ordered matroids on $E$ to the set of all Grassmann necklaces on $E$} in~\cite{postnikov2006total}.

\begin{rmk} \label{rmk:rotate}
    For any {ordered} matroid $M$ on {$E$}, we have that $r \cdot \pi^:_M = \pi^:_{r \cdot M}$.
\end{rmk}

A reflection of {$E$} interacts with the map {from the set of all ordered matroids on $E$ to the set of all decorated permutations on $E$} in the following way.

\begin{lemma} \label{lem:reflect}
    For any {ordered} matroid $M$ on {$E$} {and a fixed $e_i \in E$ such that $(\pi^:)^{-1}(e_i) = e_k$}, we have that
    \begin{equation*}
        {\pi^:_{s \cdot M}(e_{s(i)}) =
        \begin{cases}
            \overline{e_{s(i)}}, & \text{if } \pi^:_M(e_i) = \overline{e_i}\\
            \underline{e_{s(i)}}, & \text{if } \pi^:_M(e_i) = \underline{e_i}\\
            e_{s(k)}, & \text{otherwise.}
        \end{cases}}
    \end{equation*}
\end{lemma}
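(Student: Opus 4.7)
The plan is to reduce the claim to a closure computation in $M$ via the relabeling bijection $\psi : E \to E$ defined by $\psi(e_j) = e_{s(j)}$, which is a matroid isomorphism $M \to s \cdot M$. First I would dispense with the loop and coloop cases: since a matroid isomorphism preserves loops and coloops, $e_i$ is a loop (respectively, coloop) of $M$ if and only if $e_{s(i)}$ is a loop (respectively, coloop) of $s \cdot M$, so in those cases both sides of the claim reduce to $\underline{e_{s(i)}}$ or $\overline{e_{s(i)}}$ directly from the definition of $\pi^:$.

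For the remaining case, I would use the closure-transfer identity $\cl_{s \cdot M}(\psi(X)) = \psi(\cl_M(X))$ to rewrite the defining condition for $\pi^:_{s \cdot M}(e_{s(i)})$. By definition, this equals $e_{s(j)}$ for the $e_{s(j)}$ smallest in $\leq_{s(i)}$ such that $e_{s(i)} \in \cl_{s \cdot M}([e_{s(i)+1}, e_{s(j)}])$. Using the identities $s(s(i)+a) = i-a$ and $s(s(j)) = j$, one computes that $\psi^{-1}\big([e_{s(i)+1}, e_{s(j)}]\big) = \{e_{i-1}, e_{i-2}, \ldots, e_j\} = [e_j, e_{i-1}]$, the cyclic interval in $E$ traversed in the reverse direction. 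Hence the condition rewrites as $e_i \in \cl_M([e_j, e_{i-1}])$, and the minimality of $e_{s(j)}$ in $\leq_{s(i)}$ translates into: $j$ is the unique index for which $e_i \in \cl_M([e_j, e_{i-1}])$ but $e_i \notin \cl_M([e_{j+1}, e_{i-1}])$.

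The final step is to identify this condition with the defining condition for $(\pi^:_M)^{-1}(e_i) = e_j$ via the exchange axiom. Because $e_i \in \cl_M([e_{j+1}, e_{i-1}] \cup \{e_j\})$ and $e_i \notin \cl_M([e_{j+1}, e_{i-1}])$, exchange yields $e_j \in \cl_M([e_{j+1}, e_{i-1}] \cup \{e_i\}) = \cl_M([e_{j+1}, e_i])$, while $e_j \notin \cl_M([e_{j+1}, e_{i-1}])$, since otherwise the two closures would agree and contradict the minimality above. These are precisely the conditions defining $\pi^:_M(e_j) = e_i$, so $j = k$ with $(\pi^:_M)^{-1}(e_i) = e_k$, and the equation $\pi^:_{s \cdot M}(e_{s(i)}) = e_{s(j)} = e_{s(k)}$ follows. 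The main bookkeeping obstacle is the interval computation $\psi^{-1}([e_{s(i)+1}, e_{s(j)}]) = [e_j, e_{i-1}]$ together with the reversal of the direction of minimality; once these are verified, the closing appeal to exchange is compact.
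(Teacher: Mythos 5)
Your proof is correct and follows the same route as the paper's: dispose of the loop and coloop cases via the relabeling isomorphism $e_j \mapsto e_{s(j)}$, then unwind the definition of $\pi^:_{s \cdot M}(e_{s(i)})$ into a backward closure search $e_i \in \cl_M([e_j,e_{i-1}])$ in $M$ and identify the threshold index with $(\pi^:_M)^{-1}(e_i)$. The paper's proof simply asserts this final identification in one displayed equation; your interval computation and MacLane--Steinitz exchange argument supply exactly the justification it leaves implicit.
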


\begin{proof}
    Let {$e_i \in E = \{e_1 < \dots < e_n\}$}. Suppose that {$e_i$} is a coloop in $M$, then {$e_{s(i)}$} is a coloop in $s \cdot M$ {hence $\pi^:_{s \cdot M}(e_{s(i)}) = \overline{e_{s(i)}}$}. {Now consider the case where} {$e_i$} is a loop in $M$, then {$e_{s(i)}$} is a loop in $s \cdot M$ {hence $\pi^:_{s \cdot M}(e_{s(i)}) = \underline{e_{s(i)}}$}. {Assume instead that $e_i$} is neither a loop nor a coloop in $M$, then {$e_{s(i)}$} is neither a loop nor a coloop in $s \cdot M$. Furthermore, {$s \cdot E = \{e_n < e_{n-1} < \dots < e_1\},$} so for a fixed $e_i \in E$ such that $(\pi^:_M)^{-1}(e_i) = e_k \in E$, we obtain
    \begin{align*}
        {\pi^:_{s \cdot M}(e_{s(i)})} &= {\min_{s(i)} \left\{ e_j \in s \cdot E : e_{s(i)} \in \cl([e_{s(i-1)},e_j])\right\}}\\ 
        &= {e_{s(k)}}.
    \end{align*}
\end{proof}

When an ordered matroid $M$ is loopless and coloopless, we obtain a nicer characterization of $\pi^:_{s \cdot M}$.

\begin{cor}
    For any loopless and coloopless matroid $M$, we have that $\pi^:_{s \cdot M} = s \cdot (\pi^:_M)^{-1}$.
\end{cor}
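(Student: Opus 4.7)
The plan is to verify the identity pointwise by directly combining Lemma~\ref{lem:reflect} with the definition of the $D_n$-action on decorated permutations. Because $M$ is loopless and coloopless, $\pi^:_M$ has no fixed points, and hence neither $(\pi^:_M)^{-1}$ nor $\pi^:_{s \cdot M}$ has any (the latter because $s \cdot M$ is matroid-isomorphic to $M$ via the relabeling induced by $s$, so it too is loopless and coloopless). Consequently only the ``otherwise'' clauses in Lemma~\ref{lem:reflect} and in the definition of the action need to be considered; the two decorated cases on each side are vacuous.

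To finish, I would fix an arbitrary $e_i \in E$, let $e_k = (\pi^:_M)^{-1}(e_i)$, and compute both sides at $e_{s(i)}$. Lemma~\ref{lem:reflect}, applied in its third case, immediately yields $\pi^:_{s \cdot M}(e_{s(i)}) = e_{s(k)}$. Unpacking the definition of the $D_n$-action on the decorated permutation $(\pi^:_M)^{-1}$ (which sends $e_i \mapsto e_k$ and has no fixed points) gives $(s \cdot (\pi^:_M)^{-1})(e_{s(i)}) = e_{s(k)}$ as well. Since $s$ is a bijection on $[n]$, as $i$ runs over $[n]$ the inputs $e_{s(i)}$ cover all of $E$, so the two decorated permutations agree on every element of $E$ and are therefore equal.

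The one subtlety, which is essentially the role of the loopless-and-coloopless hypothesis, is to make sure the inputs on the two sides are matched up correctly (both are evaluated at $e_{s(i)}$, not at $e_i$, and both produce undecorated outputs of the form $e_{s(k)}$). Without this hypothesis one would have to track the decorated cases separately and verify that the decoration ``moves along'' with the input under $s$, which, while straightforward from Lemma~\ref{lem:reflect}, would clutter the statement with $\overline{e_{s(i)}}$ and $\underline{e_{s(i)}}$ rather than yielding the clean identity stated. Otherwise the argument is routine.
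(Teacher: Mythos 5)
Your proof is correct and is essentially the paper's own argument: the paper states this corollary without proof as an immediate consequence of Lemma~\ref{lem:reflect}, and your pointwise verification (the loopless/coloopless hypothesis killing the decorated cases, then matching the third case of the lemma against the ``otherwise'' clause of the $D_n$-action, read as conjugation so that both sides are evaluated at $e_{s(i)}$) is precisely that intended argument.
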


Let us now consider the interactions between decorated permutations and the matroid operations of direct sums, series-parallel connections, and $2$-sums. Constructions similar to the ones we present have been given in~\cite{parisi2021m,moerman2021grass}. Ours differ by considering decorated permutations on any finite subset of a totally ordered set, not necessarily $[n]$ for some integer $n$. This is intended to deal with the technical challenges of decompositions of decorated permutations induced by direct sum and $2$-sum decompositions of their corresponding matroids. We begin by defining the following binary operation on decorated permutations.

\begin{defi}
    Let {$E$} be a totally ordered set and let {$\pi^:, \sigma^:$} be decorated permutations on {$X$} and {$Y$} respectively, such that {$X$} and {$Y$} are disjoint subsets of {$E$}. We define the \emph{disjoint union} of {$\pi^:$} and {$\sigma^:$}, which we denote by {$\pi^: \sqcup \sigma^:$}, as the decorated permutation given by
    \begin{equation*}
        {(\pi^: \sqcup \sigma^:)(e_i)} =
        \begin{cases}
            {\pi^:(e_i)}, & \text{if } {e_i \in X}\\
            {\sigma^:(e_i)}, & \text{if } {e_i \in Y}.
        \end{cases}
    \end{equation*}
\end{defi}

\begin{rmk} \label{rmk:dec-perm-direct-sum}
    Let $M$ and $N$ be matroids on disjoint subsets of a totally ordered set {$E$}. Then,
    \begin{equation*}
        \pi^:_{M \oplus N} = \pi^:_M \sqcup \pi^:_N.
    \end{equation*}
\end{rmk}

This characterization of the decorated permutations of direct sums of matroids using disjoint unions is analogous to the work of Moerman and Williams in~\cite[Section 4]{moerman2021grass} and Parisi, Shermann-Bennett, and Williams in~\cite[Section 12]{parisi2021m} using direct sums of permutations. We now turn our attention to the decorated permutations of the $2$-sums of matroids.

\begin{lemma} \label{dec-perm-2sum}
    Let $M$ and $N$ be matroids on subsets of {$E = \{e_1 < e_2 < \dots < e_n\}$} such that $\lvert E(M) \rvert \geq 2$, $\lvert E(N) \rvert \geq 2$, and {$E(M) \cap E(N) = \{e_i\}$}. Suppose that $E(M)$ and $E(N)$ are non-crossing subsets of {$E$}. {If $e_i$} is a loop or coloop of $M$ or $N$, then $\pi^:_{M \oplus_2 N}$ is given by
    \begin{equation*}
        {\pi^:_{M \oplus_2 N} = \pi^:_{M \setminus e_i \oplus N \setminus e_i}}.
    \end{equation*}
    {Otherwise,} $\pi^:_{M \oplus_2 N}$ is given by
    \begin{equation*}
        {\pi^:_{M \oplus_2 N}(e_j)} :=
        \begin{cases}
            {\pi^:_{M}(e_j)}, & \text{if } {e_j \in E(M) \setminus E(N), \pi^:_{M}(e_j) \neq e_i}\\
            {\pi^:_{N}(e_j)}, & \text{if } {e_j \in E(M) \setminus E(N), \pi^:_{M}(e_j) = e_i}\\
            {\pi^:_{N}(e_j)}, & \text{if } {e_j \in E(N) \setminus E(M), \pi^:_{N}(e_j) \neq e_i}\\
            {\pi^:_{M}(e_j)}, & \text{if } {e_j \in E(N) \setminus E(M), \pi^:_{N}(e_j) = e_i}.
        \end{cases}
    \end{equation*}
\end{lemma}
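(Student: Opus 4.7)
My plan is to verify the formula by unpacking the definition $\pi^:_M(e_j)=\min_j\{e_k\in E(M):e_j\in\cl_M([e_{j+1},e_k])\}$ for each of the three matroids $M$, $N$, and $M\oplus_2 N$, and using the circuit description of the $2$-sum to translate witnesses between them. The first step is to normalize the non-crossing hypothesis via Observation~\ref{crossing-subsets}: without loss of generality, $e_x\leq_x e_i\leq_x e_y$ for every $e_x\in E(M)$ and $e_y\in E(N)$, so that $E(M)\setminus\{e_i\}$ and $E(N)\setminus\{e_i\}$ occupy disjoint consecutive arcs of the cyclic order on $E(M\oplus_2 N)$ around the missing element $e_i$. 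This geometric arrangement is what makes cyclic intervals in $E(M\oplus_2 N)$ decompose cleanly into an $E(M)$-part and an $E(N)$-part, and it is the engine of the whole argument.

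For the degenerate case when $e_i$ is a loop or coloop of $M$ or $N$, the series-parallel conventions just before Proposition~\ref{2sum:pos-construct} identify $M\oplus_2 N$ with the direct sum $(M\setminus e_i)\oplus(N\setminus e_i)$ as ordered matroids, and Remark~\ref{rmk:dec-perm-direct-sum} then delivers $\pi^:_{M\oplus_2 N}=\pi^:_{M\setminus e_i}\sqcup\pi^:_{N\setminus e_i}$. For the generic case (neither loop nor coloop), I treat $e_j\in E(M)\setminus\{e_i\}$ by symmetry, and split on whether $\pi^:_M(e_j)\neq e_i$ or $\pi^:_M(e_j)=e_i$. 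I interpret the second and fourth cases of the stated formula as $\pi^:_N(\pi^:_M(e_j))=\pi^:_N(e_i)$ and $\pi^:_M(\pi^:_N(e_j))=\pi^:_M(e_i)$, respectively, which is the only reading consistent with $e_j$ lying outside the domain of the relevant map.

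In the subcase $\pi^:_M(e_j)=e_k\neq e_i$, the witnessing $M$-circuit $C$ sits inside $([e_{j+1},e_k]\cap E(M))\cup\{e_j\}$. Either $e_k$ lies cyclically before $e_i$ in $\leq_j$, in which case $[e_{j+1},e_k]\cap E(M)$ does not contain $e_i$ and $C\in\mathcal{C}(M\setminus e_i)\subseteq\mathcal{C}(M\oplus_2 N)$ witnesses $e_j\in\cl_{M\oplus_2 N}([e_{j+1},e_k])$ directly; or $e_k$ lies cyclically after $e_i$, in which case the interval in $E(M\oplus_2 N)$ already spans the whole $E(N)$-arc, and the witness can be produced either by an $e_i$-free circuit of $M$ (whose existence is forced by $e_k$ not being $e_i$) or by a bridge circuit using any circuit of $N$ through $e_i$ on the available $E(N)$-arc. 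Minimality of $e_k$ in $M\oplus_2 N$ is inherited from minimality in $M$, because any strictly earlier witness on the $E(M)$-arc in $M\oplus_2 N$ would come from either a circuit of $M\setminus e_i$ (contradicting minimality in $M$) or a bridge circuit whose $E(N)$-arc portion cannot fit inside an interval that stops before $e_i$ in $\leq_j$.

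In the subcase $\pi^:_M(e_j)=e_i$, no circuit of $M\setminus e_i$ can witness $e_j\in\cl_{M\oplus_2 N}([e_{j+1},e_l])$ for $e_l$ on the $E(M)$-arc, since such a circuit would give $\pi^:_M(e_j)\leq_j e_l<_j e_i$. Hence the witness in $M\oplus_2 N$ is a bridge circuit $(C_1\cup C_2)\setminus\{e_i\}$ with $e_j\in C_1\in\mathcal{C}(M)$ and $e_i\in C_2\in\mathcal{C}(N)$; the minimum $e_l$ for which $C_2\setminus\{e_i\}\subseteq[e_{j+1},e_l]\cap E(N)$ is exactly $\pi^:_N(e_i)$, because the non-crossing normalization identifies the $E(N)$-portion of the cyclic interval from $e_{j+1}$ with the cyclic interval in $E(N)$ starting from $e_{i+1}$. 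The main obstacle is precisely this last minimality transfer across the $E(M)/E(N)$-arc boundary; showing that $C_1$ can be chosen compatibly with any minimum $C_2$ requires invoking the defining $M$-circuit through $e_j$ and $e_i$ provided by $\pi^:_M(e_j)=e_i$, and checking that the resulting bridge really does fit inside $[e_{j+1},\pi^:_N(e_i)]$ in $E(M\oplus_2 N)$.
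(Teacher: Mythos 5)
Your proposal is correct and follows essentially the same route as the paper: normalize via Observation~\ref{crossing-subsets}, dispose of the loop/coloop case through the direct-sum identification and Remark~\ref{rmk:dec-perm-direct-sum}, then split on whether $\pi^:_M(e_j)=e_i$ and match the defining minima; you also correctly read the second and fourth cases as $\pi^:_N(e_i)$ and $\pi^:_M(e_i)$, which is what the paper's proof actually derives. The paper simply asserts the equality of the relevant min-sets, whereas you justify it by translating closure witnesses through the circuit description of the $2$-sum — a fuller account of the same argument.
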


\begin{proof}
    By Observation~\ref{crossing-subsets}, without loss of generality {we may assume} that {for all $e_x \in E(M)$} and {$e_y \in E(N)$, $e_x \leq_x e_i \leq_x e_y$}.
    
    Suppose that {$e_i$} is a loop or coloop in $M$ or $N$, then {$M \oplus_2 N = M \setminus e_i \oplus N \setminus e_i$}. Thus, {$\pi^:_{M \oplus_2 N} = \pi^:_{M \setminus e_i \oplus N \setminus e_i}$}.

    {We now restrict to when $e_i$} is neither a loop nor a coloop of $M$ or $N$. {Consider the case where $e_j \in E(M) \setminus E(N)$} and {$\pi^:_{M}(e_j) \neq e_i$}. Then,
    \begin{align*}
        {\pi^:_{M \oplus_2 N}(e_j)} &= {\min_j \{e_k \in E(M \oplus_2 N) : e_j \in \cl_{M \oplus_2 N}\big([e_{j+1},e_k]\big)\}}\\
        &= {\min_j \{e_k \in E(M \setminus e_i) : e_j \in \cl_M\big([e_{j+1},e_k]\big)\}}\\
        &= {\pi^:_M(e_j)}.
    \end{align*}
    Suppose instead that {$e_j \in E(M) \setminus E(N)$} and {$\pi^:_M(e_j) = e_i$}. Then,
    \begin{align*}
        {\pi^:_{M \oplus_2 N}(e_j)} &= {\min_j \{e_k \in E(M \oplus_2 N) : e_j \in \cl_{M \oplus_2 N}\big([e_{j+1},e_k]\big)\}}\\
        &= {\min_i \{e_k \in E(N \setminus e_i) : e_i \in \cl_N\big([e_{i+1},e_k]\big)\}}\\
        &= {\pi^:_N(e_i)}.
    \end{align*}
The remaining cases follow a similar argument.
\end{proof}

The decorated permutations of $2$-sums can analogously be defined in the language of \emph{amalgamations} of decorated permutations, as shown by Moerman and Williams in~\cite[Section 4]{moerman2021grass}. We complete this section by considering the decorated permutations of series-parallel connections. Parisi, Shermann-Bennett, and Williams have presented similar constructions in~\cite[Section 12]{parisi2021m}.

\begin{lemma}
    Let $M$ and $N$ be $2$-connected matroids on subsets of {$E = \{e_1 < e_2 < \dots < e_n\}$} such that $\lvert E(M) \rvert \geq 2$, $\lvert E(N) \rvert \geq 2$, and {$E(M) \cap E(N) = \{e_i\}$}. Suppose that $E(M)$ and $E(N)$ are non-crossing subsets of {$E$}. Then, {$\pi^:_{P(M,N)}$} is given by
    \begin{equation*}
        {\pi^:_{P(M,N)}(e_j)} :=
        \begin{cases}
            {\pi^:_{M}(e_j)}, & \text{if } {e_j \in E(M) \setminus E(N)}\\
            {\pi^:_{N}(e_i)}, & \text{if } {e_j = e_i}\\
            {\pi^:_{N}(e_j)}, & \text{if } {e_j \in E(N) \setminus E(M), \pi^:_{N}(e_j) \neq e_i}\\
            {\pi^:_{M}(e_i)}, & \text{if } {e_j \in E(N) \setminus E(M), \pi^:_{N}(e_j) = e_i},
        \end{cases}
    \end{equation*}
    and $\pi^:_{S(M,N)}$ is given by 
    \begin{equation*}
        {\pi^:_{S(M,N)}(e_j)} :=
        \begin{cases}
            {\pi^:_{M}(e_j)}, & \text{if } {e_j \in E(M), \pi^:_{M}(e_j) \neq e_i}\\
            {\pi^:_{N}(e_i)}, & \text{if } {e_j \in E(M), \pi^:_{M}(e_j) = e_i}\\
            {\pi^:_{N}(e_j)}, & \text{if } {e_j \in E(N) \setminus E(M), \pi^:_{N}(e_j) \neq e_i}\\
            {\pi^:_{M}(e_i)}, & \text{if } {e_j \in E(N) \setminus E(M), \pi^:_{N}(e_j) = e_i}.
        \end{cases}
    \end{equation*}
\end{lemma}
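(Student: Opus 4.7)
The plan is to prove the parallel connection formula by direct case analysis from the definition of $\pi^:_{P(M,N)}$, and then obtain the series connection formula via matroid duality using Corollary~\ref{dec-perm-dual} together with the identity $S(M,N) = [P(M^*,N^*)]^*$.

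First, by Observation~\ref{crossing-subsets}, I would reduce WLOG to the case where $e_x \leq_x e_i \leq_x e_y$ for every $e_x \in E(M)$ and $e_y \in E(N)$. Geometrically, this places $E(M)\setminus\{e_i\}$ and $E(N)\setminus\{e_i\}$ on two disjoint cyclic arcs of $E(P(M,N))$ separated by $e_i$. Consequently, starting from any $e_j \in E(M)\setminus\{e_i\}$, the $<_j$-order on $E(P(M,N))$ visits the $E(M)$-arc (ending at $e_i$) before the $E(N)$-arc, and symmetrically for $e_j \in E(N)\setminus\{e_i\}$ it visits the $E(N)$-arc (with $e_i$ on the boundary to $M$) before the $E(M)$-arc. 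Next, I would use the parallel-connection rank formula $r_{P(M,N)}(A) = r_M(A_M) + r_N(A_N) - [e_i \in \cl_M(A_M) \cap \cl_N(A_N)]$, where $A_M = A \cap E(M)$ and $A_N = A \cap E(N)$ (see~\cite{oxley2006matroid}), to deduce that for $e_j \notin A$, the condition $e_j \in \cl_{P(M,N)}(A)$ holds iff either $e_j$ already lies in the pure $\cl_M(A_M)$ or $\cl_N(A_N)$, or a mixed condition activates: $e_i \in \cl_M(A_M)$ together with $e_j \in \cl_N(A_N \cup \{e_i\})$ (or the symmetric statement). Because $M$ and $N$ are $2$-connected on at least two elements, $e_i$ is neither a loop nor a coloop of either, which is used throughout.

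With these tools, I would verify each of the four cases of the parallel formula by computing $\min_j\{e_k : e_j \in \cl_{P(M,N)}([e_{j+1}^*, e_k])\}$ along the layered $<_j$-order. For $e_j \in E(M)\setminus E(N)$, intervals up to $e_i$ restrict to $E(M)$ and $P$-closure agrees with $M$-closure, yielding $\pi^:_M(e_j)$; for $e_j = e_i$, the $<_i$-scan begins in the $E(N)$-arc and the closure reduces to $\cl_N$, yielding $\pi^:_N(e_i)$; for $e_j \in E(N)\setminus E(M)$ with $\pi^:_N(e_j) \neq e_i$, the analogous scan inside the $E(N)$-arc gives $\pi^:_N(e_j)$; finally, for $e_j \in E(N)\setminus E(M)$ with $\pi^:_N(e_j) = e_i$, no $e_k$ strictly preceding $e_i$ in the forward $E(N)$-arc can span $e_j$ (the mixed condition fails while $X_M = \emptyset$, since $e_i \notin \cl_M(\emptyset)$), and the first $e_k$ activating the mixed condition is the one placing $e_i \in \cl_M(X_M)$ for the first time, namely $e_k = \pi^:_M(e_i)$, at which point $e_j \in \cl_N(X_N \cup \{e_i\})$ holds by hypothesis. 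For the series formula I would apply $\pi^:_{S(M,N)} = (\pi^:_{P(M^*,N^*)})^{-1}$ (Corollary~\ref{dec-perm-dual} combined with $S(M,N) = [P(M^*,N^*)]^*$), invert the parallel formula for $P(M^*, N^*)$ case-by-case, and substitute $\pi^:_{M^*} = (\pi^:_M)^{-1}$ and $\pi^:_{N^*} = (\pi^:_N)^{-1}$ to recover the four series cases.

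The main obstacle is case (iv) of the parallel formula, where $\pi^:_{P(M,N)}(e_j)$ ``skips past'' the entire forward $E(N)\setminus\{e_i\}$ arc in $<_j$-order to land on $\pi^:_M(e_i)$. The delicate point is to exploit two minimality statements at once: $\pi^:_N(e_j) = e_i$ implies that no $N$-circuit through $e_j$ lies inside the forward $E(N)$-arc alone (ruling out early closure both from pure $N$-relations and from the mixed condition while $e_i \notin \cl_M(X_M)$), whereas $\pi^:_M(e_i)$ pinpoints precisely when $e_i$ enters $\cl_M$ of the growing $M$-initial segment. A secondary check is the boundary sub-case where $e_j$ is the last $N$-element before the wrap, so the forward $N$-part of $[e_{j+1}^*, e_k]$ is empty and the scan enters the $M$-arc immediately; this is handled by the same argument.
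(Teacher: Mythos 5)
Your treatment of the parallel-connection formula follows essentially the same route as the paper (a direct case analysis of $\min_j\{e_k : e_j\in\cl_{P(M,N)}([e_{j+1},e_k])\}$ under the non-crossing normalization), and your handling of the delicate case, where the scan must pass the entire forward $N$-arc and the mixed circuits $(C_1\cup C_2)\setminus\{e_i\}$ first become available at $e_k=\pi^:_M(e_i)$, is correct. Deriving the series formula by duality from $S(M,N)=[P(M^*,N^*)]^*$ and Corollary~\ref{dec-perm-dual} is a genuinely different (and cleaner) route than the paper's direct computation. However, the final step of your plan cannot be carried out as described: inverting the parallel formula for $P(M^*,N^*)$ does \emph{not} return the four series cases as printed. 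Indeed, for $e_j$ with $\pi^:_N(e_j)=e_i$ one has $\pi^:_{P(M^*,N^*)}(e_i)=\pi^:_{N^*}(e_i)=(\pi^:_N)^{-1}(e_i)=e_j$, so the inversion yields $\pi^:_{S(M,N)}(e_j)=e_i$, not $\pi^:_M(e_i)$.

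This is a defect of the statement rather than of your method: case (iv) of the displayed series formula is incorrect. As written, the four series cases send both $e_i$ (via the first case, since $\pi^:_M(e_i)\neq e_i$) and $(\pi^:_N)^{-1}(e_i)$ (via the fourth) to $\pi^:_M(e_i)$, and nothing is sent to $e_i$, so they do not define a permutation. Concretely, take $E=\{e_1<e_2<e_3\}$ with $M\cong U^1_2$ on $\{e_1,e_2\}$, $N\cong U^1_2$ on $\{e_2,e_3\}$, and $e_i=e_2$; then $S(M,N)\cong U^2_3$ and $\pi^:_{S(M,N)}(e_3)=e_2$, whereas the stated formula gives $\pi^:_M(e_2)=e_1$. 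The correct value in case (iv) is $e_i=\pi^:_N(e_j)$, so cases (iii) and (iv) merge; this is exactly what your duality computation produces. The paper's own direct proof makes the corresponding slip: in $S(M,N)$, every circuit through such an $e_j$ that meets $E(M)$ \emph{contains} $e_i$ (unlike in $P(M,N)$, where $e_i$ is deleted from the mixed circuits), and since $e_i$ is the last element of $E(M)$ in the scan order, $e_j$ cannot enter the closure before $e_k=e_i$. If you carry out your inversion honestly you will prove the corrected statement; you should flag the discrepancy rather than claim to recover the formula as printed.
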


\begin{proof} By Observation~\ref{crossing-subsets}, there is an element {$e_r$} such that {$E(M)=[e_r,e_i]$} and {$E(N)=[e_i,e_{r-1}]$ where the indices are taken modulo $n$}. We assume $r=1$ without loss of generality. 

For {$r \leq_r j <_r i$}, we have
    \begin{align*}
        {\pi^:_{P(M,N)}(e_j)} &= {\min_j \left\{ e_k \in E(P(M,N)) : e_j \in \cl_{P(M,N)}\big([e_{j+1},e_k]\big) \right\}}\\
        &= {\min_j \left\{ e_k \in E(M) : e_j \in \cl_{M}\big([e_{j+1},e_k]\big) \right\}= \pi^:_{M}(e_j)}.
    \end{align*}

In the remainder of this proof, we omit the first equality that is simply the definition of the decorated permutation. We have,
    \begin{align*}
        {\pi^:_{P(M,N)}(e_i)} &= {\min_i \left\{ e_k \in E(N) : e_i \in \cl_{N}\big([e_{i+1},e_k]\big) \right\} = \pi^:_{N}(e_i)},\\
        {\pi^:_{S(M,N)}(e_i)} &= {\min_r \left\{ e_k \in E(M) : e_i \in \cl_M\big([e_r,e_k]\big) \right\} = \pi^:_M(e_i)}.
    \end{align*}
    Now suppose that {$r \leq_r j <_r i$} and {$\pi^:_{M}(e_j) \neq e_i$}. Then,
    \begin{equation*}
        {\pi^:_{S(M,N)}(e_j) = \min_j \left\{ e_k \in E(M) : e_j \in \cl_{M}\big([e_{j+1},e_k]\big) \right\} = \pi^:_{M}(e_j)}.
    \end{equation*}
    {Consider the case where $r \leq_r j <_r i$} and {$\pi^:_{M}(e_j) = e_i$}. Then,
    \begin{equation*}
        {\pi^:_{S(M,N)}(e_j) = \min_i \left\{ e_k \in E(N) : e_i \in \cl_{N}\big([e_{i+1},e_k]\big) \right\} = \pi^:_{N}(e_i)}.
    \end{equation*}
    {Now assume} that {$i <_i j <_i r$} and {$\pi^:_{N}(e_j) \neq e_i$}. Then,
    \begin{align*}
        {\pi^:_{P(M,N)}(e_j)} &= {\min_j \left\{ e_k \in E(N) : e_j \in \cl_{N}\big([e_{j+1},e_k]\big) \right\} = \pi^:_{N}(e_j)},\\
        {\pi^:_{S(M,N)}(e_j)} & = {\min_j \left\{ e_k \in E(N) : e_j \in \cl_{N}\big([e_{j+1},e_k]\big) \right\} = \pi^:_{N}(e_j)}.
    \end{align*}
    Suppose instead that {$i <_i j <_i r$}  and {$\pi^:_{N}(e_j) = e_i$}. Then,
    \begin{align*}
        {\pi^:_{P(M,N)}(e_j)} & = {\min_j \left\{ e_k \in E(M) : e_i \in \cl_{M}\big([e_r,e_k]\big) \right\} = \pi^:_{M}(e_i)},\\
        {\pi^:_{S(M,N)}(e_j)} &= {\min_i \left\{ e_k \in E(M) : e_i \in \cl_{M}\big([e_{i+1},e_k]\big) \right\} = \pi^:_{M}(e_i)}.
    \end{align*}
\end{proof}

\subsubsection{Grassmann necklaces}

\begin{defi}
    A \emph{Grassmann necklace} on {$E = \{e_1 < e_2 < \dots < e_n\}$} is a sequence {$\mathcal{J} = (J_1,\dots,J_n)$} of subsets {$J_i \subseteq E$} given by
    \begin{equation*}
        {J_{i +1} =
        \begin{cases}
            (J_i \setminus \{e_i\}) \cup \{e_j\}, & \text{if } e_i \in J_i\\
            J_{i+1} = J_i, & \text{otherwise,}
        \end{cases}}
    \end{equation*}
    where the indices are taken modulo $n$.
\end{defi}

Postnikov defines in~\cite{postnikov2006total} the map {from the set of ordered matroids on $E$ to the set of all Grassmann necklaces on $E$} as follows. Let $M$ be a matroid on {$E$}, then the Grassmann necklace associated to $M$ is {$\mathcal{J}(M) = (J_1,\dots,J_n)$}, where {$J_i$} is the lexicographically minimal basis with respect to {$\leq_i$}.

Postnikov presents the following bijection {from the set of all Grassmann necklaces on $E$ to the set of all decorated permutations on $E$} in~\cite[Lemma 16.2]{postnikov2006total}. Let $\mathcal{J}$ be a Grassmann necklace on {$E$} and {$e_i \in E$}, then the decorated permutation associated to $\mathcal{J}$ is {denoted} $\pi^:_{\mathcal{J}}$ and is given by
\begin{equation*}
    {\pi^:_{\mathcal{J}}(e_i) =
    \begin{cases}
        e_j, & \text{if } J_{i+1} = (J_i \setminus \{e_i\}) \cup \{e_j\} \text{ and } e_j \neq e_i\\
        \overline{e_i}, & \text{if } J_{i+1} = J_i \text{ and } e_i \in J_i\\
        \underline{e_i}, & \text{if } J_{i+1} = J_i \text{ and } e_i \notin J_i.
    \end{cases}}
\end{equation*}
Let {$\pi^: = (\pi, col)$} be a decorated permutation on {$E$} and {$e_i \in E$}, then the Grassmann necklace associated to $\pi^:$ is {$\mathcal{J}(\pi^:) = (J_1,\dots,J_n)$} and is given by
\begin{equation*}
    {J_i = \left\{ e_j \in E  : e_j <_i \pi^{-1}(e_j) \mbox{ or } \pi^:(e_j) = \overline{e_j} \right\}.}
\end{equation*}

\begin{figure}[h]
    \begin{tikzpicture}
    \tikzset{edge/.style = {->,> = latex'}}
        \node (1) at (0,2){\{ordered matroids on $E$\}};
        \node (3) at (7,2){\{decorated permutations on $E$\}};
        \node (4) at (7,4){\{Grassmann necklaces on $E$\}};

        \draw[edge] (1) to (3);
        \draw[edge] (1) to (4);
        \draw[edge] (3) to (4);
        \draw[edge] (4) to (3);
    \end{tikzpicture}
    \caption{For a totally ordered set $E$, this diagram commutes.}
    \label{fig:commute-triangle}
\end{figure}
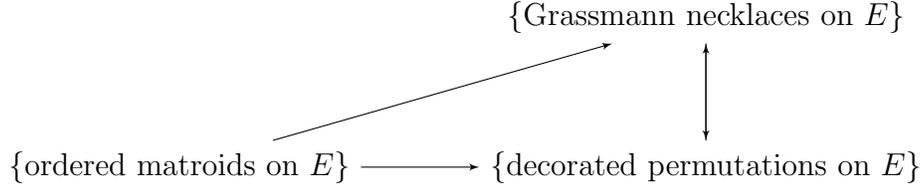

Figure~\ref{fig:commute-triangle} depicts the commutative triangle given by the maps relating {the set of all decorated permutations on $E$, the set of all Grassmann necklaces on $E$, and the set of ordered matroids on $E$.} The commutativity of the diagram implies that for a fixed matroid $M$, the fibers of $\pi^:_M$ and $\mathcal{J}(M)$, under the map {from the set of ordered matroids on $E$ to the set of all decorated permutations on $E$ and the map from the set of ordered matroids on $E$ to the set of all Grassmann necklaces on $E$} respectively, are all equal.

\subsection{Positroid envelopes, envelope classes, and varieties}\label{sec:basics-envelopes}

In this subsection we provide relevant background on positroid envelopes and positroid varieties. We define positroid envelope classes along with binary operations on the positroid envelope classes that are the analogues of direct sums and $2$-sums of positroids. We then show that the positroid envelope classes have decompositions under these direct sum and $2$-sum operations. The reader is directed to~\cite{knutson2013positroid}, wherein Knutson, Lam, and Speyer originally define positroid envelopes, for more information on positroid envelopes and positroid varieties.

\subsubsection{Positroid envelopes}

Oh characterizes the positroids in relation to the fibers of the map {from the set of ordered matroids on $E$ to the set of all Grassmann necklaces on $E$} as {stated in Theorem}~\ref{thm:pos-intersect}. {For a set $X$ we denote by ${X \choose k}$ the set of all $k$-element subsets of $X$.}

\begin{thm}[Theorem 8 in~\cite{oh2011positroids}] \label{thm:pos-intersect}
    Let $P$ be a matroid on {$E = \{e_1 < e_2 < \dots < e_n\}$} of rank $k$ and {$\mathcal{J}(P) = (J_1,\dots,J_n)$} its corresponding Grassmann necklace. Then $P$ is a positroid if and only if
    \begin{equation*}
        \mathcal{B}(P) = \bigcap^n_{{i}=1} \left\{ B {\in} {{E \choose k} : J_i \leq_i B} \right\}.
    \end{equation*}
\end{thm}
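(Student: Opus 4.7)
The plan is to prove both implications using Postnikov's bijection between positroids on $E$ and Grassmann necklaces on $E$, combining a direct nonnegative-minor argument for the forward direction with a uniqueness argument for the reverse direction.

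For the forward direction, suppose $P$ is a positroid with realization $A$ whose maximal minors are all nonnegative. The inclusion $\mathcal{B}(P) \subseteq \bigcap_i \{ B : J_i \leq_i B \}$ is immediate from the definition of the Grassmann necklace, since each $J_i$ is the $\leq_i$-lex-minimal basis of $P$. For the reverse inclusion I would show that any $k$-subset $B$ satisfying $J_i \leq_i B$ for every $i \in [n]$ has Pl\"ucker coordinate $\Delta_B(A) > 0$, hence $B \in \mathcal{B}(P)$. I would proceed by induction on $d(B) := \min_i |B \setminus J_i|$. The base case $d(B) = 0$ forces $B = J_i$ for some $i$, which is already a basis. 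In the inductive step, pick an $i$ achieving the minimum and apply a three-term Pl\"ucker relation on $A$ with pivot elements chosen using the matroid structure of $J_i$ together with the cyclic order $\leq_i$. This expresses $\Delta_B$ in terms of minors $\Delta_{B'}$ with $d(B') < d(B)$, which are nonnegative by the induction hypothesis; the careful choice of pivots combined with the necklace propagation rule $J_{i+1} = (J_i \setminus \{e_i\}) \cup \{e_j\}$ forces $\Delta_B$ to be strictly positive.

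For the reverse direction, suppose $\mathcal{B}(P) = \bigcap_i \{ B : J_i \leq_i B \}$ where $\mathcal{J}(P) = (J_1, \ldots, J_n)$. Postnikov's bijection, together with the commutative diagram in Figure~\ref{fig:commute-triangle}, guarantees the existence of a positroid $P'$ on $E$ whose Grassmann necklace is also $\mathcal{J}$. Applying the already-proven forward direction to $P'$ gives $\mathcal{B}(P') = \bigcap_i \{ B : J_i \leq_i B \} = \mathcal{B}(P)$, so $P = P'$ and therefore $P$ is a positroid.

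The main obstacle is the inductive step of the forward direction: choosing the Pl\"ucker pivots so that every resulting subset $B'$ still satisfies the necklace conditions $J_j \leq_j B'$ for all $j$, and so that the signed cancellation in the Pl\"ucker relation yields strict positivity $\Delta_B(A) > 0$ rather than merely nonnegativity. This is where the cyclic order structure of the Grassmann necklace becomes essential; restricting attention to a single linear order $\leq_1$ would fail to capture all bases of the positroid.
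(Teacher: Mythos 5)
This statement is not proved in the paper at all; it is imported verbatim as Theorem~8 of Oh's \emph{Positroids and Schubert matroids}, so there is no in-paper argument to compare against. Judged on its own terms, your proposal has a genuine gap at exactly the point you flag as ``the main obstacle.'' The easy inclusion $\mathcal{B}(P)\subseteq\bigcap_i\{B: J_i\leq_i B\}$ and the reverse direction of the equivalence (via Postnikov's bijection producing a positroid $P'$ with necklace $\mathcal{J}$, then invoking the forward direction) are fine. But the hard content of the theorem is the inclusion $\bigcap_i\{B: J_i\leq_i B\}\subseteq\mathcal{B}(P)$, and your inductive step does not establish it. A three-term Pl\"ucker relation writes $\Delta_B\cdot\Delta_C$ as a sum of two products of other maximal minors; to conclude $\Delta_B>0$ you need (a) a specific companion minor $\Delta_C$ known to be strictly positive, and (b) at least one product on the right-hand side known to be strictly positive. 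Total nonnegativity of $A$ only gives you $\geq 0$ for free, and your induction hypothesis gives strict positivity only for sets that themselves lie in $\bigcap_i\{B: J_i\leq_i B\}$ with smaller $d$-value. You have not exhibited a choice of pivots for which all the terms that need to be strictly positive are of that form, nor shown that the necklace propagation rule guarantees such a choice exists. That existence claim \emph{is} the theorem; asserting that ``the careful choice of pivots forces'' positivity defers rather than supplies the argument.

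It is also worth noting that the known proofs do not take this route. Oh's original argument works through decorated permutations and Le-diagram/Schubert-matroid combinatorics, and later simplifications argue combinatorially with basis exchange against the cyclic orders; none of them run a Pl\"ucker-relation induction on $\min_i|B\setminus J_i|$. If you want to complete your approach you would need, at minimum, a lemma of the form: for every non-basis candidate $B$ in the intersection with $d(B)>0$ there exist $a<_i b<_i c<_i d$ and a common $(k-2)$-set $S$ such that the relevant sets $S\cup\{a,c\}$, $S\cup\{b,d\}$ (or the analogous pair) again lie in the intersection with strictly smaller $d$. Proving that lemma is where all the difficulty lives, and it is absent from the proposal.
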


This immediately implies that if $P$ is a positroid and $M$ an ordered matroid such that $\mathcal{J}(M) = \mathcal{J}(P)$, then $\mathcal{B}(M) \subseteq \mathcal{B}(P)$. Furthermore, we say that $P$ is the \emph{positroid envelope of $M$}~\cite{knutson2013positroid}. As any matroid $M$ whose positroid envelope is $P$ has the same rank as $P$, we obtain the following.

\begin{cor} \label{cor:pos-env-weak-map}
    Let $M$ be an ordered matroid with positroid envelope $P$. Then $\mathbb{1} : P \to M$ is rank preserving weak map.
\end{cor}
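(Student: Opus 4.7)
The plan is to unpack the definition of positroid envelope via Theorem~\ref{thm:pos-intersect} and show that every basis of $M$ is a basis of $P$, which is exactly what is needed for $\mathbb{1}$ to be a weak map from $P$ to $M$. Rank preservation then follows immediately from the discussion preceding the corollary, which notes that every matroid with positroid envelope $P$ has the same rank as $P$.

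More concretely, I would first recall that by the definition of positroid envelope, $\mathcal{J}(M) = \mathcal{J}(P) = (J_1,\dots,J_n)$. By Postnikov's definition of the map from ordered matroids to Grassmann necklaces, each $J_i$ is the lexicographically minimal basis of $M$ with respect to $\leq_i$, so in particular every basis $B \in \mathcal{B}(M)$ satisfies $J_i \leq_i B$ for each $i \in [n]$. Applying Theorem~\ref{thm:pos-intersect} to the positroid $P$ (which has the same Grassmann necklace), we conclude
\[
    \mathcal{B}(M) \subseteq \bigcap_{i=1}^n \Bigl\{ B \in \tbinom{E}{k} : J_i \leq_i B \Bigr\} = \mathcal{B}(P).
\]
Taking subsets yields $\mathcal{I}(M) \subseteq \mathcal{I}(P)$, so for every $I \in \mathcal{I}(M)$ we have $\mathbb{1}^{-1}[I] = I \in \mathcal{I}(P)$, which is precisely the condition for $\mathbb{1} : P \to M$ to be a weak map.

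For the rank-preserving condition, I would simply observe that both $P$ and $M$ have rank $k = |J_1|$: the rank of $P$ equals $k$ by Theorem~\ref{thm:pos-intersect}, and any basis of $M$ (which is nonempty since $\mathcal{J}(M) = \mathcal{J}(P)$ forces $J_1 \in \mathcal{B}(M)$) has cardinality $k$ as well. Thus $r_P(P) = r_M(M)$, completing the proof. There is no real obstacle here; the only subtlety is remembering the direction of the weak map — it goes from $P$ to $M$ because bases of $M$ pull back to bases of $P$, not the other way around — which is the convention matching Proposition~\ref{n-connect} and Corollary~\ref{cor:uni-free} used elsewhere in the paper.
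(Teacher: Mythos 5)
Your proposal is correct and follows essentially the same route as the paper, which derives $\mathcal{B}(M) \subseteq \mathcal{B}(P)$ directly from Theorem~\ref{thm:pos-intersect} via the shared Grassmann necklace and then notes that equal rank gives the rank-preserving condition. Your extra care about the direction of the weak map and the passage from bases to independent sets just makes explicit what the paper treats as immediate.
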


The next two corollaries immediately follow from applying results for rank-preserving weak maps to positroid envelopes.

\begin{cor}
    Let $M$ be an ordered matroid with positroid envelope $P$. Let $N$ be a minor of $M$. Then there exists a minor $Q$ of $P$ such that $\mathbb{1} : Q \to N$ is a rank-preserving weak map.
\end{cor}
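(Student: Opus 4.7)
The plan is to deduce this corollary by combining the rank-preserving weak map coming from the positroid envelope hypothesis with the already-stated machinery about minors and weak maps.

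First I would invoke Corollary~\ref{cor:pos-env-weak-map} to observe that since $P$ is the positroid envelope of $M$, the identity $\mathbb{1}: P \to M$ is a rank-preserving weak map with $E(P) = E(M)$. This is the structural input that lets me compare minors of $P$ with minors of $M$ through a common ground-set.

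Next, $N$ is given as a minor of $M$, so there exist disjoint subsets $X, Y \subseteq E(M)$ with $N = (M/X)\setminus Y$. A priori this expression need not avoid contractions of loops or deletions of coloops, so Theorem~\ref{thm:minor-contain} does not immediately apply. To fix this, I would use the Proposition asserting that any minor can be properly expressed to replace $X$ and $Y$ (if necessary) by disjoint subsets $X', Y' \subseteq E(M)$ for which $N = (M/X') \setminus Y'$ is a properly expressed minor.

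Finally, I would apply Theorem~\ref{thm:minor-contain} with the weak map $\mathbb{1}: P \to M$ and the properly expressed minor $N = (M/X') \setminus Y'$ to produce a minor $Q = (P/X') \setminus Y'$ of $P$ such that $\mathbb{1}: Q \to N$ is a rank-preserving weak map, which is precisely what is claimed. There is essentially no obstacle here beyond remembering to pass to a properly expressed form of the minor before citing Theorem~\ref{thm:minor-contain}; the corollary is a direct packaging of Corollary~\ref{cor:pos-env-weak-map}, the proper expression proposition, and Theorem~\ref{thm:minor-contain}.
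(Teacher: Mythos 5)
Your proof is correct and follows the same route as the paper, which simply cites Corollary~\ref{cor:pos-env-weak-map} together with Theorem~\ref{thm:minor-contain}. Your explicit step of first passing to a properly expressed form of the minor is a detail the paper leaves implicit, and it is exactly the right thing to check before applying Theorem~\ref{thm:minor-contain}.
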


\begin{proof}
    This follows from Corollary~\ref{cor:pos-env-weak-map} and Lemma~\ref{thm:minor-contain}.
\end{proof}

\begin{cor}
    Let $M$ be an ordered matroid with positroid envelope $P$. Suppose that $P$ is $U^k_n$-free, then $M$ is $U^k_n$-free.
\end{cor}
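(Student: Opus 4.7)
The plan is to chain together the two preceding results. By Corollary~\ref{cor:pos-env-weak-map}, the hypothesis that $P$ is the positroid envelope of $M$ gives us that $\mathbb{1} : P \to M$ is a rank-preserving weak map. Now Corollary~\ref{cor:uni-free} applies directly to any rank-preserving weak map $\mathbb{1} : M' \to N'$ and says that $U^k_n$-freeness transfers from the domain $M'$ to the codomain $N'$. Taking $M' = P$ and $N' = M$ in that statement, and using the assumption that $P$ is $U^k_n$-free, we conclude that $M$ is $U^k_n$-free.

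There really is no obstacle here: the statement is an immediate specialization of Corollary~\ref{cor:uni-free} to the weak map produced by Corollary~\ref{cor:pos-env-weak-map}, and the proof should be a single sentence citing those two results in order. The only thing to double-check is the direction of the weak map — we have $\mathbb{1} : P \to M$ (not the reverse), and Corollary~\ref{cor:uni-free} passes $U^k_n$-freeness from the source to the target, so the direction lines up correctly with the hypothesis being on $P$ and the conclusion being on $M$.
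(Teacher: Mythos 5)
Your proposal is correct and matches the paper's own proof exactly: the paper likewise deduces the result by combining Corollary~\ref{cor:pos-env-weak-map} (giving the rank-preserving weak map $\mathbb{1} : P \to M$) with Corollary~\ref{cor:uni-free}. Your check that the direction of the weak map lines up with the direction in which $U^k_n$-freeness transfers is the right thing to verify, and it does.
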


\begin{proof}
    This follows from Corollaries~\ref{cor:pos-env-weak-map} and \ref{cor:uni-free}.
\end{proof}

For any rank-$k$ matroid $M$ on $n$ elements, there exists a unique matroid $N$ isomorphic to the uniform matroid $U^k_n$ such that $\mathbb{1} : N \to M$ is a rank-preserving weak map. We use this fact, together with Lemma~\ref{lem:pos-minor-contain}, to provide sufficient conditions {in Corollary}~\ref{cor:pos-uni-minor-contain} for a positroid to contain a minor isomorphic to a uniform matroid.

\begin{lemma} \label{lem:pos-minor-contain}
    Let $M$ be an ordered matroid with positroid envelope $P$. Suppose that $N$ is a minor of $M$, and let $Q$ be the positroid envelope of $N$. Then there exists a minor $K$ of $P$ such that $\mathbb{1}: K \to Q$ is a rank-preserving weak map.
\end{lemma}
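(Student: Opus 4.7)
The plan is to take $K$ to be the minor of $P$ obtained by performing the same contractions and deletions that produce $N$ from $M$, and then to upgrade the inherited weak map $\mathbb{1}:K\to N$ to a weak map $\mathbb{1}:K\to Q$ using Oh's intersection characterization of positroids (Theorem~\ref{thm:pos-intersect}).

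First, by Corollary~\ref{cor:pos-env-weak-map}, $\mathbb{1}:P\to M$ is a rank-preserving weak map. Since $N$ is a minor of $M$, we may write $N=(M/X)\setminus Y$ as a properly expressed minor by the result that every minor can be properly expressed (Proposition 5.7 of~\cite{lucas1975weak}). Theorem~\ref{thm:minor-contain} then provides $K=(P/X)\setminus Y$ as a minor of $P$ for which $\mathbb{1}:K\to N$ is a rank-preserving weak map. Because positroids are closed under minors (Proposition~\ref{pos-dual-minor}), $K$ is itself a positroid on $E(N)$.

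Next, I would compare the Grassmann necklaces of $K$ and $Q$. The weak map $\mathbb{1}:K\to N$ gives $\mathcal{B}(N)\subseteq\mathcal{B}(K)$, so the $\leq_i$-minimum basis of $K$ satisfies $J_i(K)\leq_i J_i(N)$ for every $i$, simply because the minimum is taken over a larger collection. Since $Q$ is the positroid envelope of $N$, we have $\mathcal{J}(Q)=\mathcal{J}(N)$, and therefore $J_i(K)\leq_i J_i(Q)$ for every $i$.

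Finally, both $K$ and $Q$ are positroids on the same ground set with the same rank (equal to $r(N)$), so Theorem~\ref{thm:pos-intersect} expresses their collections of bases as the intersections $\bigcap_i\{B:J_i\leq_i B\}$ using their own necklaces. The inequality $J_i(K)\leq_i J_i(Q)$ then forces $\mathcal{B}(Q)\subseteq\mathcal{B}(K)$, which is precisely the condition that $\mathbb{1}:K\to Q$ is a weak map; rank-preservation is immediate since $r(K)=r(N)=r(Q)$. The main obstacle is that the natural candidate $K=(P/X)\setminus Y$ a priori only weak-maps to $N$ rather than to the potentially strictly larger envelope $Q$; Oh's intersection formula is the key bridge, converting the inequality on minimum bases, which comes for free from $\mathcal{B}(N)\subseteq\mathcal{B}(K)$, into the basis containment $\mathcal{B}(Q)\subseteq\mathcal{B}(K)$ needed to conclude.
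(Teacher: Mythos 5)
Your proposal is correct and follows essentially the same route as the paper: apply Theorem~\ref{thm:minor-contain} to get a minor $K$ of $P$ with $\mathbb{1}:K\to N$ a rank-preserving weak map, deduce $J_i(K)\leq_i J_i(N)=J_i(Q)$ from $\mathcal{B}(N)\subseteq\mathcal{B}(K)$, and then use Theorem~\ref{thm:pos-intersect} to convert the necklace inequality into $\mathcal{B}(Q)\subseteq\mathcal{B}(K)$. Your explicit remarks that the minor must be properly expressed and that $K$ is itself a positroid (needed to apply Oh's intersection formula to $K$) are details the paper leaves implicit, but the argument is the same.
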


\begin{proof}
    Without loss of generality, let $Q$ and $N$ be ordered matroids on {$E = \{e_1 < e_2 < \dots < e_n\}$ where $r_Q(E) = r = r_N(E)$}. By Theorem~\ref{thm:minor-contain}, there exists a minor $K$ of $P$ such that $\mathbb{1} : K \to N$ is a rank-preserving weak map and therefore $\mathcal{B}(N) \subseteq \mathcal{B}(K)$. Let {$\mathcal{J}(Q) = (J_1, \ldots ,J_n) = \mathcal{J}(N)$} and let {$\mathcal{J}(K) = (J'_1, \ldots, J'_n)$}. Then for all {$i \in [n]$, $J'_i \leq_i J_i$}. By Theorem~\ref{thm:pos-intersect},
    \begin{align*}
        {\mathcal{B}(Q)} &{= \bigcap^n_{i=1} \left\{ B \in {E \choose r} : J_i \leq_i B \right\}}\\
        &{\subseteq \bigcap^n_{i=1} \left\{ B \in {E \choose r} : J'_i \leq_i B \right\} = \mathcal{B}(K).} 
    \end{align*}
    Therefore, $\mathbb{1} : K \to Q$ is a rank-preserving weak map.
\end{proof}

\begin{cor} \label{cor:pos-uni-minor-contain}
    Let $M$ be an ordered matroid with positroid envelope $P$. Suppose that $M$ has a minor $N$ with positroid envelope $Q \cong U^k_n$. Then $Q$ is a minor of $P$.
\end{cor}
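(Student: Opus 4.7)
The plan is to apply Lemma~\ref{lem:pos-minor-contain} directly and then exploit the extremal property of $U^k_n$ among rank-$k$ matroids on $n$ elements, as noted in the paragraph immediately preceding that lemma. First, I would invoke Lemma~\ref{lem:pos-minor-contain} on the given data: $M$ has positroid envelope $P$ and the minor $N$ of $M$ has positroid envelope $Q$. This produces a minor $K$ of $P$ such that $\mathbb{1} : K \to Q$ is a rank-preserving weak map. In particular, $K$ and $Q$ share the same ground-set $E$, both have rank $k$, and $|E| = n$.

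Next, I would identify $K$ with $Q$ directly from the weak-map condition. Since $\mathbb{1} : K \to Q$ is a weak map, every basis of $Q$ pulls back to a basis of $K$, giving $\mathcal{B}(Q) \subseteq \mathcal{B}(K)$. Because $Q \cong U^k_n$, the set $\mathcal{B}(Q)$ is the collection of all $k$-element subsets of $E$. Hence $\mathcal{B}(K)$ contains every $k$-subset of $E$, and since $K$ has rank $k$ these exhaust the bases of $K$. Therefore $K \cong U^k_n \cong Q$, so a matroid isomorphic to $Q$ occurs as a minor of $P$, proving the corollary.

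There is essentially no obstacle; the argument is a two-line consequence of Lemma~\ref{lem:pos-minor-contain} together with the observation that $U^k_n$ is the unique rank-$k$ matroid on $n$ elements admitting a rank-preserving weak map \emph{from} any other such matroid. The only minor subtlety worth flagging is that the conclusion ``$Q$ is a minor of $P$'' is to be interpreted up to matroid isomorphism, consistent with the paper's usage throughout Section~\ref{sec:prelims}.
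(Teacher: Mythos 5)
Your proposal is correct and follows the paper's own argument essentially verbatim: invoke Lemma~\ref{lem:pos-minor-contain} to obtain a minor $K$ of $P$ with $\mathbb{1}:K\to Q$ a rank-preserving weak map, then conclude $\mathcal{B}(Q)=\binom{E}{k}\subseteq\mathcal{B}(K)\subseteq\binom{E}{k}$, forcing $K=Q$. No issues.
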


\begin{proof}
    Let {$E$} be the totally ordered ground-set of $Q$. By Lemma~\ref{lem:pos-minor-contain}, $P$ contains a minor $K$ such that $\mathbb{1}: K \to Q$ is a rank-preserving weak map. Thus,
    \begin{equation*}
        \mathcal{B}(Q) = {{E \choose k}}  \subseteq \mathcal{B}(K) \subseteq {{E \choose k}}.
    \end{equation*}
    Therefore, $K = Q$.
\end{proof}

We now consider the equivalence classes of ordered matroids where two ordered matroids are equivalent if they share the same positroid envelope. We formalize this in the following definition.

\begin{defi}[Positroid envelope class]
    For a fixed positroid $P$, the \emph{envelope class} $\Omega_P$ is the set of all ordered matroids whose positroid envelope is $P$.
\end{defi}

Note that we may equivalently define the positroid envelope classes as follows.
\begin{equation*}
    \Omega_P = \{M : \mathcal{J}(M) = \mathcal{J}(P)\} = \{M : \pi^:_M = \pi^:_P\}.
\end{equation*}

By Corollary~\ref{dec-perm-dual}, we arrive at the following duality result for positroid envelope classes.

\begin{prop} \label{prop:class-dual}
    For any positroid envelope class $\Omega_P$, we have
    \begin{equation*}
        \Omega_{P^*} = \{M^* : M \in \Omega_P\}.
    \end{equation*}
\end{prop}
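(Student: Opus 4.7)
The plan is to exploit the characterization $\Omega_P = \{M : \pi^:_M = \pi^:_P\}$ noted just before the statement, and combine it with Corollary~\ref{dec-perm-dual}, which says that $\pi^:_M = (\pi^:_{M^*})^{-1}$ for every ordered matroid. Since matroid duality is an involution, proving set equality reduces to checking that $M \in \Omega_P$ if and only if $M^* \in \Omega_{P^*}$.

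First I would handle the forward inclusion $\{M^* : M \in \Omega_P\} \subseteq \Omega_{P^*}$. Fixing $M \in \Omega_P$, we have $\pi^:_M = \pi^:_P$ by the definition of the envelope class. Applying Corollary~\ref{dec-perm-dual} to both $M$ and $P$ yields
\[
\pi^:_{M^*} = (\pi^:_M)^{-1} = (\pi^:_P)^{-1} = \pi^:_{P^*},
\]
and therefore $M^* \in \Omega_{P^*}$.

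For the reverse inclusion, take any $N \in \Omega_{P^*}$, so $\pi^:_N = \pi^:_{P^*}$. Applying Corollary~\ref{dec-perm-dual} again gives $\pi^:_{N^*} = (\pi^:_N)^{-1} = (\pi^:_{P^*})^{-1} = \pi^:_P$, so $N^* \in \Omega_P$. Since $N = (N^*)^*$, this exhibits $N$ as an element of $\{M^* : M \in \Omega_P\}$.

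There is no real obstacle here: every step is a direct application of Corollary~\ref{dec-perm-dual} together with the decorated-permutation description of positroid envelope classes. The only thing to be slightly careful about is that the proposition is a set equality, so both inclusions must be verified, but both directions are symmetric via the involution $M \leftrightarrow M^*$.
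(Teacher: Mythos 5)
Your proof is correct and follows exactly the route the paper intends: the paper gives no separate argument beyond citing Corollary~\ref{dec-perm-dual} together with the characterization $\Omega_P = \{M : \pi^:_M = \pi^:_P\}$, which is precisely what you have spelled out. Both inclusions are handled correctly via the involution $M \leftrightarrow M^*$.
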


For a fixed finite, totally ordered set {$E = \{e_1 < e_2 < \dots < e_n\}$}, with {$D_n$ the dihedral group on $2n$-elements}, we define the group action of {$D_n$} on {the set of positroid envelope classes on $E$} as follows. For {$\omega \in D_n$},
\begin{equation*}
    \omega \cdot \Omega_P := \{\omega \cdot M : M \in \Omega_P\}.
\end{equation*}
Dihedral action on positroid envelope classes has the following nice property.

\begin{prop}
    For a positroid envelope class $\Omega_P$ and {$\omega \in D_n$}, we have $\omega \cdot \Omega_P = \Omega_{\omega \cdot P}$.
\end{prop}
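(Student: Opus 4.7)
The strategy is to reduce the statement to a well-definedness claim about decorated permutations. Since $M \in \Omega_P$ if and only if $\pi^:_M = \pi^:_P$, the proposition will follow from the intermediate claim that for every $\omega \in D_n$ and every pair of ordered matroids $M, M'$ on the same ground-set, $\pi^:_M = \pi^:_{M'}$ implies $\pi^:_{\omega \cdot M} = \pi^:_{\omega \cdot M'}$. In other words, $\pi^:_{\omega \cdot M}$ depends on $M$ only through its decorated permutation. Note also that $\omega \cdot P$ is itself a positroid, since positroids are closed under the dihedral action, so $\Omega_{\omega \cdot P}$ is well-defined.

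To establish the intermediate claim, I would induct on the length of $\omega$ as a word in the generators $r$ and $s$ of $D_n$. For $\omega = r$, Remark~\ref{rmk:rotate} directly gives $\pi^:_{r \cdot M} = r \cdot \pi^:_M$, which depends on $M$ only through $\pi^:_M$. For $\omega = s$, Lemma~\ref{lem:reflect} expresses $\pi^:_{s \cdot M}(e_{s(i)})$ case-by-case purely in terms of $\pi^:_M(e_i)$ and $(\pi^:_M)^{-1}(e_i)$, so the same conclusion holds. The inductive step is immediate: if $\omega = w \cdot \omega'$ with $w \in \{r,s\}$, the inductive hypothesis yields $\pi^:_{\omega' \cdot M} = \pi^:_{\omega' \cdot M'}$, and applying the generator case to $w$ with the pair $\omega' \cdot M, \omega' \cdot M'$ gives $\pi^:_{\omega \cdot M} = \pi^:_{\omega \cdot M'}$.

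With the claim in hand, both inclusions follow. For $\omega \cdot \Omega_P \subseteq \Omega_{\omega \cdot P}$: if $M \in \Omega_P$ then $\pi^:_M = \pi^:_P$, so $\pi^:_{\omega \cdot M} = \pi^:_{\omega \cdot P}$ by the claim, giving $\omega \cdot M \in \Omega_{\omega \cdot P}$. For the reverse inclusion, given $N \in \Omega_{\omega \cdot P}$ I would set $M := \omega^{-1} \cdot N$ and apply the claim to $\omega^{-1}$ and the pair $N, \omega \cdot P$ to obtain $\pi^:_M = \pi^:_{\omega^{-1} \cdot (\omega \cdot P)} = \pi^:_P$, giving $M \in \Omega_P$ and hence $N = \omega \cdot M \in \omega \cdot \Omega_P$. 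The only real work is in the base cases of the induction, which amount to verifying that the right-hand sides of the formulas in Remark~\ref{rmk:rotate} and Lemma~\ref{lem:reflect} involve $M$ only through $\pi^:_M$; both are stated in exactly this form, so the proposition follows.
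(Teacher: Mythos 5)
Your proposal is correct and takes essentially the same approach as the paper: the paper's proof is a one-line appeal to Remark~\ref{rmk:rotate} and Lemma~\ref{lem:reflect} to conclude $\{\omega \cdot M : \pi^:_M = \pi^:_P\} = \{M : \pi^:_{\omega \cdot M} = \pi^:_{\omega \cdot P}\}$, and your induction on the word length of $\omega$ in the generators $r,s$ just makes explicit the reduction to generators that the paper leaves implicit.
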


\begin{proof}
    Let {$\omega \in D_n$}, then $\omega \cdot \Omega_P = \{\omega \cdot M : \pi^:_M = \pi^:_P\}$. It follows from Remark~\ref{rmk:rotate} and Lemma~\ref{lem:reflect} that 
    \begin{equation*}
        \omega \cdot \Omega_P = \{\omega \cdot M : \pi^:_M = \pi^:_P\} = \{M : \pi^:_{\omega \cdot M} = \pi^:_{\omega \cdot P}\} = \Omega_{\omega \cdot P}.
    \end{equation*}
\end{proof}

If the decorated permutation corresponding to a positroid $P$ consists of a single cycle, then every matroid in $\Omega_P$ is $2$-connected. {We will prove this by using the the following two propositions.}

\begin{prop} [Proposition 4.1.2 in \cite{oxley2006matroid}] \label{prop:equiv-2connect}
    {Define a relation $\xi$ on the ground-set $E(M)$ of a matroid $M$ by $e \, \xi \, f$ if either $e = f$, or $M$ has a circuit containing $\{e,f\}$. For every matroid $M$, the relation $\xi$ is an equivalence relation on $E(M)$.}
\end{prop}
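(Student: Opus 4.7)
Reflexivity holds by the definition of $\xi$, since $e = e$ for every $e \in E(M)$. Symmetry is equally immediate, because the condition ``$M$ has a circuit containing $\{e,f\}$'' is manifestly symmetric in $e$ and $f$. The entire substance of the proposition therefore lies in transitivity.

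To establish transitivity, I would suppose $e \xi f$ and $f \xi g$ and reduce to the case where $e,f,g$ are pairwise distinct (if any two coincide the conclusion is instant). Fix circuits $C_1$ with $\{e,f\} \subseteq C_1$ and $C_2$ with $\{f,g\} \subseteq C_2$; the goal is to produce a circuit of $M$ containing both $e$ and $g$. If $g \in C_1$ then $C_1$ already suffices, and similarly if $e \in C_2$, so I may further assume $e \notin C_2$ and $g \notin C_1$, which forces $C_1 \neq C_2$.

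The workhorse would be the strong circuit elimination axiom: for any distinct circuits $C,D$ with $x \in C \cap D$ and $y \in C \setminus D$, there exists a circuit $C' \subseteq (C \cup D) \setminus \{x\}$ with $y \in C'$. Applying this to the pair $(C_1,C_2)$ with the shared element $x = f$ and the retained element $y = e$ yields a circuit $C_3 \subseteq (C_1 \cup C_2) \setminus \{f\}$ with $e \in C_3$. If $g \in C_3$, then $C_3$ is the desired circuit and the proof is complete.

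The main obstacle is the remaining case, in which $g \notin C_3$. I would dispatch it by a minimality argument: among all pairs $(C_1,C_2)$ of circuits witnessing the hypotheses (and still satisfying $e \notin C_2$, $g \notin C_1$), choose one minimizing the measure $|C_1 \cup C_2|$, and derive a contradiction. The idea is that $C_3$ is a circuit with $e \in C_3$ and $f, g \notin C_3$; since $C_1 \setminus \{f\}$ and $C_2 \setminus \{f\}$ are both independent, $C_3$ cannot be contained in either $C_1$ or $C_2$, so $C_3$ meets $C_2 \setminus C_1$ in some element $h \notin \{e,f,g\}$. A second application of strong circuit elimination — this time to $(C_3, C_2)$ with shared element $h$ and retained element $g$ — produces a further circuit $C_4$ containing $g$ and contained in $(C_1 \cup C_2) \setminus \{h\}$. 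A short case analysis on whether $e$ and $f$ lie in $C_4$ then yields in each case a new witnessing pair whose union is strictly smaller than $|C_1 \cup C_2|$, contradicting the choice of $(C_1, C_2)$. The delicate point, and the step where care is most needed, is verifying that each branch of this case analysis genuinely produces a valid pair of circuits witnessing the two $\xi$-relations with strictly smaller union, rather than merely a pair that has drifted away from the required structure.
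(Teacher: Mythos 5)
The paper does not actually prove this statement; it is quoted verbatim from Oxley (Proposition 4.1.2) and used as a black box, so there is no in-paper argument to compare against. Your reduction to transitivity, the first application of strong circuit elimination producing $C_3$ with $e \in C_3 \subseteq (C_1 \cup C_2) \setminus \{f\}$, and the observation that $C_3$ must meet $C_2 \setminus C_1$ in some $h \notin \{e,f,g\}$ are all correct and match the standard proof.

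The gap is exactly where you flag it, and it is genuine rather than routine: with the minimality taken over pairs of circuits \emph{both containing $f$}, the final case analysis does not close. After the second elimination you obtain $C_4 \ni g$ with $C_4 \subseteq (C_2 \cup C_3)\setminus\{h\}$; if $e \in C_4$ you are done, and if $f \in C_4$ then $(C_1, C_4)$ is a smaller witnessing pair, but in the branch $e, f \notin C_4$ there is no circuit in sight containing $\{e,f\}$ or $\{f,g\}$ with which to form a new pair through $f$, and iterating the elimination only produces further circuits missing $f$. The fix is to weaken what you minimize: take $(C_1,C_2)$ minimizing $|C_1 \cup C_2|$ over \emph{all} pairs of intersecting circuits with $e \in C_1$ and $g \in C_2$ (not necessarily meeting in $f$). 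Then $(C_3, C_2)$ is itself such a pair (they share $h$) with $C_3 \cup C_2 \subseteq C_1 \cup C_2$, so minimality forces $C_3 \cup C_2 = C_1 \cup C_2$, i.e.\ $C_1 \setminus C_2 \subseteq C_3$. Symmetrically, eliminating $f$ while retaining $g$ gives a circuit $C_3' \ni g$ with $C_3' \subseteq (C_1 \cup C_2)\setminus\{f\}$ and $C_2 \setminus C_1 \subseteq C_3'$. Then $h \in C_3 \cap C_3'$, so $(C_3, C_3')$ is an admissible pair with $C_3 \cup C_3' \subseteq (C_1 \cup C_2)\setminus\{f\}$, contradicting minimality. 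Without this adjustment of the minimization, your proposed induction does not terminate.
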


\begin{prop} [Proposition 4.1.3 in \cite{oxley2006matroid}] \label{prop:circ-2connect}
    {A matroid $M$ is $2$-connected if and only if, for every pair of distinct elements of $E(M)$, there is a circuit containing both.}
\end{prop}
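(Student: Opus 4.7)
The plan is to prove the two directions separately, using Proposition~\ref{prop:equiv-2connect} as the main tool for the forward direction.

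For the forward direction, I would assume $M$ is $2$-connected and aim to show that, for any two distinct $e, f \in E(M)$, we have $e \, \xi \, f$, where $\xi$ is the relation from Proposition~\ref{prop:equiv-2connect}. Suppose for contradiction that $\xi$ has at least two equivalence classes on $E(M)$. Pick one class $X$ and set $Y = E(M) \setminus X$; both are nonempty. The key claim is that $\lambda_M(X) = 0$, i.e., $r_M(X) + r_M(Y) = r_M(E(M))$. To establish this, I would take bases $B_X$ of $M|X$ and $B_Y$ of $M|Y$ and argue that $B_X \cup B_Y$ is independent in $M$. If not, it contains some circuit $C$; since $B_X$ and $B_Y$ are individually independent, $C$ must meet both $X$ and $Y$, yielding elements $x \in C \cap X$ and $y \in C \cap Y$ lying on a common circuit, so $x \, \xi \, y$ — contradicting that $X$ and $Y$ are unions of distinct $\xi$-classes. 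Hence $r_M(E(M)) \geq |B_X| + |B_Y| = r_M(X) + r_M(Y)$, and the reverse inequality is submodularity, giving equality. Thus $(X, Y)$ is a $1$-separation of $M$, contradicting $2$-connectedness.

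For the converse, I would assume every pair of distinct elements lies on a common circuit and show $M$ is $2$-connected. If $|E(M)| \leq 1$ the conclusion is trivial, so assume $|E(M)| \geq 2$ and suppose towards contradiction that $M$ admits a $1$-separation $(X, Y)$. Then $r_M(X) + r_M(Y) = r_M(E(M))$ with both $X, Y$ nonempty, which I would use to deduce $M = (M|X) \oplus (M|Y)$: concretely, the union of any independent set in $M|X$ with any independent set in $M|Y$ remains independent (by the rank equality and submodularity), so circuits of $M$ are precisely circuits of $M|X$ together with circuits of $M|Y$, each contained entirely in $X$ or entirely in $Y$. Picking any $e \in X$ and $f \in Y$ then gives a pair lying on no common circuit, contradicting the hypothesis.

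The main obstacle is the rank equality in the forward direction: showing that a $\xi$-equivalence class $X$ satisfies $r_M(X) + r_M(E(M) \setminus X) = r_M(E(M))$. The independence of $B_X \cup B_Y$ is the crux, and it rests on the observation that any circuit witnessing a dependence would link an element of $X$ to an element of $Y$ via $\xi$, which is forbidden. Once that is in hand, both directions follow cleanly by matching the $1$-separation structure of $M$ with the block structure of circuits.
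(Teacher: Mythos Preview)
Your argument is correct and is essentially the standard proof one finds in Oxley. Note, however, that the paper does not actually prove this proposition: it is quoted as Proposition~4.1.3 from \cite{oxley2006matroid} and stated without proof, so there is no in-paper argument to compare against. Your approach---identifying the $\xi$-classes with the connected components by showing that a nontrivial $\xi$-class $X$ yields a $1$-separation $(X,E\setminus X)$ via the independence of $B_X\cup B_Y$, and conversely that a $1$-separation forces $M=(M|X)\oplus(M|Y)$ so that no circuit can cross it---is exactly the textbook route.
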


\begin{prop} \label{prop:class-2connect}
    Let $P$ be a positroid on {$E = \{e_1 < e_2 < \dots < e_n\}$}. Suppose that $\pi^:_P$ consists of a single cycle. Then {for all} $M \in \Omega_P$, $M$ is $2$-connected.
\end{prop}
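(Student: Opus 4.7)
The plan is to use the circuit characterization of $2$-connectedness provided by Propositions~\ref{prop:equiv-2connect} and~\ref{prop:circ-2connect}: it suffices to show that every pair of distinct elements of $E(M)$ lies on a common circuit of $M$. My strategy is to prove directly, using only the definition of the decorated permutation, that for every $e_i \in E$ the elements $e_i$ and $\pi^:_M(e_i)$ lie on a common circuit of $M$; the equivalence relation $\xi$ and the hypothesis that $\pi^:_M = \pi^:_P$ is a single $n$-cycle then propagate this to all pairs.

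First I would observe that a decorated permutation consisting of a single cycle on $n \geq 2$ letters has no fixed points, so $P$ (and therefore every $M \in \Omega_P$) has neither loops nor coloops. Consequently, for each $e_i \in E$, the value $\pi^:_M(e_i)$ is computed by the ``otherwise'' case: $\pi^:_M(e_i)$ is the $\leq_i$-minimum element $e_j$ with $e_i \in \cl_M([e_{i+1},e_j])$. Fix $e_i$ and let $e_j = \pi^:_M(e_i)$. From $e_i \in \cl_M([e_{i+1},e_j])$ together with $e_i \notin [e_{i+1},e_j]$, we extract a circuit $C$ of $M$ with $e_i \in C \subseteq \{e_i\} \cup [e_{i+1},e_j]$. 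I claim $e_j \in C$: otherwise, if $e_j \neq e_{i+1}$, then $C$ would be contained in $\{e_i\} \cup [e_{i+1},e_{j'}]$ for the $\leq_i$-predecessor $e_{j'}$ of $e_j$, giving $e_i \in \cl_M([e_{i+1},e_{j'}])$ and contradicting the minimality of $e_j$; and if $e_j = e_{i+1}$, then $e_j \notin C$ forces $C = \{e_i\}$, making $e_i$ a loop, which is excluded.

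Hence for every $e_i \in E$ we have $e_i \, \xi \, \pi^:_M(e_i)$. Since $\pi^:_M$ is a single $n$-cycle, iterating gives $e_i \, \xi \, (\pi^:_M)^k(e_i)$ for all $k$, and these iterates exhaust $E$. By Proposition~\ref{prop:equiv-2connect}, the relation $\xi$ is an equivalence relation, so every pair of elements of $E$ is $\xi$-equivalent, i.e., lies on a common circuit of $M$. Proposition~\ref{prop:circ-2connect} then yields that $M$ is $2$-connected. The only mildly technical step is the argument that the extracted circuit actually passes through $e_j = \pi^:_M(e_i)$; everything else follows mechanically from the cyclic structure of $\pi^:_M$ and the transitivity of $\xi$.
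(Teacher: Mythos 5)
Your proof is correct and follows essentially the same route as the paper: both reduce to showing that each $e_i$ and $\pi^:_M(e_i)$ lie on a common circuit and then invoke Propositions~\ref{prop:equiv-2connect} and~\ref{prop:circ-2connect} together with the single-cycle structure. The only difference is that you supply the closure/minimality argument justifying that the extracted circuit actually contains $\pi^:_M(e_i)$, a step the paper's proof asserts without detail.
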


\begin{proof}
    Let {$e_i \in E$}, then {$E = \{ e_i, \pi^:(e_i), (\pi^:)^2(e_i), \dots, (\pi^:)^{n-1}(e_i) \}$}. {It follows that for any fixed matroid $M \in \Omega_P$,} {for all} $0 \leq j < n-1$, $(\pi^:)^j(e_i)$ and $(\pi^:(i_k))^{j+1}$ lie on a common circuit. {Thus, by Proposition}~\ref{prop:equiv-2connect}, {any pair of elements of $E$ lie on a common circuit of $M$. Therefore, by Proposition}~\ref{prop:circ-2connect}, $M$ is $2$-connected.
\end{proof}

We can now show that if a decorated permutation contains two cycles on crossing subsets, then the corresponding positroid contains a minor isomorphic to $U^2_4$ and is therefore non-binary.

\begin{lemma} \label{lem:pos-u2_4-minor}
    Let $P$ be a positroid and {$\{e_a < e_b < e_c < e_d\} \subseteq E(P)$}. Suppose that $\pi^:_P$ contains two disjoint cycles $\pi^:_1$ and $\pi^:_2$, on $X_1$ and $X_2$ respectively, such that {$e_a,e_c \in X_1$} and {$e_b,e_d \in X_2$}. Then $P$ contains a minor isomorphic to $U^2_4$.
\end{lemma}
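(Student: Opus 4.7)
The plan is to construct an ordered matroid $M$ in the positroid envelope class $\Omega_P$ that has a minor $N$ on $\{e_a, e_b, e_c, e_d\}$ whose positroid envelope is $U^2_4$, and then invoke Corollary~\ref{cor:pos-uni-minor-contain} to conclude that $U^2_4$ is a minor of $P$. To build $M$, let $P_1$ and $P_2$ be the positroids on $X_1$ and $X_2$ (with the orderings inherited from $E(P)$) that correspond, under the bijection in Figure~\ref{fig:commute-triangle}, to the decorated permutations $\pi^:_1$ and $\pi^:_2$, respectively. Since $\pi^:_1$ and $\pi^:_2$ are disjoint cycles of $\pi^:_P$, the map $\pi^:_P$ stabilizes $Y := E(P) \setminus (X_1 \cup X_2)$, so its restriction to $Y$ is a decorated permutation; let $R$ be the corresponding positroid on $Y$. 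By iteratively applying Remark~\ref{rmk:dec-perm-direct-sum}, the direct sum $M := P_1 \oplus P_2 \oplus R$ satisfies $\pi^:_M = \pi^:_P$, so $M \in \Omega_P$.

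Next I would exploit that each $\pi^:_i$ is a single cycle. By Proposition~\ref{prop:class-2connect}, each $P_i$ is $2$-connected, hence loopless and coloopless. Proposition~\ref{prop:circ-2connect} then furnishes a circuit $C_1 \subseteq X_1$ of $P_1$ containing both $e_a$ and $e_c$, and a circuit $C_2 \subseteq X_2$ of $P_2$ containing both $e_b$ and $e_d$. Since $P_1 | C_1 \cong U^{|C_1|-1}_{|C_1|}$, the further contraction $(P_1 | C_1)/(C_1 \setminus \{e_a, e_c\})$ is a minor of $P_1$ on $\{e_a, e_c\}$ isomorphic to $U^1_2$; symmetrically, $P_2$ has a minor on $\{e_b, e_d\}$ isomorphic to $U^1_2$. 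First deleting $Y$ from $M$ and then performing these operations inside the $P_1$ and $P_2$ summands produces a minor $N$ of $M$ on $\{e_a, e_b, e_c, e_d\}$ isomorphic to $U^1_2 \oplus U^1_2$, with parallel pairs $\{e_a, e_c\}$ and $\{e_b, e_d\}$.

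A direct calculation from the definition of the decorated permutation, via closures under the cyclic orderings $\leq_i$, then shows that $\pi^:_N = (e_a\ e_c)(e_b\ e_d)$, which is exactly the decorated permutation of $U^2_4$ on an ordered four-element set. By the bijection between positroids and decorated permutations, the positroid envelope of $N$ is $U^2_4$. Applying Corollary~\ref{cor:pos-uni-minor-contain} to $M \in \Omega_P$ with minor $N$ then yields that $U^2_4$ is a minor of $P$, completing the proof. The main task is not a conceptual obstacle but rather the bookkeeping in the construction of $M$: one must verify that $\pi^:_P$ stabilizes $Y$ (immediate from the disjointness of the cycles) and that Remark~\ref{rmk:dec-perm-direct-sum} extends inductively to three summands, both of which are routine.
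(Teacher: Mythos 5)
Your proposal is correct and follows essentially the same route as the paper: build a direct sum $M\in\Omega_P$ from positroids realizing the disjoint cycles, use $2$-connectivity of $P_1$ and $P_2$ (via Proposition~\ref{prop:class-2connect}) to extract circuits through $\{e_a,e_c\}$ and $\{e_b,e_d\}$, restrict and contract to get a $U^1_2\oplus U^1_2$ minor with decorated permutation $(e_a\,e_c)(e_b\,e_d)$, and transfer this back to $P$ via the weak-map machinery. The only cosmetic difference is that you invoke Corollary~\ref{cor:pos-uni-minor-contain} as a packaged step, whereas the paper applies Lemma~\ref{lem:pos-minor-contain} and redoes the basis-counting argument inline.
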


\begin{proof}
    Take {$\pi^:_P = \bigsqcup^n_{i=1} \pi^:_i$}, where {$\{\pi^:_i\}^n_{i=1}$} is the set of disjoint cycles in $\pi^:_P$. Then, for all {$i \in [n]$}, {$\pi^:_i$} is a decorated permutation on {$X_i \subset E(P)$}, where {$\{X_i\}^n_{i=1}$} is a partition of $E(P)$, and there exist positroids {$\{P_i\}^n_{i=1}$} such that {$\pi^:_{P_i} = \pi^:_i$}. Thus, {$M = \bigoplus^n_{i=1} P_i \in \Omega_P$}. By restricting to $E(P_1 \oplus P_2)$, we obtain $P_1 \oplus P_2$ as a minor of $M$, where {$E(P_1) = X_1$}, $E(P_2) = X_2$, $\pi^:_{P_1} = \pi^:_1$, and $\pi^:_{P_2} = \pi^:_2$. As $\pi^:_1$ and $\pi^:_2$ both consist of single cycles, by Proposition~\ref{prop:class-2connect}, both $P_1$ and $P_2$ are $2$-connected matroids. Then, there exist circuits $C_1 \in \mathcal{C}(P_1)$ and $C_2 \in \mathcal{C}(P_2)$ such that {$e_a,e_c \in C_1$} and {$e_b,e_d \in C_2$}. Therefore, {$N_1 = (P_1 \rvert C_1)/(C_1 \setminus \{e_a,e_c\}) \cong U^1_2$} and {$N_2 = (P_2 \rvert C_2)/(C_2 \setminus \{e_b,e_d\}) \cong U^1_2$} are minors of $P_1$ and $P_2$ respectively, so $N_1 \oplus N_2$ is a minor of $P_1 \oplus P_2$. Furthermore, {$\pi^:_{N_1} = (e_a,e_c)$} and {$\pi^:_{N_2} = (e_b,e_d)$}, so by Remark~\ref{rmk:dec-perm-direct-sum}, {$\pi^:_{N_1 \oplus N_2} = (e_a,e_c)(e_b,e_d)$}. Hence, there exists a unique positroid $Q$ such that {$\pi^:_Q = (e_a,e_c)(e_b,e_d)$} and $Q \cong U^2_4$. Then, by Lemma~\ref{lem:pos-minor-contain}, there exists a positroid $K$ that is a minor of $P$ such that $E(K) = E = E(Q)$ and $\mathbb{1} : K \to Q$ {is a rank-preserving weak map}. Thus,
    \[
        {{E \choose 2}} = \mathcal{B}(Q) \subseteq \mathcal{B}(K) \subseteq {{E \choose 2}}.
    \]
    Therefore, $\mathcal{B}(K) \cong U^2_4$.
\end{proof}

We now present some connectivity results for rank-preserving weak maps that we apply to direct sum decompositions of positroid envelope classes in Proposition~\ref{prop:direct-sum-class-decomp}.

\begin{lemma} \label{lem:weak-k-separate}
    Let $M$ and $N$ be matroids {on $E$} such that $\mathbb{1} : M \to N$ is a rank-preserving weak map. Suppose that for $X \subseteq E$, $(X,E\setminus X)$ is a {$k$-separation} of $M$. Then $(X,E)$ is a {$k$-separation} of $N$. 
\end{lemma}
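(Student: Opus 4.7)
The plan is to reduce this directly to the rank inequality already extracted in the proof of Proposition~\ref{n-connect}. Recall that the proof of that proposition invokes Proposition 9.1.2(e) of~\cite{white1986theory} to get $r_M(Y) \geq r_N(Y)$ for every $Y \subseteq E$, and then uses the rank-preserving hypothesis $r_M(E) = r_N(E)$ to conclude that $\lambda_M(Y) \geq \lambda_N(Y)$ for every $Y \subseteq E$. This inequality is really all that is needed here.

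Given the hypothesis that $(X, E \setminus X)$ is a $k$-separation of $M$, the definition splits into two conditions: first, $\min\{|X|, |E \setminus X|\} \geq k$, and second, $\lambda_M(X) < k$. The first condition depends only on the ground-set $E$, which is shared by $M$ and $N$, so it transfers to $N$ for free. For the second, I would apply the inequality $\lambda_N(X) \leq \lambda_M(X)$ from Proposition~\ref{n-connect}'s proof to obtain $\lambda_N(X) \leq \lambda_M(X) < k$. Combining these two conclusions gives that $(X, E \setminus X)$ is a $k$-separation of $N$.

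There is no real obstacle here: the statement is essentially the contrapositive formulation of Proposition~\ref{n-connect} at the level of $k$-separations rather than $n$-connectivity, and the one-line proof simply repackages the monotonicity $\lambda_N \leq \lambda_M$ that was already observed. (I would also note in passing that the displayed conclusion in the lemma statement appears to read ``$(X,E)$ is a $k$-separation of $N$'', which should be understood as $(X, E \setminus X)$, matching the hypothesis.)
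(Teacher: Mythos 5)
Your proposal is correct and follows essentially the same route as the paper's own proof: both reduce to the monotonicity $r_N(Y)\leq r_M(Y)$ for all $Y\subseteq E$ (which the paper rederives from $\mathcal{I}(N)\subseteq\mathcal{I}(M)$ rather than citing Proposition~\ref{n-connect}), combine it with rank-preservation to get $\lambda_N(X)\leq\lambda_M(X)<k$, and observe that the cardinality condition transfers for free. Your parenthetical remark about the typo ``$(X,E)$'' in the statement is also correct; it should read $(X,E\setminus X)$.
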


\begin{proof}
    As $\mathbb{1}: M \to N$ is a weak map, $\mathcal{I}(N) \subseteq \mathcal{I}(M)$, hence {for all} $X \in E$, $r_N(X) \leq r_M(X)$. By hypothesis, $(X,E)$ is a {$k$-separation} of $M$, so $\min\{ \lvert X \rvert, \lvert E\setminus X \rvert \} \geq k$ and
    \begin{align*}
        k > \lambda_M(X_1) &= r_M(X_1) + r_M(X_2) - r_M(M)\\
        &\geq r_N(X_1) + r_N(X_2) - r_N(N) = \lambda_N(X_1).
    \end{align*}
    Therefore, $(X,E\setminus X)$ is a {$k$-separation} of $N$.
\end{proof}
It follows that if the image of a rank-preserving weak map is $k$-connected, then the preimage is $k$-connected. Furthermore, if an ordered matroid is $k$-connected, then its positroid envelope is $k$-connected.
\begin{cor}\label{cor:weak-k-connected}
    Let $M$ and $N$ be matroids such that $\mathbb{1} : M \to N$ is a rank-preserving weak map. If $N$ is $k$-connected, then $M$ is $k$-connected.
\end{cor}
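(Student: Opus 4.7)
The statement is an immediate consequence of the lemma that directly precedes it, so the plan is to argue by contrapositive. Suppose $M$ is not $k$-connected. By definition, this means there exists some $j \in [k-1]$ and a subset $X \subseteq E$ such that $(X, E \setminus X)$ is a $j$-separation of $M$. I would then apply Lemma~\ref{lem:weak-k-separate} to this very separation: since $\mathbb{1}: M \to N$ is a rank-preserving weak map, the lemma yields that $(X, E \setminus X)$ is also a $j$-separation of $N$. But then $N$ fails to be $k$-connected, contradicting the hypothesis.

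There are essentially no obstacles; the entire content of the corollary has been loaded into Lemma~\ref{lem:weak-k-separate}, so the proof is one line of unpacking definitions plus one invocation of the lemma. The only minor care needed is making sure the quantifier over $j$ is handled correctly: the definition of $k$-connectedness requires the absence of a $j$-separation \emph{for every} $j \in [k-1]$, so one must phrase the contrapositive as ``there exists some $j$'' rather than fixing $j = k$. I would also note that this corollary is essentially a restatement of Proposition~\ref{n-connect} earlier in the excerpt, but now derived cleanly from the separation-level transfer given by Lemma~\ref{lem:weak-k-separate} rather than from the rank inequality directly.
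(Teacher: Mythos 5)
Your proof is correct and matches the paper's intent exactly: the corollary is stated there without proof as an immediate consequence of Lemma~\ref{lem:weak-k-separate}, and your contrapositive unpacking (with the correct quantification over $j \in [k-1]$) is precisely the argument being left implicit.
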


\begin{cor}
    Let $M$ be an ordered matroid with positroid envelope $P$. If $M$ is $k$-connected, then $P$ is $k$-connected.
\end{cor}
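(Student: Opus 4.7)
The plan is to recognize this statement as an immediate consequence of the two preceding results, by tracing the direction of the weak map carefully. By Corollary~\ref{cor:pos-env-weak-map}, for any ordered matroid $M$ with positroid envelope $P$, the identity map $\mathbb{1}: P \to M$ is a rank-preserving weak map. So the source of the weak map is $P$ and the target is $M$. Now Corollary~\ref{cor:weak-k-connected} asserts that for any rank-preserving weak map $\mathbb{1}: A \to B$, if the target $B$ is $k$-connected then the source $A$ is $k$-connected.

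The first step, then, is simply to invoke Corollary~\ref{cor:pos-env-weak-map} to obtain the rank-preserving weak map $\mathbb{1}: P \to M$. The second step is to apply Corollary~\ref{cor:weak-k-connected} with $A = P$ and $B = M$: since $M$ is assumed $k$-connected, we conclude that $P$ is $k$-connected, as required. There is no substantive obstacle here; the only thing to watch is matching the orientation of the weak map correctly. If one prefers, one can unwind the invocation of Corollary~\ref{cor:weak-k-connected} and give a contrapositive argument directly from Lemma~\ref{lem:weak-k-separate}: any $j$-separation $(X, E \setminus X)$ of $P$ with $j < k$ would transfer along $\mathbb{1}: P \to M$ to a $j$-separation of $M$, contradicting $k$-connectedness of $M$.
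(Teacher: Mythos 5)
Your proposal is correct and matches the paper's intended derivation exactly: the paper states this corollary without proof as an immediate consequence of Corollary~\ref{cor:pos-env-weak-map} (giving the weak map $\mathbb{1}\colon P \to M$) combined with Corollary~\ref{cor:weak-k-connected} (connectivity of the target transfers to the source). You have traced the orientation of the weak map correctly, which is the only point where one could go wrong.
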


We define the \emph{direct sum of positroid envelope classes}, a binary operation on positroid envelope classes that is the analogue of direct sums of matroids. We then use this construction to present a direct sum decomposition of positroid envelope classes.

\begin{defi}
Let $M$ and $N$ be matroids on subsets of a totally ordered set $X$, with positroid envelopes $P$ and $Q$ respectively. Suppose that $E(M)$ and $E(N)$ are disjoint. Then we define the \emph{direct sum of envelope classes} $\Omega_P$ and $\Omega_Q$ as
\begin{equation*}
    \Omega_P \oplus \Omega_Q = \{M' \oplus N' : M' \in \Omega_P, N' \in \Omega_Q\}. 
\end{equation*}
\end{defi}

\begin{prop} \label{prop:direct-sum-class-decomp}
    Let $P$ and $Q$ be positroids on disjoint, non-crossing subsets $X_1$ and $X_2$ of a totally ordered set $X$. Then
    \begin{equation*}
        \Omega_{P \oplus Q} = \Omega_P \oplus \Omega_Q.
    \end{equation*}
\end{prop}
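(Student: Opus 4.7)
The plan is to prove set equality by showing both inclusions.

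The inclusion $\Omega_P \oplus \Omega_Q \subseteq \Omega_{P \oplus Q}$ is immediate from Remark~\ref{rmk:dec-perm-direct-sum}: for any $M' \in \Omega_P$ and $N' \in \Omega_Q$,
\[
\pi^:_{M' \oplus N'} = \pi^:_{M'} \sqcup \pi^:_{N'} = \pi^:_P \sqcup \pi^:_Q = \pi^:_{P \oplus Q},
\]
so $M' \oplus N' \in \Omega_{P \oplus Q}$. Note that $P \oplus Q$ is itself a positroid by Theorem~\ref{thm:pos-direct-sum} (this is where the non-crossing hypothesis enters), so the class $\Omega_{P \oplus Q}$ is well-defined.

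For the reverse inclusion, I would fix $M \in \Omega_{P \oplus Q}$; in particular $E(M) = X_1 \cup X_2$. By Corollary~\ref{cor:pos-env-weak-map}, $\mathbb{1}: P \oplus Q \to M$ is a rank-preserving weak map, and comparing ranks this forces $\mathcal{B}(M) \subseteq \mathcal{B}(P \oplus Q)$. Every basis of $P \oplus Q$ has exactly $r_P(X_1)$ elements in $X_1$ and $r_Q(X_2)$ elements in $X_2$, so the same holds for every basis of $M$. A direct rank count then yields $r_M(X_1) = r_P(X_1)$ and $r_M(X_2) = r_Q(X_2)$, and hence
\[
\lambda_M(X_1) = r_M(X_1) + r_M(X_2) - r_M(E(M)) = 0.
\]
Thus $(X_1,X_2)$ is a $1$-separation of $M$; since the $2$-connected components of $M$ partition $E(M)$ and each side of a $1$-separation must be a union of such components, we obtain the direct-sum decomposition $M = M_1 \oplus M_2$, where $M_1 := M \setminus X_2$ and $M_2 := M \setminus X_1$.

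To close the loop, I would apply Remark~\ref{rmk:dec-perm-direct-sum} one more time: $\pi^:_M = \pi^:_{M_1} \sqcup \pi^:_{M_2}$. Comparing this with $\pi^:_M = \pi^:_P \sqcup \pi^:_Q$ and observing that $\pi^:_{M_1}, \pi^:_P$ are both decorated permutations on $X_1$ while $\pi^:_{M_2}, \pi^:_Q$ are both on $X_2$, the uniqueness of the disjoint-union description forces $\pi^:_{M_1} = \pi^:_P$ and $\pi^:_{M_2} = \pi^:_Q$. Therefore $M_1 \in \Omega_P$, $M_2 \in \Omega_Q$, and $M = M_1 \oplus M_2 \in \Omega_P \oplus \Omega_Q$. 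The main (and essentially only) obstacle is justifying the decomposition $M = M_1 \oplus M_2$ for an arbitrary $M \in \Omega_{P \oplus Q}$; everything else is formal bookkeeping with decorated permutations via Remark~\ref{rmk:dec-perm-direct-sum}. This obstacle dissolves once one notes that the basis containment $\mathcal{B}(M) \subseteq \mathcal{B}(P \oplus Q)$ pins the ranks $r_M(X_1)$ and $r_M(X_2)$ to their $P \oplus Q$-values, so the connectivity function $\lambda_M(X_1)$ must vanish.
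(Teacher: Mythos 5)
Your proof is correct and follows essentially the same route as the paper: one inclusion via Remark~\ref{rmk:dec-perm-direct-sum}, and the other by showing that any $M\in\Omega_{P\oplus Q}$ has a $1$-separation along $(X_1,X_2)$ and then matching decorated permutations on each side. The only cosmetic difference is that where you re-derive the vanishing of $\lambda_M(X_1)$ by a direct rank count from $\mathcal{B}(M)\subseteq\mathcal{B}(P\oplus Q)$, the paper simply invokes Lemma~\ref{lem:weak-k-separate} applied to the rank-preserving weak map $\mathbb{1}:P\oplus Q\to M$.
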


\begin{proof}
    {Let $R \in \Omega_P \oplus \Omega_Q$ be arbitrary. Then, there exist $M \in \Omega_P$ and $N \in \Omega_Q$ such that $R = M \oplus N$. By Remark}~\ref{rmk:dec-perm-direct-sum},
    \begin{equation*}
        {\pi^:_R = \pi^:_{M \oplus N} = \pi^:_M \sqcup \pi^:_N = \pi^:_{P \oplus Q}.}
    \end{equation*}
    {Thus, $R \in \Omega_{P \oplus Q}$, and therefore $\Omega_P \oplus \Omega_Q \subseteq \Omega_{P \oplus Q}$.}

    Suppose instead that {$R \in \Omega_{P \oplus Q}$ is arbitrary}. Then, by Lemma~\ref{lem:weak-k-separate}, we have that $(E(P),E(Q))$ is a {$1$-separation} of {$R$}. Let {$M = R|E(P)$} and {$N = R|E(Q)$}, then {$R = M \oplus N$}. Furthermore, {$\pi^:_M = \pi^:_P$} and {$\pi^:_N = \pi^:_Q$}. {Thus, $R = M \oplus N \in \Omega_P \oplus \Omega_Q$, and therefore $\Omega_{P \oplus Q} \subseteq \Omega_P \oplus \Omega_Q$.}
\end{proof}

\subsubsection{Positroid varieties}

For a field $\mathbb{F}$ and nonnegative integers $k < n$, the \emph{Grassmannian} $\Gr(k,\mathbb{F}^n)$ is the collection of all $k$-dimensional linear subspaces of the vector space $\mathbb{F}^n$. Let $V \in \Gr(k,\mathbb{F}^n)$, then $V$ is the row space of a full-rank $k \times n$ matrix $A$. For two distinct full-rank $k \times n$ matrices $A$ and $B$ with entries over $\mathbb{F}$, whose row spaces are both $V$, we have $M(A) = M(B)$. Thus, we may unambiguously define the ordered matroid $M_V = M(A)$ where $A$ is any matrix whose row space is the linear subspace $V$. For a rank-$k$ ordered matroid $M$ on $n$ elements, we define the \emph{matroid stratum} in $\Gr(k,\mathbb{F}^n)$ as
\begin{equation*}
    S_M(\mathbb{F}) := \{V \in \Gr(k,\mathbb{F}^n) : M_V = M\}.
\end{equation*}

The \emph{totally nonnegative Grassmannian} $\Gr^{\geq 0}(k,\mathbb{R}^n)$ consists of the points in the Grassmannian $\Gr(k,\mathbb{R}^n)$ that can be realized by matrices with entries over $\mathbb{R}$ with all nonnegative maximal minors.The \emph{positroid cell} corresponding to the ordered matroid $M$ is defined as
\begin{equation*}
    S_M^{\geq 0}(\mathbb{R}) := S_M(\mathbb{R}) \cap \Gr^{\geq 0}(k,\mathbb{R}^n).
\end{equation*}
The \emph{positroid stratification} of the totally nonnegative {Grassmannian} $\Gr^{\geq 0}(k,\mathbb{R}^n)$ is given by the intersection of the matroid strata with $\Gr(k,\mathbb{R}^n)$, which is exactly the collection of positroid cells. Note that an ordered matroid is a positroid if and only if its corresponding positroid cell is non-empty.

For a positroid $P$, its \emph{open positroid variety} is defined as
\begin{equation*}
    \Pi^{\circ}_{\mathbb{F}}(P) := \{V \in \Gr(k,\mathbb{F}^n) : M_V \in \Omega_P\} = \bigsqcup_{M \in \Omega_P} S_M,
\end{equation*}
and its positroid variety $\Pi_{\mathbb{F}}(P) = \overline{\Pi^{\circ}_{\mathbb{F}}(P)}$.

\begin{cor}[Corollary 5.12 in~\cite{knutson2013positroid}] \label{cor:pos-variety}
    Let $P$ be a positroid. Then, as sets,
    \begin{equation*}
        \Pi_{\mathbb{R}}(P) = \overline{S_P} = \{V \in \Gr(k,\mathbb{R}^n) : I \notin \mathcal{I}(P) \Rightarrow \Delta_I(V) = 0\}.
    \end{equation*}
\end{cor}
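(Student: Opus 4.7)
The plan is to prove the two equalities by establishing a cycle of three inclusions. Let $V_0 := \{V \in \Gr(k,\mathbb{R}^n) : I \notin \mathcal{I}(P) \Rightarrow \Delta_I(V) = 0\}$. The inclusion $\overline{S_P} \subseteq \Pi_{\mathbb{R}}(P)$ is immediate: since $P \in \Omega_P$, the stratum $S_P$ appears as a summand in $\Pi^{\circ}_{\mathbb{R}}(P) = \bigsqcup_{M \in \Omega_P} S_M$, and closing both sides preserves the inclusion.

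For $\Pi_{\mathbb{R}}(P) \subseteq V_0$, I would first observe that $V_0$ is Zariski-closed in $\Gr(k,\mathbb{R}^n)$, being cut out by the vanishing of finitely many Plücker coordinates, so it suffices to verify $\Pi^{\circ}_{\mathbb{R}}(P) \subseteq V_0$. Given $V \in S_M$ with $M \in \Omega_P$, Corollary~\ref{cor:pos-env-weak-map} supplies a rank-preserving weak map $\mathbb{1}: P \to M$, which yields $\mathcal{B}(M) \subseteq \mathcal{B}(P)$. Since a $k$-subset $I$ satisfies $\Delta_I(V) \neq 0$ if and only if $I \in \mathcal{B}(M_V) = \mathcal{B}(M)$, any $I \notin \mathcal{I}(P)$ (equivalently, any such $I$ that fails to be a basis of $P$) satisfies $\Delta_I(V) = 0$, and the inclusion follows.

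The remaining inclusion $V_0 \subseteq \overline{S_P}$ carries the main technical weight. My approach would be a deformation argument: given $V \in V_0$ represented by a $k \times n$ matrix $A$, select a totally nonnegative realization $A_P$ of $P$ (which exists by definition of positroid) and form the one-parameter family $A_t := A + t A_P$ as $t \to 0^+$. For $I \in \mathcal{B}(P)$, the top-degree coefficient in $t$ of the polynomial $\Delta_I(A_t)$ equals $\Delta_I(A_P) > 0$, so some power of $t$ carries a positive contribution; for $I \notin \mathcal{B}(P)$, the extreme coefficients $\Delta_I(A)$ and $\Delta_I(A_P)$ both vanish by the hypothesis on $V$ and the fact that $A_P$ realizes $P$.

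The principal obstacle is that vanishing of the extreme coefficients does not by itself force $\Delta_I(A_t) \equiv 0$ as a polynomial in $t$ when $I \notin \mathcal{B}(P)$; the intermediate mixed-minor coefficients need not vanish for arbitrary $A$, and unless they do, the matroid of $A_t$ may acquire spurious bases outside $\mathcal{B}(P)$ for small $t > 0$. Ruling this out is the crux of the proof and requires exploiting the non-crossing structure of positroids from Theorem~\ref{thm:pos-crossing}: the Plücker relations combined with these non-crossing conditions force the ideal of $V_0$ to be generated by $\{\Delta_I : I \notin \mathcal{B}(P)\}$, so that the linear pencil from $A$ to $A_P$ remains inside $V_0$. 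In the cited Corollary 5.12 of~\cite{knutson2013positroid}, this step is handled through the theory of cyclic rank matrices, which gives a direct combinatorial description of $\overline{S_P}$ coinciding with $V_0$; I would invoke that treatment rather than reconstruct it here.
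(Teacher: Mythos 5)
This corollary is imported verbatim from \cite{knutson2013positroid}; the paper offers no proof of it, so there is no internal argument to compare yours against. Your first two inclusions are correct and essentially forced: $\overline{S_P}\subseteq\Pi_{\mathbb{R}}(P)$ is immediate from $P\in\Omega_P$, and $\Pi_{\mathbb{R}}(P)\subseteq V_0$ follows because $V_0$ is closed (in both the Zariski and the classical topology) and, for every $M\in\Omega_P$, Theorem~\ref{thm:pos-intersect} gives $\mathcal{B}(M)\subseteq\mathcal{B}(P)$, so each Pl\"ucker coordinate indexed by a $k$-set not in $\mathcal{B}(P)$ vanishes on $S_M$.

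The genuine gap is the inclusion $V_0\subseteq\overline{S_P}$, which is where all of the content of the corollary lives, and your argument for it does not close. The pencil $A_t=A+tA_P$ fails for exactly the reason you flag: for $I\notin\mathcal{B}(P)$ only the extreme coefficients of $\det\bigl(A_I+t(A_P)_I\bigr)$ are forced to vanish, so $M(A_t)$ can acquire bases outside $\mathcal{B}(P)$ and $A_t$ need not lie in $S_P$ for small $t>0$. The proposed repair is not a repair: the assertion that the ideal of $V_0$ is generated by $\{\Delta_I : I\notin\mathcal{B}(P)\}$ is essentially the definition of $V_0$ as a subscheme of the Grassmannian and says nothing about the particular pencil $t\mapsto A_t$, which is affine-linear in the matrix entries but of degree $k$ in the Pl\"ucker coordinates, so there is no reason for it to remain inside $V_0$; moreover Theorem~\ref{thm:pos-crossing} concerns circuits and cocircuits of $P$, not the defining ideal of $V_0$. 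What $V_0\subseteq\overline{S_P}$ actually requires is that the locus cut out by the non-basis Pl\"ucker coordinates is irreducible with $S_P$ dense in it --- a substantial theorem of Knutson--Lam--Speyer proved via cyclic rank matrices and Frobenius splitting. Your final sentence simply invokes that theorem, which is the very statement being proved; so, stripped of the flawed deformation, your proof of the third inclusion reduces to the citation, which is all the paper itself does.
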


\section{Graph constructions}\label{sec:graph-constr}

In this section, we show that for any decorated permutation $\pi^:$, there exists a planar, {edge-ordered} graph $G$ such that $\pi^:=\pi^:_{M(G)}$. Since fixed points of $\pi^:$ are represented by self-loops or leaf-edges anywhere in $G$, and distinct cycles of the permutation may be represented by distinct {connected} components of $G$, the non-trivial part of this proof is in the following lemma, which deals with representing single {$n$-cycles}.

\begin{lemma} \label{lem:matrix-draw-alg}
Let $\pi $ be an {$n$-cycle} on {$E = \{ e_1 < e_2 < \dots < e_n \}$}. Then, there exists a $2$-connected planar, {edge-ordered} graph $G$ such that $\pi=\pi^:_{M(G)}$.
\end{lemma}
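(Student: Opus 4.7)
My approach is to construct $G$ explicitly, by induction on $n$.

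First, I reinterpret the target equation graphically: $\pi^:_{M(G)}(e_i) = e_j$ amounts to the assertion that the two endpoints of $e_i$ lie in distinct components of the subgraph on edge set $[e_{i+1}, e_{j-1}]$ (under $\leq_i$) but in the same component of the subgraph on $[e_{i+1}, e_j]$. Thus the task is to build a planar $2$-connected graph whose cyclic connectivity pattern realizes $\pi$.

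For the base case $n = 2$, the unique $2$-cycle $(e_1, e_2)$ is realized by two parallel edges between a pair of vertices. For the inductive step with $n \geq 3$, I use dihedral invariance (Remark~\ref{rmk:rotate}) to rotate $\pi$ to a canonical form in which a convenient element $e_i$ --- chosen to minimize the cyclic offset $\pi(e_i) - i \bmod n$ --- sits in a fixed position. Removing $e_i$ yields a smaller decorated permutation $\pi'$ to which the inductive hypothesis applies, producing a planar $2$-connected graph $G'$; I then reinsert $e_i$ into $G'$ as a parallel edge, a subdivision (series extension), or a chord along the outer face of a fixed planar embedding of $G'$. The formulas for decorated permutations under series-parallel connections and $2$-sums (Lemma~\ref{dec-perm-2sum} and the companion result immediately following it) then verify that $\pi^:_{M(G)} = \pi$. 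Planarity is preserved because each insertion is a local modification of the outer face, and $2$-connectedness is preserved because the insertions correspond to series-parallel extensions of a $2$-connected base (cf.\ Proposition~\ref{n-connect}).

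The main obstacle is the reduction step: when $\pi$ has no cyclic offset equal to $1$ after any rotation (i.e.\ no element of $E$ is cyclically adjacent to its image under $\pi$), there is no immediate local parallel or series reduction available. In such cases I would invoke the canonical $2$-sum decomposition of the corresponding positroid guaranteed by Proposition~\ref{2sum:pos-canon-tree}, splitting $\pi$ into decorated permutations on two non-crossing subsets of $E$, inductively realizing each by a planar $2$-connected graph, and then gluing the two graphs along a shared edge to form a graph-theoretic $2$-sum. Showing that this gluing preserves planarity and gives the correct decorated permutation uses Proposition~\ref{2sum:pos-construct} together with the $2$-sum formula from Lemma~\ref{dec-perm-2sum}. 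A careful case analysis on the minimal cyclic offset, using dihedral invariance to normalize at each step, then completes the induction.
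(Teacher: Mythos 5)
There is a genuine gap, and it sits exactly at the hard case of the lemma. Your induction has two reduction mechanisms: a local series/parallel reduction when some element has cyclic offset $1$, and, failing that, a $2$-sum decomposition via Proposition~\ref{2sum:pos-canon-tree}. But Proposition~\ref{2sum:pos-canon-tree} only decomposes a $2$-connected positroid that is \emph{not} $3$-connected; when the positroid $P$ with $\pi^:_P=\pi$ is $3$-connected, its canonical tree decomposition is the single vertex $P$ itself and no splitting into non-crossing pieces exists. Such cases occur already for single $n$-cycles: the $5$-cycle $\pi(e_i)=e_{i+2}$ is the decorated permutation of $U^2_5$, which is $3$-connected and non-graphic, has no element of offset $1$, and admits no $2$-sum decomposition. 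Your induction therefore has no move available, yet the lemma is true for this $\pi$ (the paper's construction produces a graph $G$ with $\pi^:_{M(G)}=\pi$ even though $M(G)\neq U^2_5$). These $3$-connected cases are precisely the ones where the envelope class contains a graphic matroid other than the positroid itself, i.e.\ the whole content of the lemma, so they cannot be excluded or deferred.

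Two further points would need attention even in the cases your induction does reach. First, reinserting $e_i$ as ``a chord along the outer face'' is not a series or parallel extension, so neither Lemma~\ref{dec-perm-2sum} nor the series-parallel formulas verify that the resulting decorated permutation is $\pi$; this is exactly where a real argument is required and it is not supplied. Second, $2$-connectedness of the glued graph is a graph-theoretic fact about the specific insertion, not a consequence of Proposition~\ref{n-connect}, which concerns weak maps. For comparison, the paper avoids induction entirely: it gives a direct grid embedding of $G$ together with a simultaneous embedding of its dual $G^*$, and reads off $\pi^:_{M(G)}$ from greedy increasing and decreasing subsequences of $\pi^i(n)$. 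If you want to salvage an inductive argument, you would need a reduction that applies to arbitrary $n$-cycles, including those whose associated positroid is $3$-connected.
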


\begin{proof}
{Without loss of generality we will assume that $E = [n]$}. We create our graph $G$ as follows. Let
\[ V(G)=\{ v_i : 1 \leq i {< n},\; {\pi^{i-1}(n) > \pi^i(n}) \} {\cup \{v_n\}}, \]
and let the directed, ordered edge-set of $G$ be $E(G) = [n]$.

For each {$1 \leq i < n$},
\begin{itemize}
    \item[(i)] {let $f_R(i) = \min \left\{ k \in [i+1,n] : \pi^k(n) <_n \pi^i(n) \right\}$; and}
    \item[(ii)] {let $f_L(i) = \max \left\{ k \in [1,i] : \pi^i(n) < \pi^{k-1}(n) \right\}$.}
\end{itemize}
{We will build a directed graph for ease of construction, although the final output graph is undirected. For $1 \leq i < n$,} the directed edge {$\pi^i(n)$} goes from {$\pi^i(n)_{out} = v_{f_R(i)}$} to {$\pi^i(n)_{in} = v_{f_L(i)}$}. {The edge $n$ goes from $n_{out} = v_1$ to $n_{in} = v_n$. By construction,}
\[ {\left\{ v_{f_{R(i)}} : 1 \leq i < n \right\} \cup \left\{ v_{f_{L(i)}} : 1 \leq i < n \right\} = V(G).} \]

We embed $G$ in the plane as follows. {Note that $(a,b)$ indicates the entry in the $(a+1)$-th row indexed from the bottom, and the $(b+1)$-th column indexed from the left.} In an {$(n+1)\times (n+1)$} grid, put the label {$n$} in boxes {$(0,n)$} and {$(n,0)$}. Then, for {$1 \leq i <n$}, put the label {$\pi^i(n)$} in the box {$(\pi^i(n),i)$}. {For each vertex $v_i \in V(G)$,} draw $v_i$ {vertically,} stretching from the bottom right corner of {$(\pi^{i-1}(n),i-1)$} to the top left corner of {$(\pi^i(n),i)$}. {For $1 \leq i < n$, draw the edge $\pi^i(n)$} horizontally from {$\pi^i(n)_{out}$} to the bottom right corner of the box {labeled $\pi^i(n)$}, then {diagonally} to its top left corner, and {finally} horizontally to vertex {$\pi^i(n)_{in}$}. The edge {$n$} goes in to the bottom right corner of the box {$(n,0)$}, diagonally to its top left corner, loops around outside of the grid, and comes out of the top left corner of {$(0,n)$}, as shown in Figure~\ref{fig:greedyplanarexample}. By construction, this is a planar embedding of $G$. 

We now embed $G^*$ by putting the label {$n$} in the boxes $(0,0)$ and {$(n,n)$}, and perform the left-right mirror image of the embedding algorithm. The graph $G^*$ has edge set {$E(G^*) = \{ i^* : 1 \leq i \leq n \}$} and vertex set 
\[ V(G^*)= \left\{ v_i^* : 1 < i {\leq n,\; \pi^{i-1}(n) < \pi^i(n)} \right\} {\cup \{ v^*_1 \}} . \]

We first claim that edges of $G$ and $G^*$ only cross when an edge $i$ crosses $i^*$ in the box that is labeled $i$, for {$1\leq i < n$}, and the edge {$n$} crosses {$n^*$} outside of the grid. {Observe} that {$n$} and {$n^*$} have no other crossings.  Note that outside of labeled boxes, edges are horizontal on gridlines. Consider the gridline between two rows {$i$} and {$i+1$}, for some {$1 \leq i < n$}. Without loss of generality, suppose that the box {labeled $i$} is left of {the box labeled $i+1$}. Then all edges of $G$ on the gridline are either left of {the box $i$} or right of {the box $i+1$}, while edges of $G^*$ on the gridline are right of {the box $i$} and left of {the box $i+1$}. Therefore, there are no crossings of edges in $G$ and $G^*$ other than those inside labeled boxes.

We argue that $G^*$ is indeed the dual of $G$, by showing a bijection between vertices of $G^*$ and faces of $G$ (which contain them). Note that by construction every face of $G$ must have at least one vertex of $G^*$ contained in it. Furthermore, if $v^*_i \in \{v^*_1,v^*_n\}$, then $v^*_i$ is the unique vertex of $G^*$ in the face of $G$ that contains it.

{Now,} consider a vertex {$v_i^* \in V(G^*) \setminus \{v^*_1,v^*_n\}$}. {We claim that there exist two internally disjoint, decreasing greedy subsequence of}
\[
    {\pi^{f_L(i)-1}(n),\pi^{f_L(i)}(n),\dots,\pi^{f_R(i-1)}(n)}
\]
{that contain}
\[
    {\left\{\pi^{f_L(i)-1}(n),\pi^i(n),\pi^{f_R(i-1)}(n)\right\}}
\]
{and} 
\[
    {\left\{\pi^{f_L(i)-1}(n),\pi^{i-1}(n),\pi^{f_R(i-1)}(n)\right\}}
\]
{respectively. If we remove $\pi^{f_L(i)-1}(n)$ and $\pi^{f_R(i-1)(n)}$ from both greedy subsequences, then they correspond to two edge-disjoint, reverse-directed paths from $v_{f_R(i-1)}$ to $v_{f_L(i)}$. Therefore, these two paths form a cycle.} Furthermore, for the dual edge $j^*$ in $G^*$ of each edge $j$ on the path {containing $\pi^i(n)$} we have $j_{out}=v_i^*$. Similarly, for the dual edge $j^*$ in $G^*$ of each edge $j$ on the path {containing $\pi^{i-1}(n)$} we have $j_{in}=v_i^*$. {It follows that} the vertex $v_i^*$ uniquely lies inside the face bounded by this cycle.

All that remains to be shown is that {$\pi = \pi_G$}. By duality, we only need to consider any {$\pi^i(n) \mapsto \pi^{i+1}(n)$} such that {$\pi^i(n) > \pi^{i+1}(n)$}. In $G$, the two endpoints of the edge {$\pi^i(n)$} are {$\pi^i(n)_{out} = v_{f_R(i)} = v_{i+1}$} and $\pi^i(n)_{in} = v_{f_L(i)}$. First, we claim that the edge {$\pi^{i+1}(n)$} is the smallest edge with respect to {$\pi^i(n)$} that is incident to {$\pi^i(n)_{out}$}. We {have that $f_L(i+1) = i+1$} and therefore {$\pi^{i+1}(n)_{in} = \pi^i(n)_{out}$}. For any other {$j \neq i+1$} such that {$\pi^j(n)_{in} = \pi^i(n)_{out}$}, we must have {$f_L(j) = i+1$}, and therefore {$\pi^{i+1}(n) < \pi^j(n) < \pi^i(n)$}. For any {$j \neq i$} such that {$\pi^j(n)_{out} = \pi^i(n)_{out}$}, we must have {$f_R(j) = i+1$}, which also implies that {$\pi^{i+1}(n) < \pi^j(n) < \pi^i(n)$}. Therefore, {$\pi_{M(G)}(\pi^i(n)) \geq_{\pi^i(n)} \pi^{i+1}(n)$}. Consider the following cycle. Find a directed path from the edge {$\pi^i(n)$} to {the} edge {$n$} by taking the greedy increasing subsequence of {$\pi^i(n),\pi^{i-1}(n),\dots,n$}. Then, find a reverse directed path from {$\pi^{i+1}(n)$} to {$n$} by taking the greedy decreasing subsequence of {$\pi^{i+1}(n),\pi^{i+2}(n),\dots,n$}. Clearly, the largest edge on this cycle with respect to $\leq_{\pi^i(n)}$ is $\pi^{i+1}(n)$, and therefore {$\pi_{M(G)}(\pi^i(n))=\pi^{i+1}(n)$}. This completes the proof.
\end{proof}

\begin{figure}
    \centering

\begin{tikzpicture}[scale=0.55]
\draw [fill=black!10,draw=white] (0,8) rectangle (1,9);
\draw [fill=black!10,draw=white] (1,3) rectangle (2,4);
\draw [fill=black!10,draw=white] (2,2) rectangle (3,3);
\draw [fill=black!10,draw=white] (3,5) rectangle (4,6);
\draw [fill=black!10,draw=white] (4,4) rectangle (5,5);
\draw [fill=black!10,draw=white] (5,6) rectangle (6,7);
\draw [fill=black!10,draw=white] (6,7) rectangle (7,8);
\draw [fill=black!10,draw=white] (7,1) rectangle (8,2);
\draw [fill=black!10,draw=white] (8,0) rectangle (9,1);

\foreach \a in {0,...,9}{
	\draw [black!10,line width=1pt] (\a,0) -- (\a,9);
}
\foreach \a in {0,...,9}{
	\draw [black!10,line width=1pt] (0,\a) -- (9,\a);
}

\draw [blue!40,line width=3pt,line cap=round] (0,9)  -- (-1,9);
\draw [blue!40,line width=3pt,line cap=round] (-1,-1)  -- (-1,9);
\draw [blue!40,line width=3pt,line cap=round] (-1,-1)  -- (9,-1);
\draw [blue!40,line width=3pt,line cap=round] (9,-1)  -- (9,0);

\draw [blue!40,line width=3pt,line cap=round] (0,9) -- (1,8);
\draw [blue!40,line width=3pt,line cap=round] (3,2) -- (1,4);
\draw [blue!40,line width=3pt,line cap=round] (3,2) -- (7,2);
\draw [blue!40,line width=3pt,line cap=round] (9,0) -- (7,2);
\draw [blue!40,line width=3pt,line cap=round,<-] (1,6) -- (3,6);
\draw [blue!40,line width=3pt,line cap=round] (3,6) -- (5,4);
\draw [blue!40,line width=3pt,line cap=round] (7,4) -- (5,4);
\draw [blue!40,line width=3pt,line cap=round,<-] (1,7) -- (5,7);
\draw [blue!40,line width=3pt,line cap=round] (6,6) -- (5,7);
\draw [blue!40,line width=3pt,line cap=round] (6,6) -- (7,6);
\draw [blue!40,line width=3pt,line cap=round,<-] (1,8) -- (6,8);
\draw [blue!40,line width=3pt,line cap=round] (7,7) -- (6,8);

\draw [fill=black!10,draw=black!10] (.5,8.5) ellipse (.3cm and .3cm);
\draw [fill=black!10,draw=black!10] (8.5,0.5) ellipse (.3cm and .3cm);
\draw [fill=black!10,draw=black!10] (1.5,3.5) ellipse (.3cm and .3cm);
\draw [fill=black!10,draw=black!10] (2.5,2.5) ellipse (.3cm and .3cm);
\draw [fill=black!10,draw=black!10] (3.5,5.5) ellipse (.3cm and .3cm);
\draw [fill=black!10,draw=black!10] (4.5,4.5) ellipse (.3cm and .3cm);
\draw [fill=black!10,draw=black!10] (5.5,6.5) ellipse (.3cm and .3cm);
\draw [fill=black!10,draw=black!10] (6.5,7.5) ellipse (.3cm and .3cm);
\draw [fill=black!10,draw=black!10] (7.5,1.5) ellipse (.3cm and .3cm);

\draw (.5,8.5) node{\Large{8}};
\draw (8.5,.5) node{\Large{8}};
\draw (1.5,3.5) node{\Large{3}};
\draw (2.5,2.5) node{\Large{2}};
\draw (3.5,5.5) node{\Large{5}};
\draw (4.5,4.5) node{\Large{4}};
\draw (5.5,6.5) node{\Large{6}};
\draw (6.5,7.5) node{\Large{7}};
\draw (7.5,1.5) node{\Large{1}};

\draw [fill=black!80,draw=black!80] (1,6) ellipse (.1cm and 2cm);
\draw [fill=black!80,draw=black!80] (2,3) ellipse (.1cm and .1cm);
\draw [fill=black!80,draw=black!80] (4,5) ellipse (.1cm and .1cm);
\draw [fill=black!80,draw=black!80] (7,4.5) ellipse (.1cm and 2.5cm);
\draw [fill=black!80,draw=black!80] (8,1) ellipse (.1cm and .1cm);

\draw (4,-2) node{(a)};

\begin{scope}[xshift=8cm,yshift=3cm,rotate=-45]
    \tikzset{->-/.style={decoration={
  markings,
  mark=at position #1 with {\arrow{>}}},postaction={decorate}}}

    \draw[blue!40,line width=1pt,->-=0.5] (3,5) to (0,7);
    \draw[blue!40,line width=1pt,->-=0.5] (3,3) to [bend left] (0,7);
    \draw[blue!40,line width=1pt,->-=0.5] (6,7) to (0,7);
    \draw[blue!40,line width=1pt,->-=0.5] (6,7) to [bend right] (0,7);
    \draw[blue!40,line width=1pt,->-=0.5] (6,7) to (3,5);
    \draw[blue!40,line width=1pt,->-=0.5] (6,7) to [bend left] (3,3);
    \draw[blue!40,line width=1pt,->-=0.5] (9,7) to (6,7);
    \draw[blue!40,line width=1pt,->-=0.5] (0,7) to [out=-90,in=-90,distance=8cm] (9,7);

    \draw[fill=black] (0,7) circle (4pt);
    \draw[fill=black] (3,5) circle (4pt);
    \draw[fill=black] (3,3) circle (4pt);
    \draw[fill=black] (6,7) circle (4pt);
    \draw[fill=black] (9,7) circle (4pt);

    \draw (1.3,3.6) node{\small{3}};
    \draw (1.1,5.8) node{\small{5}};
    \draw (3,6.5) node{\small{6}};
    \draw (4.5,0.5) node{\small{8}};
    \draw (4.7,3.6) node{\small{2}};
    \draw (4.9,5.8) node{\small{4}};
    \draw (3,8.3) node{\small{7}};
    \draw (7.5,6.5) node{\small{1}};

    \draw (-0.4,7.4) node{\small{$v_1$}};
    \draw (3,4.5) node{\small{$v_4$}};
    \draw (3,2.5) node{\small{$v_2$}};
    \draw (6,7.5) node{\small{$v_7$}};
    \draw (9,7.5) node{\small{$v_8$}};

    \draw (8,1) node{(b)};
\end{scope}

\begin{scope}[yshift=-12cm]

\draw [fill=black!10,draw=white] (0,0) rectangle (1,1);
\draw [fill=black!10,draw=white] (1,3) rectangle (2,4);
\draw [fill=black!10,draw=white] (2,2) rectangle (3,3);
\draw [fill=black!10,draw=white] (3,5) rectangle (4,6);
\draw [fill=black!10,draw=white] (4,4) rectangle (5,5);
\draw [fill=black!10,draw=white] (5,6) rectangle (6,7);
\draw [fill=black!10,draw=white] (6,7) rectangle (7,8);
\draw [fill=black!10,draw=white] (7,1) rectangle (8,2);
\draw [fill=black!10,draw=white] (8,8) rectangle (9,9);

\foreach \a in {0,...,9}{
	\draw [black!10,line width=1pt] (\a,0) -- (\a,9);
}
\foreach \a in {0,...,9}{
	\draw [black!10,line width=1pt] (0,\a) -- (9,\a);
}

\draw [red!50,line width=3pt,dashed] (0,0) -- (0,-1);
\draw [red!50,line width=3pt,dashed] (0,-1) -- (10,-1);
\draw [red!50,line width=3pt,dashed] (10,-1) -- (10,9);
\draw [red!50,line width=3pt,dashed] (9,9) -- (10,9);

\draw [red!50,line width=3pt,dashed] (0,0) -- (1,1);
\draw [red!50,line width=3pt,dashed] (1,3) -- (2,4);
\draw [red!50,line width=3pt,dashed] (3,4) -- (2,4);
\draw [red!50,line width=3pt,dashed] (3,5) -- (4,6);
\draw [red!50,line width=3pt,dashed] (5,6) -- (4,6);
\draw [red!50,line width=3pt,dashed] (5,6) -- (6,7);
\draw [red!50,line width=3pt,dashed] (6,7) -- (7,8);
\draw [red!50,line width=3pt,dashed] (8,8) -- (7,8);
\draw [red!50,line width=3pt,dashed] (8,8) -- (9,9);
\draw [red!50,line width=3pt,dashed] (1,2) -- (2,2);
\draw [red!50,line width=3pt,dashed] (3,3) -- (2,2);
\draw [red!50,line width=3pt,dashed] (3,4) -- (4,4);
\draw [red!50,line width=3pt,dashed] (4,4) -- (5,5);
\draw [red!50,line width=3pt,dashed] (1,1) -- (7,1);
\draw [red!50,line width=3pt,dashed] (8,2) -- (7,1);

\draw [fill=black!10,draw=black!10] (.5,.5) ellipse (.3cm and .3cm);
\draw [fill=black!10,draw=black!10] (8.5,8.5) ellipse (.3cm and .3cm);
\draw [fill=black!10,draw=black!10] (1.5,3.5) ellipse (.3cm and .3cm);
\draw [fill=black!10,draw=black!10] (2.5,2.5) ellipse (.3cm and .3cm);
\draw [fill=black!10,draw=black!10] (3.5,5.5) ellipse (.3cm and .3cm);
\draw [fill=black!10,draw=black!10] (4.5,4.5) ellipse (.3cm and .3cm);
\draw [fill=black!10,draw=black!10] (5.5,6.5) ellipse (.3cm and .3cm);
\draw [fill=black!10,draw=black!10] (6.5,7.5) ellipse (.3cm and .3cm);
\draw [fill=black!10,draw=black!10] (7.5,1.5) ellipse (.3cm and .3cm);

\draw (.5,.5) node{\Large{$8^*$}};
\draw (8.5,8.5) node{\Large{$8^*$}};
\draw (1.5,3.5) node{\Large{$3^*$}};
\draw (2.5,2.5) node{\Large{$2^*$}};
\draw (3.5,5.5) node{\Large{$5^*$}};
\draw (4.5,4.5) node{\Large{$4^*$}};
\draw (5.5,6.5) node{\Large{$6^*$}};
\draw (6.5,7.5) node{\Large{$7^*$}};
\draw (7.5,1.5) node{\Large{$1^*$}};

\draw [fill=black!70,draw=black!70] (1,2) ellipse (.1cm and 1cm);
\draw [fill=black!70,draw=black!70] (3,4) ellipse (.1cm and 1cm);
\draw [fill=black!70,draw=black!70] (5,5.5) ellipse (.1cm and .5cm);
\draw [fill=black!70,draw=black!70] (6,7) ellipse (.1cm and .1cm);
\draw [fill=black!70,draw=black!70] (8,5) ellipse (.1cm and 3cm);

\draw (4,-2) node{(c)};
\end{scope}

\begin{scope}[xshift=14.5cm,yshift=-14cm,rotate=45]

    \tikzset{->-/.style={decoration={
  markings,
  mark=at position #1 with {\arrow{>}}},postaction={decorate}}}
    
    \draw[red!50,line width=1pt,->-=0.5] (0,4) to [bend left] (2.5,4);
    \draw[red!50,line width=1pt,->-=0.5] (0,4) to [bend right] (2.5,4);
    \draw[red!50,line width=1pt,->-=0.5] (2.5,4) to [bend left] (5,4);
    \draw[red!50,line width=1pt,->-=0.5] (2.5,4) to [bend right] (5,4);
    \draw[red!50,line width=1pt,->-=0.5] (5,4) to (7.5,4);
    \draw[red!50,line width=1pt,->-=0.5] (7.5,4) to (10,4);
    \draw[red!50,line width=1pt,->-=0.5] (0,4) to [out=-45,in=-135] (10,4);
     \draw[red!50,line width=1pt,->-=0.5] (0,4) to [out=-90,in=-90,distance=5cm] (10,4);

    \draw[fill=black] (0,4) circle (4pt);
    \draw[fill=black] (2.5,4) circle (4pt);
    \draw[fill=black] (5,4) circle (4pt);
    \draw[fill=black] (7.5,4) circle (4pt);
    \draw[fill=black] (10,4) circle (4pt);

    \draw (1.5,3.3) node{\small{$2^*$}};
    \draw (1.5,4.7) node{\small{$3^*$}};
    \draw (3.7,3.2) node{\small{$4^*$}};
    \draw (3.7,4.8) node{\small{$5^*$}};
    \draw (6.2,4.5) node{\small{$6^*$}};
    \draw (8.7,4.5) node{\small{$7^*$}};
    \draw (5,1.4) node{\small{$1^*$}};
    \draw (5,-0.3) node{\small{$8^*$}};

    \draw (-0.2,4.6) node{\small{$v^*_1$}};
    \draw (2.5,3.5) node{\small{$v^*_3$}};
    \draw (5,3.5) node{\small{$v^*_5$}};
    \draw (7.5,3.5) node{\small{$v^*_6$}};
    \draw (10.2,4.5) node{\small{$v^*_8$}};

    \draw (1.5,-0.5) node{(d)};

\end{scope}
\end{tikzpicture}
    \caption{{(a) Graph $G$, with $\pi_{M(G)} = (8,3,2,5,4,6,7,1)$. (b) Redrawing of $G$. (c) Graph $G^*$, with $\pi_{(M(G))^*} = (\pi_{M(G)})^{-1}$. (d) Redrawing of $G^*$.}}
    \label{fig:greedyplanarexample}
\end{figure}

\begin{example}
    {Consider the permutation $\pi = (8,3,2,5,4,6,7,1)$. Figure}~\ref{fig:greedyplanarexample}(a) {depicts the graph $G$ obtained by applying the algorithm from Lemma}~\ref{lem:matrix-draw-alg} {to $\pi$, where $\pi_{M(G)} = (8,3,2,5,4,6,7,1)$. The vertex-set of $G$ is given by}
    \[
        {V(G) = \{v_i : 1 \leq i < 8, \pi^{i-1}(n) > \pi^{i}(n)\} \cup \{v_8\}  = \{v_1,v_2,v_4,v_7,v_8\},}
    \]
    {and the edge-set is $E(G) = \{1,\dots,8\}$. Figure}~\ref{fig:greedyplanarexample}(b) {is a redrawing of the graph $G$. Figure}~\ref{fig:greedyplanarexample}(c) {depicts the graph $G^*$, the dual graph to $G$, obtained by applying the algorithm from Lemma}~\ref{lem:matrix-draw-alg} {to $\pi^{-1}$. We have $\pi_{(M(G))^*} = (\pi_{M(G)})^{-1} = (8,1,7,6,4,5,2,3)$. The vertex-set of $G^*$ is given by}
    \[
        {V(G^*) = \{v^*_i : 1 < i \leq 8, \pi^{i-1}(n) < \pi^i(n)\} \cup \{v^*_1\} = \{v^*_1,v^*_3,v^*_5,v^*_6,v^*_8\},}
    \]
    {and the edge-set is $E(G^*) = \{1^*,\dots,8^*\}$. Figure}~\ref{fig:greedyplanarexample}(d) {is a redrawing of the graph $G^*$.}
\end{example}

{For an arbitrary decorated permutation $\pi^:$, we apply Lemma}~\ref{lem:matrix-draw-alg} {to each disjoint cycle of $\pi^:$ in Theorem}~\ref{thm:graph-build}.

\begin{thm} \label{thm:graph-build}
    Let $\pi^:$ be a decorated permutation. Then, there exists a connected planar graph $G$ such that $\pi^: = \pi^:_{M(G)}$. 
\end{thm}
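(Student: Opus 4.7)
The plan is to reduce the general case to Lemma~\ref{lem:matrix-draw-alg} via the cycle decomposition of $\pi^:$. Write $\pi^: = \pi_1^: \sqcup \pi_2^: \sqcup \cdots \sqcup \pi_k^:$, where each $\pi_j^:$ is either a single cycle of $\pi^:$ of length at least $2$ on its support $X_j \subseteq E$, or a single fixed point of $\pi^:$. For each non-trivial cycle $\pi_j^:$, I apply Lemma~\ref{lem:matrix-draw-alg} (using the total order on $X_j$ inherited from $E$) to produce a $2$-connected planar, edge-ordered graph $G_j$ with edge-set $X_j$ satisfying $\pi^:_{M(G_j)} = \pi_j^:$.

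For each fixed point $e_i$ of $\pi^:$, I construct a one-edge graph $G_i$ as follows. If $\pi^:(e_i) = \underline{e_i}$, let $G_i$ be a single vertex with a self-loop labeled $e_i$, so that $e_i$ is a loop of $M(G_i)$ and $\pi^:_{M(G_i)}(e_i) = \underline{e_i}$. If instead $\pi^:(e_i) = \overline{e_i}$, let $G_i$ be a single edge $e_i$ joining two distinct vertices, so that $e_i$ is a coloop of $M(G_i)$ and $\pi^:_{M(G_i)}(e_i) = \overline{e_i}$. In either case $\pi^:_{M(G_i)} = \pi_i^:$.

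Next I combine the graphs $G_1, \ldots, G_k$ into a single connected planar graph $G$ by identifying one chosen vertex from each $G_j$ with a common vertex $v$. Each $G_j$ is planar, and vertex-identification at a single point preserves planarity because each $G_j$ can be embedded in its own angular sector around $v$. Since identifying a single vertex creates no new cycles, the cycle matroid satisfies $M(G) = M(G_1) \oplus \cdots \oplus M(G_k)$. Iterating Remark~\ref{rmk:dec-perm-direct-sum} then gives $\pi^:_{M(G)} = \pi^:_{M(G_1)} \sqcup \cdots \sqcup \pi^:_{M(G_k)} = \pi^:$, as desired.

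The substantive work has already been done in Lemma~\ref{lem:matrix-draw-alg}; the only remaining subtleties are that vertex-identification at a single point realises the direct sum of cycle matroids and respects planarity, both of which are standard. The edge case in which some $G_j$ consists of a single vertex with a self-loop is handled by simply identifying that vertex with $v$, which preserves both planarity and connectivity.
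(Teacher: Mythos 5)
Your proposal is correct and follows essentially the same route as the paper: decompose $\pi^:$ into disjoint cycles, realize each nontrivial cycle by the planar graph of Lemma~\ref{lem:matrix-draw-alg} and each fixed point by a self-loop or a pendant edge, then wedge the components at a single vertex and invoke Remark~\ref{rmk:dec-perm-direct-sum}. The only cosmetic difference is that the paper justifies $M(G)\cong M(\bigsqcup G_i)$ via Whitney $2$-isomorphism, whereas you observe directly that a one-point identification creates no new cycles; both are standard and equivalent.
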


\begin{proof}
    We write the decorated permutation {$\pi^:$} in disjoint cycle notation, {denoted} $\pi^: = \pi^:_1 \cdots \pi^:_r$, where for all integers $1 \leq i \leq r$, $\pi^:_i$ is a cycle of length $k_i$. {For each $i \in [r]$, we denote the set of elements that $\pi^:_i$ is a permutation of by $S_i$. For a fixed $i \in [r]$, we construct an edge-labeled graph $G_i$ with the property that $\pi^:_{M(G_i)} = \pi^:_i$ as follows.} If {$\pi^:_i$} is a type-$2$ fixed point, then {we take $G_i$ to be the complete graph $K_2$ consisting of two vertices connected by a single edge labeled with the unique element in $S_i$}. Otherwise, we apply Lemma~\ref{lem:matrix-draw-alg} to obtain $G_i$. {It follows that}
    \[
        {\pi^: = \pi^:_1\pi^:_2\dots\pi^:_r = \pi^:_{M(G_1)}\dots\pi^:_{M(G_r)} = \pi^:_{M\left(\bigsqcup G_i\right)}}.
    \]
    Let $\{v_i\}^r_{i=1}$ be a set of vertices such that for each $i \in [r]$, $v_i \in V(G_i)$. Then, $G = \left(\bigsqcup_{i \in [r]} G_i\right)/(v_1 \sim \ldots \sim v_r)$ is connected and Whitney $2$-isomorphic to $\bigsqcup_{i \in [r]} G_i$, so $M(G) \cong M(\bigsqcup_{i \in [r]} G_i)$. Therefore, $\pi^: = \pi^:_{M(G)}$.
\end{proof}

We arrive at the following corollary as an immediate consequence of Theorem~\ref{thm:graph-build}.

\begin{cor}
    Every open positroid variety contains a graphic matroid variety.
\end{cor}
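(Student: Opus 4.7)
The plan is to unpack the definitions and read off the result directly from Theorem~\ref{thm:graph-build}. Let $P$ be a positroid with open positroid variety $\Pi^{\circ}_{\mathbb{F}}(P)$. By definition,
\begin{equation*}
    \Pi^{\circ}_{\mathbb{F}}(P) = \bigsqcup_{M \in \Omega_P} S_M,
\end{equation*}
so to show $\Pi^{\circ}_{\mathbb{F}}(P)$ contains a graphic matroid variety it suffices to exhibit a graphic matroid $M \in \Omega_P$, i.e.\ a graphic matroid whose decorated permutation equals $\pi^:_P$.

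First I would apply Theorem~\ref{thm:graph-build} to the decorated permutation $\pi^:_P$. This produces a connected planar graph $G$ with $\pi^:_{M(G)} = \pi^:_P$. Since membership in $\Omega_P$ is equivalent to agreement of decorated permutations, we conclude $M(G) \in \Omega_P$.

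Finally, I would observe that $M(G)$ is graphic by construction, so $S_{M(G)}$ is a graphic matroid variety, and it is one of the summands in the decomposition above. Hence $S_{M(G)} \subseteq \Pi^{\circ}_{\mathbb{F}}(P)$, which gives the desired containment. There is no real obstacle here: all the substantive work has been done in Theorem~\ref{thm:graph-build}, and this corollary is just a translation between the combinatorial language of decorated permutations and the geometric language of positroid varieties.
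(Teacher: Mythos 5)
Your proposal is correct and matches the paper exactly: the paper states this corollary as an immediate consequence of Theorem~\ref{thm:graph-build}, and your argument is precisely that consequence spelled out via the definition of $\Pi^{\circ}_{\mathbb{F}}(P)$ as the union of strata over $\Omega_P$.
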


\section{Graphic positroids} \label{sec:graphic-pos}

{In this section, we characterize the graphic positroids as those positroids whose envelope classes consist of exactly one matroid.}
{The following equivalences are known for graphic positroids.}

\begin{cor}[Theorem 5.1 in~\cite{blum2001base}, Corollary 4.18 in~\cite{bonin2024characterization}, Corollary 6.4 in~\cite{speyer2021positive}]\label{cor:pos-sp-u2_4}
    The following are equivalent for a positroid $P$.
    \begin{itemize}
        \item[(i)] $P$ is a graphic.
        \item[(ii)] $P$ is binary.
        \item[(iii)] $P$ is regular.
        \item[(iv)] $P$ is a direct sum of series-parallel matroids.
    \end{itemize}
\end{cor}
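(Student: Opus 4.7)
The plan is to establish the cycle of implications $(i) \Rightarrow (iii) \Rightarrow (ii) \Rightarrow (iv) \Rightarrow (i)$. The first two steps require no positroid hypothesis: for $(i) \Rightarrow (iii)$ I would invoke the standard fact, noted in Section~\ref{sec:matroidbasics}, that every graphic matroid is regular. For $(iii) \Rightarrow (ii)$, a regular matroid is by definition $\mathbb{F}$-linear over every field $\mathbb{F}$ and so in particular over $\mathbb{F}_2$, whence it is binary.

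For $(iv) \Rightarrow (i)$ I would check that the class of graphic matroids contains the series-parallel generators $U^0_1$ and $U^1_1$ (a single loop edge and a single non-loop edge) and is closed under series and parallel extensions: a parallel extension $P(M,U^1_2)$ corresponds to duplicating an edge in a representing graph, and a series extension $S(M,U^1_2)$ corresponds to subdividing one. Graphic matroids are trivially closed under direct sum, since one can take disjoint unions of representing graphs, so every direct sum of series-parallel matroids is graphic.

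The substantive step is $(ii) \Rightarrow (iv)$, and this is the only implication that genuinely uses positroidality; without it the claim fails, since for instance $M(K_4)$ is binary but not series-parallel. I would cite Theorem 5.1 of~\cite{blum2001base}, which states precisely that a binary positroid decomposes as a direct sum of series-parallel matroids, with Corollary 4.18 of~\cite{bonin2024characterization} and Corollary 6.4 of~\cite{speyer2021positive} as independent alternatives. A self-contained proof in the language of this paper could use Theorem~\ref{pos-canon-tree-iff} to reduce to showing that no $3$-connected binary positroid on at least four elements exists on a non-crossing ground set. The hard part would then be to exploit the non-crossing structure of such a hypothetical positroid to produce a disjoint circuit and cocircuit whose ground sets cross, contradicting Theorem~\ref{thm:pos-crossing}; this is the delicate combinatorial content hidden inside Blum's theorem, and is the reason one typically defers to the cited references rather than attempting an in-house argument.
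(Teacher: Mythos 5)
Your proposal is correct and matches the paper's treatment: the paper offers no in-house proof of this corollary, simply citing Theorem 5.1 of~\cite{blum2001base}, Corollary 4.18 of~\cite{bonin2024characterization}, and Corollary 6.4 of~\cite{speyer2021positive} for the substantive content, which is exactly the implication $(ii)\Rightarrow(iv)$ you isolate and defer to those same references. The routine implications you fill in (graphic $\Rightarrow$ regular $\Rightarrow$ binary, and direct sums of series-parallel matroids being graphic) are standard and consistent with the facts already recorded in Section~\ref{sec:matroidbasics}.
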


We use the following result in Theorem~\ref{thm:pos-graphic} {to extend the characterization of graphic positroids}.

\begin{lemma} \label{lem:dec-perm-graphic}
    Let $P$ be a $2$-connected graphic positroid on a densely totally ordered set {$E$}. Then, $\pi^:_P$ consists of a single cycle.
\end{lemma}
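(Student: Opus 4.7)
The plan is to invoke Corollary~\ref{cor:pos-sp-u2_4} to reduce to the case where $P$ is a $2$-connected series-parallel matroid, then induct on $|E(P)|$ using a $2$-sum decomposition of $P$ coming from its canonical tree decomposition. The key computational ingredient will be Lemma~\ref{dec-perm-2sum}: I want to show that the $2$-sum of two matroids whose decorated permutations are each single cycles (at a common, non-loop, non-coloop element) again has a single-cycle decorated permutation.

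For the base cases, when $P$ is a single circuit $U^{n-1}_n$ or cocircuit $U^1_n$ on a cyclically ordered set $\{e_1 < e_2 < \dots < e_n\}$, I would compute $\pi^:_P$ directly from the definition. In the circuit case, $\cl([e_{i+1},e_j]) = E(P)$ precisely when the cyclic interval has size at least $n-1$, so $\pi^:_P(e_i) = e_{i-1}$, yielding the single $n$-cycle $(e_1,e_n,e_{n-1},\dots,e_2)$. The cocircuit case then follows from Corollary~\ref{dec-perm-dual}.

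For the inductive step, assuming $P$ is not itself a circuit or cocircuit, its canonical tree decomposition has at least two vertices, each labeled by a circuit or cocircuit (this is the standard characterization of series-parallel matroids, using that every $3$-connected matroid on at least four elements has a $U^2_4$ or $M(K_4)$ minor and so cannot appear by Corollary~\ref{cor:pos-sp-u2_4}). Cutting the tree at an edge yields a $2$-sum decomposition $P = M \oplus_2 N$, and Proposition~\ref{2sum:pos-canon-tree} together with Lemma~\ref{2sum:pos-decomp} let me arrange that $M$ and $N$ are $2$-connected positroids on non-crossing ground sets, each strictly smaller than $P$ and again graphic (being series-parallel). By induction, $\pi^:_M$ and $\pi^:_N$ are each single cycles, both containing the shared element $e$ since $e$ is neither a loop nor a coloop of either factor. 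Writing $\pi^:_M(u) = e$, $\pi^:_M(e) = v$, $\pi^:_N(u') = e$, and $\pi^:_N(e) = v'$, Lemma~\ref{dec-perm-2sum} gives $\pi^:_{M\oplus_2 N}(u) = v'$ and $\pi^:_{M\oplus_2 N}(u') = v$, with all other values inherited from $\pi^:_M$ or $\pi^:_N$. Tracing the orbit from any element then produces the single cycle on $E(M) \cup E(N) \setminus \{e\}$.

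The main obstacle I anticipate is cleanly justifying the structural claim that a $2$-connected series-parallel matroid on at least four elements has only circuit- and cocircuit-labeled vertices in its canonical tree decomposition. The clean route is to combine Tutte's Wheels-and-Whirls theorem (which forces any $3$-connected matroid on at least four elements to have a $U^2_4$ or $M(K_4)$ minor) with Corollary~\ref{cor:pos-sp-u2_4}, which rules out both minors in a graphic positroid.
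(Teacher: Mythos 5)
Your proposal is correct and follows essentially the same route as the paper's proof: decompose $P$ via its canonical tree decomposition into a $2$-sum of circuits and cocircuits on non-crossing ground sets (Theorem~\ref{pos-canon-tree-iff} and Proposition~\ref{2sum:pos-canon-tree}), compute the decorated permutations of the circuit and cocircuit base cases directly, and splice the resulting single cycles together using Lemma~\ref{dec-perm-2sum}. The only differences are organizational---you induct by cutting the tree at one edge rather than peeling off one circuit/cocircuit at a time, and you make explicit (via wheels-and-whirls) the justification for the claim that all tree-vertex labels are circuits or cocircuits, which the paper asserts directly from $P$ being graphic.
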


\begin{proof}
    As $P$ is a $2$-connected positroid, by Theorem~\ref{pos-canon-tree-iff}, $P$ has a $2$-sum decomposition into positroids
    \[
        P = P_1 \oplus_2 (P_2 \oplus_2 \cdots (P_{n-1} \oplus_2 P_n) \cdots ),
    \]
    such that for all $i \in [n-1]$, $E(N_i)$ and $E(P_{i+1})$ are non-crossing subsets of {$E$}, where
    \[
        N_i = P_1 \oplus_2 (P_2 \oplus_2 \cdots (P_{i-1} \oplus_2 P_i) \cdots )
    \]
    is a positroid. Furthermore, as $P$ is graphic, for all $i \in [n]$, $P_i$ is a circuit or a cocircuit. We prove by induction that $\pi^:_P$ consists of a single cycle. Let {$E(P_1) = \{j_1 < j_2 < \cdots < j_k\} \subset E$}. Suppose that $P_1$ is a circuit, then $\pi^:_{P_1} = (j_k,j_{k-1},\ldots,j_1)$. Suppose instead that $P_1$ is a cocircuit, then $\pi^:_{P_1} = (j_1,j_2,\ldots,j_k)$. Now, suppose that for some $i \in [n-1]$, $\pi^:_{N_i}$ consists of a single cycle. Then, as $E(N_i)$ and $E(P_{i+1})$ are non-crossing subsets of {$E$}, by Lemma~\ref{dec-perm-2sum}, we have that $\pi^:_{N_i \oplus_2 P_{i+1}}$ consists of a single cycle. It follows by induction that $\pi^:_P = \pi^:_{N_{n-1} \oplus_2 P_n}$ consists of a single cycle.
\end{proof}

\begin{thm}\label{thm:pos-graphic}
    The following are equivalent for a positroid $P$.
    \begin{itemize}
        \item[(i)] $P$ is graphic,
        \item[(ii)] $P$ is the unique matroid contained in $\Omega_P$,
        \item[(iii)] for every field $\mathbb{F}$, $\Pi^{\circ}_{\mathbb{F}}(P) = S_P(\mathbb{F})$,
        \item[(iv)] $\Pi_{\mathbb{R}}(P)$ is equal to the disjoint union of graphic matroid strata,
    \end{itemize} 
\end{thm}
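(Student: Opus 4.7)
The proof centers on the equivalence (i)$\Leftrightarrow$(ii); the implications involving (iii) and (iv) are geometric reformulations of the same underlying fact.

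For (i)$\Rightarrow$(ii), I would begin by using Corollary~\ref{cor:pos-sp-u2_4} to write $P = P_1 \oplus \cdots \oplus P_t$ as a direct sum of 2-connected series-parallel positroids on non-crossing ground-sets. Proposition~\ref{prop:direct-sum-class-decomp} reduces the problem to showing $\Omega_{P_i} = \{P_i\}$ for each $i$. Fix such an $i$ and let $M \in \Omega_{P_i}$. By Lemma~\ref{lem:dec-perm-graphic}, $\pi^:_{P_i}$ is a single cycle, so $\pi^:_M = \pi^:_{P_i}$ is as well, and Proposition~\ref{prop:class-2connect} forces $M$ to be 2-connected. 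Corollary~\ref{cor:pos-env-weak-map} provides a rank-preserving weak map $\mathbb{1}: P_i \to M$, and since $P_i$ is binary and $M$ is 2-connected, Theorem~\ref{thm:class-binary-nontrivial} forces this weak map to be trivial, i.e., $M = P_i$.

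For (ii)$\Rightarrow$(i), I would argue the contrapositive. If $P$ is not graphic, then by Corollary~\ref{cor:pos-sp-u2_4} it is not binary, so by Theorem~\ref{forbid:binary} it has a $U^2_4$-minor. Using Proposition~\ref{prop:direct-sum-class-decomp} and the positroid tree decomposition of Theorem~\ref{pos-canon-tree-iff}, we may reduce to producing a distinct element of $\Omega_V$ for a 3-connected positroid vertex $V$ of the canonical tree decomposition of $P$ that is neither a circuit nor a cocircuit; the compatibility of decorated permutations with 2-sums (Lemma~\ref{dec-perm-2sum}) then lifts any such non-uniqueness at $V$ to non-uniqueness at $P$. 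Such $V$ admits a minor realization $(V/X)\setminus Y \cong U^2_4$ on a cyclically ordered $\{a,b,c,d\}$, and I would construct the desired $M \in \Omega_V \setminus \{V\}$ by removing from $\mathcal{B}(V)$ the ``diagonal'' basis $X \cup \{a,c\}$ coming from a non-Grassmann-necklace basis of the $U^2_4$-minor. The main obstacle is verifying that $X \cup \{a,c\}$ is not itself a Grassmann-necklace basis of $V$ and that after its removal it becomes a circuit-hyperplane of the resulting family, so that the family is again a matroid with $\mathcal{J}(M) = \mathcal{J}(V)$; the 3-connectivity of $V$ and the non-binary structure of the minor are precisely what guarantee the ``room'' needed around $X \cup \{a,c\}$ for basis exchange to continue to hold.

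The remaining equivalences are translations to geometry. The implication (ii)$\Rightarrow$(iii) is immediate from the decomposition $\Pi^{\circ}_{\mathbb{F}}(P) = \bigsqcup_{M \in \Omega_P} S_M(\mathbb{F})$; (iii)$\Rightarrow$(ii) follows by verifying that every $M \in \Omega_P$ is realizable over some field (assembling realizations through the 2-sum structure), so that (iii) forces $|\Omega_P|=1$. For (i)$\Rightarrow$(iv), combining (i)$\Rightarrow$(ii)$\Rightarrow$(iii) with Corollary~\ref{cor:pos-variety} yields $\Pi_{\mathbb{R}}(P) = \overline{S_P(\mathbb{R})}$; Theorem~\ref{thm:class-binary} then ensures each matroid stratum in this closure corresponds to a binary matroid, $\mathbb{R}$-realizability upgrades binary to regular, and an analysis of the positroid envelope of each such stratum combined with Corollary~\ref{cor:pos-sp-u2_4} and the already-proven direction (i)$\Rightarrow$(ii) shows that every such envelope is graphic, hence each stratum corresponds to a graphic matroid. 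Finally, (iv)$\Rightarrow$(i) is immediate because $S_P(\mathbb{R}) \subseteq \Pi_\mathbb{R}(P)$ forces $P$ itself to be graphic.
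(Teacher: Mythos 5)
Your argument for (i)$\Rightarrow$(ii) is exactly the paper's: decompose into $2$-connected graphic summands via Theorem~\ref{thm:pos-direct-sum} and Proposition~\ref{prop:direct-sum-class-decomp}, use Lemma~\ref{lem:dec-perm-graphic} and Proposition~\ref{prop:class-2connect} to get $2$-connectedness of every member of each $\Omega_{P_i}$, and finish with Theorem~\ref{thm:class-binary-nontrivial}. That part is correct.

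The direction (ii)$\Rightarrow$(i) is where you have a genuine gap, and it is an unnecessary one. You propose a contrapositive construction: find a $U^2_4$-minor inside a $3$-connected vertex $V$ of the canonical tree decomposition and delete a ``diagonal'' basis $X\cup\{a,c\}$ from $\mathcal{B}(V)$ to manufacture a second member of $\Omega_V$. You yourself flag the ``main obstacle'': removing a single basis from a matroid almost never yields a matroid (it does only when that basis is a circuit--hyperplane of the resulting set system), and nothing you say establishes that the lifted basis $X\cup\{a,c\}$ has this property in $V$, nor that it avoids every term of the Grassmann necklace. As written this direction is not a proof. The paper's argument is one line: Theorem~\ref{thm:graph-build} shows \emph{every} positroid envelope class contains a graphic matroid, so if $\Omega_P=\{P\}$ then $P$ is itself graphic. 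You proved Theorem~\ref{thm:graph-build} earlier in the paper precisely for this purpose; use it.

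A second, related gap is in (iii)$\Rightarrow$(ii): you assert that ``every $M\in\Omega_P$ is realizable over some field (assembling realizations through the $2$-sum structure).'' That is false in general --- an envelope class is the fiber of the Grassmann-necklace map over all ordered matroids, realizable or not, and there is no $2$-sum structure available on an arbitrary member of the class. What you actually need is much weaker and again comes from Theorem~\ref{thm:graph-build}: $\Omega_P$ contains a graphic, hence $\mathbb{F}_2$-realizable, matroid, so $\Pi^{\circ}_{\mathbb{F}_2}(P)\neq\emptyset$; then (iii) forces $S_P(\mathbb{F}_2)\neq\emptyset$, so $P$ is binary, hence graphic by Corollary~\ref{cor:pos-sp-u2_4}, and (i)$\Rightarrow$(ii) closes the loop. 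Your treatment of (iii)$\Leftrightarrow$(iv) follows the paper's outline, but note that to conclude the envelope $Q$ of a stratum $M_V\subseteq\Pi_{\mathbb{R}}(P)$ is graphic you must run Theorem~\ref{thm:class-binary} along the weak map $\mathbb{1}:P\to Q$ (obtained by comparing Grassmann necklaces via Theorem~\ref{thm:pos-intersect}), not along $\mathbb{1}:Q\to M_V$, which points the wrong way.
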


\begin{proof} We break the proof up by items.
\begin{itemize}
    \item[$(i)\Leftrightarrow(ii)$] Suppose that $P$ is a graphic positroid on a densely totally ordered set {$E$}. Then by Theorem~\ref{thm:pos-direct-sum}, $P$ is the direct sum of a collection of $2$-connected graphic positroids, $P = \bigoplus^n_{i=1} P_i$, 
    {whose ground-sets form a non-crossing partition} of {$E$}. By Proposition~\ref{prop:direct-sum-class-decomp}, this induces a direct sum decomposition of the positroid envelope class of $P$, $\Omega_P = \bigoplus^n_{i=1} \Omega_{P_i}$. Thus, $|\Omega_P| = \Pi^n_{i=1} |\Omega_{P_i}|$. It is then sufficient to show that for all $i \in [n]$, $P_i$ is the unique matroid contained in $\Omega_{P_i}$.

    By Lemma~\ref{lem:dec-perm-graphic}, for all $i \in [n]$, $\pi^:_{P_i}$ consists of a single cycle, hence by Proposition~\ref{prop:class-2connect}, all matroids contained in $\Omega_{P_i}$ are $2$-connected. Let $M \in \Omega_{P_i}$. As $P_i$ is binary, {then}
    by Theorem~\ref{thm:class-binary-nontrivial}{,}
    $M = P_i$. Thus, $\Omega_{P_i} = P_i$. It follows that $P$ is the unique matroid contained in $\Omega_P$.

    Now, {consider the case where}
    $P$ is the unique matroid contained in $\Omega_P$. Then by Theorem~\ref{thm:graph-build}, $P$ is graphic.

    \item[$(ii)\Leftrightarrow(iii)$] Let $P$ be a positroid of rank-$k$ on $n$ elements. Suppose that $P$ is the unique matroid contained in $\Omega_P$, hence {$P$ is graphic and therefore regular. So} for any field $\mathbb{F}$, we have
    \begin{align*}
        \Pi^{\circ}_{\mathbb{F}}(P) &= \{V \in \Gr(k,\mathbb{F}^n) : M_V \in \Omega_P\}\\
        &= \{V \in \Gr(k,\mathbb{F}^n) : M_V = P\} = S_P(\mathbb{F}).
    \end{align*}
    {Assume} instead that for every field $\mathbb{F}$, $\Pi^{\circ}_{\mathbb{F}}(P) = S_P(\mathbb{F})$. In particular {$\Pi^{\circ}_{\mathbb{F}_2}(P) = S_P(\mathbb{F}_2)$, so $P$ is binary}. By {Corollary}~\ref{cor:pos-sp-u2_4}, $P$ is graphic and therefore the unique matroid contained in $\Omega_P$.

    \item[$(iii)\Leftrightarrow(iv)$] Let $P$ be a positroid of rank-$k$ on {$[n]$}. Suppose that for every field $\mathbb{F}$, $\Pi^{\circ}_{\mathbb{F}}(P) = S_P(\mathbb{F})$. Then, in particular, $\Pi^{\circ}_{\mathbb{R}} = S_P(\mathbb{R})$. By Corollary~\ref{cor:pos-variety},
    \[
        \Pi_{\mathbb{R}}(P) = \{ V \in \Gr(k,\mathbb{R}^n) : I \notin \mathcal{I}(P) \Rightarrow \Delta_I(V) = 0\}.
    \]
    Observe that $\Pi_{\mathbb{R}}(P)$ can be equivalently defined as
    \[
        \Pi_{\mathbb{R}}(P) = \bigsqcup_{\mathbb{1} : P \xrightarrow{\text{rp}} M_V} S_{M_V},
    \]
    where the disjoint union is taken over all points $V \in \Gr(k,\mathbb{R}^n)$ such that $\mathbb{1} : P \to M_V$ is a rank-preserving weak map. Let $V \in \Gr(k,\mathbb{R}^n)$ be fixed, such that $\mathbb{1} : P \to M_V$ is a rank-preserving weak map. Let {$\mathcal{J}(M_V) = (J_1,J_2,\ldots,J_n)$}, and let $Q$ be the positroid envelope of $M_V$. Then, $\mathcal{J}(Q) = \mathcal{J}(M_V)$. Let {$\mathcal{J}(P) = (J'_1,J'_2,\dots,J'_n)$}, then for all $i \in [n]$, {$J'_i \leq_i J_i$}. By Theorem~\ref{thm:pos-intersect}, we obtain
    \begin{align*}
        \mathcal{B}(Q) &= {\bigcap^n_{i=1} \left\{ B \in {[n] \choose k} : J_i \leq_i B \right\}}\\
        &\subseteq {\bigcap^n_{k=1} \left\{ B \in {[n] \choose k} : J'_i \leq_i B \right\}} = \mathcal{B}(P). 
    \end{align*}
    Therefore, $\mathbb{1} : P \to Q$ is a rank-preserving weak map. As $P$ is the unique matroid contained in $\Omega_P$, {then $P$ is graphic} and therefore binary.
    By Theorem~\ref{thm:class-binary} {and Corollary}~\ref{cor:pos-sp-u2_4}, $Q$ is binary, hence graphic and therefore $M_V = Q$. It follows that $\Pi_{\mathbb{R}}(P)$ is the disjoint union of graphic matroid strata.

    Suppose instead that $\Pi_{\mathbb{R}}(P)$ is the disjoint union of graphic matroid strata. Then, $S_P(\mathbb{F})$ is a graphic matroid stratum, hence $P$ is a graphic and therefore the unique matroid contained in $\Omega_P$. It follows that for every field $\mathbb{F}$, $\Pi^{\circ}_{\mathbb{F}}(P) = S_P(\mathbb{F})$.
\end{itemize}
\end{proof}

\section{Incidence matrices}\label{sec:incidence}

In this section, we strengthen the connection between the graphic positroids and the series-parallel graphs, by showing that every graphic positroid $P$ can be realized by a matrix with all nonnegative maximal minors that is the signed incidence matrix of a graph that represents $P$. Let $A$ be a full rank matrix and let $J$ be a length $r(A)$ sequence of distinct column vectors in $A$, then we denote by $\Delta_J(A)$ the maximal minor of $A$ given by $J$.

\begin{obs} \label{obs:pos-mat-rot}
    Let $B$ be full rank, $m \times n$ matrix with column vectors $(c_1,c_2,\ldots,c_n)$, where $n \geq m$. Suppose that $B$ has all nonnegative maximal minors. Then the full rank, $m \times n$ matrix $B'$, with column vectors 
    \begin{equation*}
        (c_{i+1},\ldots,c_n,(-1)^{m-1}c_1,\ldots,(-1)^{m-1}c_i),
    \end{equation*}
    has all nonnegative maximal minors.
\end{obs}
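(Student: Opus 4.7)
\medskip

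\noindent\textbf{Proof proposal.} The plan is to reduce to the case of a single cyclic shift (i.e.\ $i=1$) and then iterate. Define the shift operator $T$ by
\[
    T(d_1,d_2,\ldots,d_n) = (d_2,d_3,\ldots,d_n,(-1)^{m-1}d_1).
\]
A direct check shows that $T^i(B) = (c_{i+1},\ldots,c_n,(-1)^{m-1}c_1,\ldots,(-1)^{m-1}c_i)$, using the fact that $((-1)^{m-1})^2 = 1$ so subsequent applications of $T$ leave the already-shifted columns unchanged in sign. Hence if $T$ preserves nonnegativity of all maximal minors, then so does $T^i$ for every $i$, which gives the observation.

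It therefore suffices to prove the $i=1$ case. Let $B' = T(B)$ and consider an arbitrary maximal minor $\Delta_J(B')$ indexed by $m$ distinct columns of $B'$. The argument splits into two cases. If $J$ does not include the last column $(-1)^{m-1}c_1$ of $B'$, then $J$ picks out $m$ columns from $\{c_2,\ldots,c_n\}$ in the same order as in $B$, so $\Delta_J(B') = \Delta_J(B) \geq 0$ by hypothesis. If $J$ does include the last column of $B'$, say $J = (c_{j_1},\ldots,c_{j_{m-1}},(-1)^{m-1}c_1)$ with $2 \leq j_1 < \cdots < j_{m-1} \leq n$, then pulling out the scalar factor and then cyclically moving $c_1$ past $m-1$ columns to bring it to the front gives
\[
    \Delta_J(B') = (-1)^{m-1}\det\bigl[c_{j_1}\mid\cdots\mid c_{j_{m-1}}\mid c_1\bigr] = (-1)^{m-1}\cdot(-1)^{m-1}\det\bigl[c_1\mid c_{j_1}\mid\cdots\mid c_{j_{m-1}}\bigr],
\]
which equals a maximal minor of $B$ and is therefore nonnegative.

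The only real content is this sign bookkeeping: the factor of $(-1)^{m-1}$ on the shifted column is precisely what is needed to cancel the sign of a cyclic permutation of $m$ columns, and that is exactly why this is the correct formulation of ``rotating'' a totally nonnegative matrix. Once the $i=1$ step is in hand, the induction on $i$ is immediate, and there is no further obstacle.
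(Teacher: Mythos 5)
Your proposal is correct and follows essentially the same route as the paper: reduce to the single-shift case $i=1$, split maximal minors according to whether they use the wrapped-around column, and observe that the scalar $(-1)^{m-1}$ cancels the sign of cyclically permuting that column past $m-1$ others. The explicit shift operator $T$ and the remark on iterating it are just a slightly more verbose account of the paper's one-line reduction to $i=1$.
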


\begin{proof}
    It is sufficient to show the result for $i = 1$. Let $J$ be a length $m$ sequence of distinct column vectors in $B'$. Suppose that $J \subseteq \{c_2,c_3,\ldots,c_n\}$, then $J$ is also a sequence of distinct column vectors in $B$ and $\Delta_J(B') = \Delta_J(B) \geq 0$. Suppose instead that 
    \[J = (c_{j_1},\ldots,c_{j_{m-1}},(-1)^{m-1}c_1)\] for some $\{c_{j_t}\}^{m-1}_{t=1} \subseteq \{c_2,c_3,\ldots,c_n\}$. Take $J' = (c_1,c_{j_1},\ldots,c_{j_{m-1}})$. Then $\Delta_J(B') = (-1)^{m-1} \cdot (-1)^{m-1}\Delta_{J'}(B) = \Delta_{J'}(B) \geq 0$.
\end{proof}

\begin{thm}\label{prop:spgraph-nonneg}
Let $P$ be a graphic positroid of rank $r$. Then there exists a graph $G$ and matrix $A$, so that $M(G)=M(A)=P$, $A$ has all nonnegative maximal minors and $A$ is obtained from a signed incidence matrix of $G$ by deleting $|G|-r$ rows.
\end{thm}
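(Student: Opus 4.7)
The plan is to reduce to $2$-connected $P$ via direct sums, and then induct on the canonical tree decomposition of a $2$-connected graphic positroid, with circuits and cocircuits as the base cases and the $2$-sum of non-crossing positroids as the inductive step.

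By Theorem~\ref{thm:pos-direct-sum}, $P$ decomposes as $P_1 \oplus \cdots \oplus P_k$ with each $P_i$ a $2$-connected graphic positroid on a non-crossing subset. If each $P_i$ admits a realization $(G_i, A_i)$ of the desired form, then the disjoint union $G = G_1 \sqcup \cdots \sqcup G_k$ and the block-diagonal matrix $A = \operatorname{diag}(A_1, \ldots, A_k)$ realize $P$: the matrix $A$ is obtained from the signed incidence matrix of $G$ by deleting exactly $|G|-r$ rows (one per component), and its maximal minors factor as products of maximal minors of the $A_i$'s, hence are nonnegative. So we may assume $P$ is $2$-connected.

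By Corollary~\ref{cor:pos-sp-u2_4} the $2$-connected graphic positroid $P$ is series-parallel, and so by Proposition~\ref{2sum:pos-canon-tree} its canonical tree decomposition $T$ has every vertex a positroid on a non-crossing subset that is either a circuit or a cocircuit (the series-parallel condition forbids $U^2_4$ and $M(K_4)$, which are the obstructions to a $3$-connected label on more than three elements). I induct on $|V(T)|$. For the base case $|V(T)| = 1$, $P$ is a single circuit $U^{n-1}_n$ or cocircuit $U^1_n$: the former is realized by the cycle $C_n$ and the latter by the graph with $n$ parallel edges between two vertices, and in each case a short direct computation produces an orientation and a choice of row-deletion so that the resulting signed-incidence submatrix has all maximal minors equal to $+1$. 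For the inductive step with $|V(T)| \geq 2$, pick a leaf $Q$ of $T$ attached along an edge $e$, write $P = P' \oplus_2 Q$ for $P'$ the $2$-connected positroid from the rest of $T$, and apply the inductive hypothesis to obtain $(G_{P'}, A_{P'})$ and $(G_Q, A_Q)$. Form $G = G_{P'} \oplus_2 G_Q$ (the graph $2$-sum: identify the two pairs of endpoints of the copies of $e$ in $G_{P'}$ and $G_Q$, then delete $e$), which has $M(G) = P$. Build $A$ by gluing $A_{P'}$ and $A_Q$ along the vertex rows corresponding to the identified vertex pairs, in the same block-triangular spirit as the construction in the proof of Proposition~\ref{2sum:pos-construct}, and delete one further vertex row to return to full rank. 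A Laplace expansion along the bridging rows then shows each maximal minor of $A$ is a product (with sign) of a maximal minor of $A_{P'}$ and one of $A_Q$, hence nonnegative.

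The main obstacle is the inductive step: one must choose the orientations of $e$ in $G_{P'}$ and $G_Q$ to agree (which may require reversing the orientation on one side, equivalent to negating the corresponding row and preserving the signed-incidence form) and choose the vertex row to delete so that the glued matrix is genuinely a restricted signed incidence matrix of $G$ and the sign expansion of each maximal minor comes out nonnegative. The non-crossing hypothesis on $E(P')$ and $E(Q)$, via Observation~\ref{crossing-subsets}, pins down a compatible cyclic order of columns across the two sides and is precisely what makes this sign bookkeeping work.
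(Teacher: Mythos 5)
Your route is genuinely different from the paper's: the paper never forms a $2$-sum of matrices, but instead builds the series-parallel graph one edge at a time ($K_2$ followed by a sequence of series and parallel extensions) and tracks the effect of each single extension on the row-deleted incidence matrix (a parallel extension duplicates a column; a series extension first cyclically rotates the columns with signs via Observation~\ref{obs:pos-mat-rot}, then adds one row and splits one column). That design makes the sign bookkeeping essentially trivial at each step, which is precisely the part your plan defers. The first concrete gap is in your reduction to the $2$-connected case: the claim that the maximal minors of $\operatorname{diag}(A_1,\dots,A_k)$ ``factor as products of maximal minors of the $A_i$'s, hence are nonnegative'' is false as stated, because a non-crossing partition need not be a partition into intervals, and the columns of $A$ must appear in the order of $E$. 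For $U^1_2\oplus U^1_2$ on the non-crossing blocks $\{1,4\}$ and $\{2,3\}$ (two pairs of parallel edges), the row-deleted incidence matrix $\left(\begin{smallmatrix}1&0&0&1\\0&1&1&0\end{smallmatrix}\right)$ has $\Delta_{\{2,4\}}=-1$: each nonzero maximal minor is a product of block minors \emph{times the sign of the shuffle permutation} interleaving the blocks. This is repairable --- every non-crossing partition has a block that is a cyclic interval, so one can insert components one at a time, conjugating by the signed rotation of Observation~\ref{obs:pos-mat-rot} (equivalently, reversing the orientations of suitable edges, which preserves the signed-incidence form; in the example, negate column $4$) --- but the one-line justification you give does not prove it.

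The second gap is the inductive step itself, which you correctly identify as ``the main obstacle'' and then do not carry out. This is not a minor omission: showing that the glued matrix is simultaneously (a) a row-deleted signed incidence matrix of $G_{P'}\oplus_2 G_Q$ and (b) totally nonnegative is the entire content of the theorem in your approach. In particular, you cannot simply cite the construction in Proposition~\ref{2sum:pos-construct}, because that argument begins by performing row operations to normalize the column of the gluing element $e$ to a standard basis vector, and such row operations generally destroy the incidence-matrix structure you need to preserve. A correct version must work directly with the two shared vertex rows, choose which of them to delete, fix compatible orientations of $e$ on the two sides, and expand each maximal minor along the single surviving shared row, matching the two resulting terms with minors of $A_{P'}$ and $A_Q$ that respectively contain or omit the column of $e$; the non-crossing hypothesis then controls the shuffle signs as in the first gap. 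None of this is impossible, but as written the proposal asserts the conclusion of the sign analysis rather than performing it, so it does not yet constitute a proof.
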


\begin{proof}
It follows from Theorem~\ref{thm:pos-graphic} that $P$ can be represented by a graph $G$ that is the disjoint union of a set of series-parallel graphs. We show that there exists a full rank matrix $A$, obtained from a signed incidence matrix of $G$ by deleting rows, such that $M(A) = M(G)$ and $A$ has all nonnegative maximal minors. We begin by showing that the result holds for any series-parallel network. Any series-parallel network can be obtained from $C_1$, the cycle graph on a single vertex, or $K_2$, the complete graph on two vertices, by a sequence of series-parallel extensions. Let $G$ be a series-parallel network, and suppose that $G$ can be obtained from $K_2$ by a sequence of series-parallel extensions. Let $K_2 = G_1, G_2, \ldots, G_n = G$ be a sequence of graphs such that for all $1 \leq i < n$, $G_{i+1}$ is obtained from $G_i$ by a series or parallel extension of some edge $e \in E(G_i)$. We inductively construct a sequence of matrices, $A_{G_1}, A_{G_1},\ldots,A_{G_n}$, where {for all} $i \in [n]$, $A_{G_i}$ is a signed incidence matrix for $G_i$. Furthermore, {for all} $i \in [n]$, by deleting the second row we obtain a matrix $A_i$ that has all nonnegative maximal minors. For $G_1 = K_2$, the matrix
    \begin{equation*}
        A_{G_1} =
        \left(\begin{array}{c}
            1\\
            -1
        \end{array}\right)
    \end{equation*}
    is a signed incidence matrix for $K_2$. Let $A_1$ be the matrix obtained from $A_{G_1}$ by deleting the second row. Then $A_1 = (1)$, so it has it exactly one maximal minor, $\det(A) = 1 > 0$.

    Now consider the graph $G_{i+1}$ that is obtained from the graph $G_i$ by a series or parallel extension about some specified edge $e \in E(G_i)$. Let us denote the column vectors of $A_{G_i}$, ordered from left to right, as the sequence $(c_1,c_2,\ldots,c_i)$, and let $c_e$ be the column vector associated with the edge $e \in E(G_i)$. Suppose that $G_{i+1}$ is obtained from $G_i$ by a parallel extension, that is $G_{i+1}$ is obtained from $G_i$ by adding an edge in parallel to $e$ in $G_i$. Suppose further that $A_i$, the matrix obtained from $A_{G_i}$ by deleting the second row, has all nonnegative maximal minors. We take $A_{G_{i+1}}$ to be the matrix whose column vectors, ordered from left to right, is the sequence $(c_1,c_2,\ldots,c_e,c_e,c_{e+1},\ldots,c_i)$. Then $A_{G_{i+1}}$ is a signed incidence matrix of $G_{i+1}$. Furthermore, the set of non-zero maximal minors of $A_{i+1}$, the matrix obtained from $A_{G_{i+1}}$ by deleting the second row, is equal to the set of non-zero maximal minors of $A_i$. Therefore, all maximal minors of $A_{i+1}$ are nonnegative. 

    Suppose instead that $G_{i+1}$ is obtained from $G_i$ by a series extension, that is $G_{i+1}$ is obtained from $G_i$ by adding an edge in series to $e$ in $G_i$. Suppose further that $A_i$, the matrix obtained from $A_{G_i}$ by deleting the second row, has all nonnegative maximal minors. Let the column vector sequence of $A_{G_i}$, ordered from left to right, be given by $(c_1,c_2,\ldots,c_i)$ and let $c_e$ be the column vector corresponding to the edge $e \in E(G_i)$. The matrix $A'_{G_{i+1}}$ given by the sequence of column vectors
    \begin{equation*}
        (c_{e+1},c_{e+2},\ldots,c_i,(-1)^{r(A_i)-1}c_1,(-1)^{r(A_i)-1}c_2,\ldots,(-1)^{r(A_i)-1}c_e)
    \end{equation*}
    is an incidence matrix of $G_i$. Furthermore, by Observation~\ref{obs:pos-mat-rot}, the matrix $A'_i$, obtained from $A'_{G_i}$ by deleting the second row, has all nonnegative maximal minors. We construct a column vector $c'_e$, where $\lvert c'_e \rvert = \lvert c_e \rvert$, with entries given by
    \begin{equation*}
        c'_e[j] :=
        \begin{cases}
            -1, & \text{if } (-1)^{r(A_i)-1}c_e[j] = 1\\
            0, & \text{otherwise.}
        \end{cases}
    \end{equation*}
    Then the matrix
    \begin{equation*}
        A_{G_{i+1}} =
        \left(\begin{array}{@{}c|c|c@{}}
            A'_{G_i} \setminus (-1)^{r(A_i)-1}c_e & (-1)^{r(A_i)-1}c_e + c'_e & c'_e\\ \hline
            \begin{matrix}
                0 \cdots 0
            \end{matrix} &
            1 & 1
        \end{array}\right)
    \end{equation*}
    is a signed incidence matrix for $G_{i+1}$. Furthermore, the matrix $A'_{i+1}$, obtained from $A_{G_{i+1}}$ by deleting the second row, has all nonnegative maximal minors.
\end{proof}

\section*{Acknowledgments}
We thank Allen Knutson, for first posing the question of whether every positroid envelope class contains a graphic matroid, and for many helpful discussions. We also thank Melissa Sherman-Bennett for helpful comments. We are grateful to three anonymous reviewers for helpful comments which significantly improved the quality of this paper.

\bibliographystyle{plain}
\bibliography{references.bib}

\end{document}